\documentclass[a4paper,12pt]{article} 
 
\usepackage{graphicx}

\usepackage{amssymb,amsmath,amsthm}

\newtheorem{theorem}{Theorem}[section]

\newtheorem{corollary}{Corollary}[section]
\newtheorem{proposition}{Proposition}[section]

\theoremstyle{definition}

\newtheorem{example}{Example}[section]
\newtheorem{remark}{Remark}[section]

\DeclareMathOperator{\cone}{cone}
\DeclareMathOperator{\conv}{conv}
\DeclareMathOperator{\dist}{dist}
\DeclareMathOperator{\interior}{int}
\DeclareMathOperator{\ri}{ri}

\newcommand{\R}{\mathbb{R}}

\begin{document}
	
\title{On the Structure of Higher Order Voronoi Cells}


\author{Juan Enrique Mart\'{\i}nez-Legaz\thanks{Departament d'Economia i d'Hist\`{o}ria Econ\`{o}mica, Universitat Aut\`{o}%
		noma de Barcelona, and BGSMath, Spain,
		JuanEnrique.Martinez.Legaz@uab.cat}, 
		Vera~Roshchina\thanks{School of Mathematics and Statistics, UNSW Sydney, Australia, 	v.roshchina@unsw.edu.au}, 
		Maxim Todorov\thanks{	Department of Physics and Mathematics, UDLAP, Puebla, Mexico. On leave from
				Institute of Mathematics and Informatics, BAS, Sofia, Bulgaria, 
				maxim.todorov@udlap.mx}}


\maketitle

\begin{abstract}
The classic Voronoi cells can be generalized to a higher-order version by
considering the cells of points for which a given $k$-element subset of the
set of sites consists of the $k$ closest sites. We study the structure of
the $k$-order Voronoi cells and illustrate our theoretical findings with a
case study of two-dimensional higher-order Voronoi cells for four points.
\end{abstract}

\section{Introduction}

The classic Voronoi cells partition the Euclidean space into polyhedral
regions that consist of points nearest to one of the sites from a given finite
set. We consider higher order (or multipoint) Voronoi cells that correspond to
the subsets of points nearest to $k$ several sites (see an illustration in
Fig.~\ref{fig:six-points-voronoi}). \begin{figure}[th]
\centering
\includegraphics[width=0.8\textwidth]{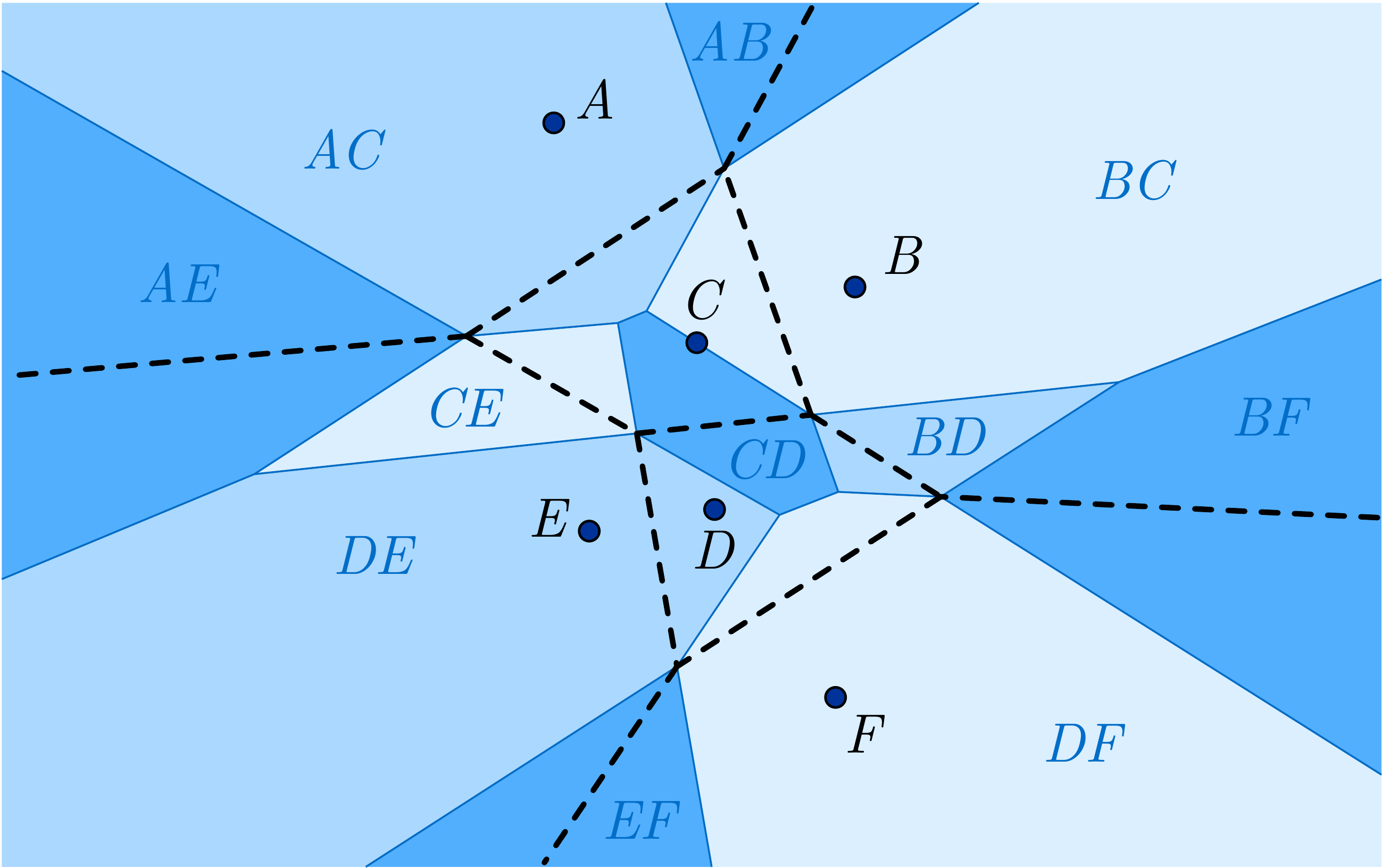}\caption{The
classic and order two Voronoi diagrams on six sites (shown in dashed lines and
shaded regions, respectively). Notice that some two-point combinations
generate empty cells.}%
\label{fig:six-points-voronoi}%
\end{figure}

To our best knowledge, the earliest mention of $k$-point Voronoi cells appears
in \cite{SH75}, where a tessellation of the plane by such cells was called the
\emph{Voronoi diagram of order $k$}; that paper also provides bounds on the
number of nonempty cells in a plane and complexity estimates for the
construction of such diagrams; in \cite{Papadopoulou} the complexity of
constructing the higher-order diagrams for line segments was studied.

The multipoint or $k$-order Voronoi diagrams discussed in this paper are one
possible way to generalize the classic construction. Some notable
generalizations are the cells of more general sets \cite{Papadopoulou,inverse}%
, the use of non-Euclidean metrics \cite{lina,voronoicity,voronoimetrics} and
the abstract cells that are defined via manifold partitions of the space
rather than distance relations \cite{klein}.

Much of the recent work mentioned above is focussed on the algorithmic
complexity of constructing planar Voronoi diagrams of various types. In this
paper we rather focus on the \emph{structure} of multipoint Voronoi cells, and
in particular obtain constructive characterizations of cells with nonempty
interior, also of bounded and empty cells. We use these results for a case
study of multipoint cells defined on at most four sites. We prove
that---perhaps counterintuitively---some convex polygons, including triangles
and cyclic quadrilaterals, can not be such cells, and provide explicit
algorithms for the construction of sites for a given cell in other cases.

Finally, we would like to mention a wealth of emerging application of higher
order Voronoi cells, predominantly driven by the recent advancements in big
data and mobile sensor technology. For instance, in \cite{gradientflow} such
cells are utilized in a numerical technique for smoothing point clouds from
experimental data; in \cite{hole} $k$-order cells are used for detecting and
rectifying coverage problems in wireless sensor networks; in \cite{voting} the
$k$-order diagrams are used to analyze coalitions in the US supreme court
voting decisions. A well-known application of the higher order Voronoi cells
is in a $k$-nearest neighbor problem in spatial networks \cite{spatial},
however, the practical implementations are limited due to the complexity of
higher order diagrams and the lack of readily available software.

Our work is organized as follows. In Section~\ref{sec:theory} we study the
structure of higher order Voronoi cells, and in particular prove the
conditions for the cell to be bounded and nonempty. In
Section~\ref{sec:specialcases} we study the special case of higher order cells
on no more than four sites. We will refer to the higher order cells as
multipoint cells, to highlight the discrete nature of our construction.

\section{High Order Voronoi Cells in $\R^{n}$}

\label{sec:theory}

Let $T\subset\R^{n}$ be finite and nonempty. For a nonempty proper
subset $S\subset T$ we define the multipoint Voronoi cell as the set of points
that are not farther from each point of $S$ than from each point of
$T\setminus S$,
\[
V_{T}(S):=\left\{  x\in\R^{n}\,:\,\max_{s\in S}\dist(s,x)\leq
\min_{t\in T\setminus S}\dist(t,x)\right\}  ,
\]
where $\dist(x,y)$ is the Euclidean distance function. When $S$ is a
singleton, i.e. $S=\{s\}$, the set $V_{T}(S)$ is a classic Voronoi cell. We
abuse the notation slightly and write $V_{T}(s):=V_{T}(\{s\})$. It is evident
that
\begin{equation}
V_{T}(S)=\bigcap_{s\in S}V_{\{s\}\cup\left(  T\setminus S\right)
}(s).\label{eq:evidentprop}%
\end{equation}

It is not difficult to observe that each multipoint Voronoi cell is a convex
polyhedron, i.e. the intersection of finitely many closed halfspaces, since
each cell is defined by finitely many linear inequalities. Explicitly, we have
the following representation.

\begin{proposition}
\label{prop:inequalities}Let $T$ be a finite subset of $\R^{n}$, and
let $S$ be a nonempty and proper subset of $T$. Then $V_{T}\left(  S\right)  $ is
the intersection of $|S|(|T|-|S|)$ closed halfspaces:%
\begin{equation}
V_{T}(S)=\bigcap_{\substack{s\in S \\t\in T\setminus S}}\left\{
x\in\R^{n}\,:\,\langle t-s,x\rangle\leq\frac{1}{2}\left(  \Vert
t\Vert^{2}-\Vert s\Vert^{2}\right)  \right\}  .\label{eq:linrep}%
\end{equation}

\end{proposition}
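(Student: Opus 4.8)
The plan is to directly translate the distance-based definition into linear inequalities. The key observation is that for any two points $p, q \in \R^n$, the inequality $\dist(p,x) \leq \dist(q,x)$ between Euclidean distances is equivalent to $\|p - x\|^2 \leq \|q - x\|^2$, since the square root is monotone on nonnegative reals. Expanding both squared norms and cancelling the common term $\|x\|^2$ then yields a single linear inequality in $x$.

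Let me verify the algebra I expect to produce. Starting from $\|s - x\|^2 \leq \|t - x\|^2$, I expand to $\|s\|^2 - 2\langle s, x\rangle + \|x\|^2 \leq \|t\|^2 - 2\langle t, x\rangle + \|x\|^2$. The $\|x\|^2$ terms cancel, leaving $\|s\|^2 - 2\langle s, x\rangle \leq \|t\|^2 - 2\langle t, x\rangle$, which rearranges to $2\langle t - s, x\rangle \leq \|t\|^2 - \|s\|^2$, i.e. the halfspace inequality $\langle t - s, x\rangle \leq \tfrac{1}{2}(\|t\|^2 - \|s\|^2)$ appearing in~\eqref{eq:linrep}.

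The main structural step is to unpack the two nested extrema in the definition of $V_T(S)$. The condition $\max_{s \in S}\dist(s,x) \leq \min_{t \in T\setminus S}\dist(t,x)$ holds if and only if $\dist(s,x) \leq \dist(t,x)$ simultaneously for every pair $s \in S$ and $t \in T \setminus S$; this is just the defining property of max and min over finite sets. Each such pair contributes exactly one halfspace via the equivalence established above, and intersecting over all $|S|(|T|-|S|)$ pairs produces the right-hand side of~\eqref{eq:linrep}. This also accounts for the stated count, since there are $|S|$ choices for $s$ and $|T|-|S|$ choices for $t$. As an alternative route, one could instead start from the evident product formula~\eqref{eq:evidentprop}, apply the singleton case to each factor $V_{\{s\}\cup(T\setminus S)}(s)$, and take the intersection; this gives the same family of halfspaces.

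I do not anticipate a genuine obstacle here, as the result is essentially a bookkeeping exercise once the squared-distance trick is in place. The only point requiring mild care is ensuring the equivalence $\dist(s,x) \leq \dist(t,x) \iff \|s-x\|^2 \leq \|t-x\|^2$ is stated as a genuine biconditional, so that no points are spuriously added to or removed from the cell; since both distances are nonnegative and squaring is strictly monotone on $[0,\infty)$, this is immediate. I would present the argument by fixing an arbitrary $x \in \R^n$ and showing membership in the left-hand set is equivalent to membership in the right-hand intersection, thereby establishing set equality.
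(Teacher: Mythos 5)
Your proposal is correct and follows essentially the same route as the paper: unpack the $\max$/$\min$ condition into the pairwise inequalities $\dist(x,s)\leq\dist(x,t)$ and convert each into a halfspace by squaring and cancelling $\Vert x\Vert^{2}$. No substantive differences to report.
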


\begin{proof}
Observe that, from the definition,%
\begin{align*}
V_{T}(S)& =\left\{ x\in \R^{n}\,:\,\max_{s\in S}\dist(x,s)\leq \min_{t\in
T\setminus S}\dist(x,t)\right\} \\
& =\bigcap_{s\in S}\left\{ x\in \R^{n}\,:\,\dist(x,s)\leq \min_{t\in
T\setminus S}\dist(x,t)\right\} \\
& =\bigcap_{s\in S}\bigcap_{t\in T\setminus S}\left\{ x\in \R^{n}\,:\,\dist%
(x,s)\leq \dist(x,t)\right\} .
\end{align*}%
Explicitly for the Euclidean distance function we have
\begin{equation}
\dist(x,s)\leq \dist(x,t)\quad \Leftrightarrow \quad \Vert x\Vert
^{2}-2\langle s,x\rangle +\Vert s\Vert ^{2}\leq \Vert x\Vert ^{2}-2\langle
t,x\rangle +\Vert t\Vert ^{2},  \label{Euclidean distance function}
\end{equation}%
from where the desired representation follows.
\end{proof}

\bigskip

As a consequence of a well known necessary and sufficient condition for the
inconsistency of an arbitrary system of linear inequalities \cite[Theorem
4.4(i)]{GL98}, from (\ref{eq:linrep}) we obtain the following characterization
of empty Voronoi cells.

\begin{theorem}
\label{thm:dualchar}Let $T$ be a finite subset of $\R^{n}$, and let
$S$ be a nonempty and proper subset of $T$. Then%
\[
V_{T}(S)=\emptyset
\]
iff%
\begin{equation}%
\begin{pmatrix}
0_{n}\\
-1
\end{pmatrix}
\in\cone \left\{
\begin{pmatrix}
t-s\\
\Vert t\Vert^{2}-\Vert s\Vert^{2}%
\end{pmatrix}
,\;s\in S,\,t\in T\setminus S\right\}  .\label{eq:dualchar}%
\end{equation}

\end{theorem}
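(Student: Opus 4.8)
The plan is to read off the finite linear-inequality description of $V_T(S)$ furnished by Proposition~\ref{prop:inequalities} and then invoke the theorem of the alternative for the inconsistency of such a system. Concretely, by \eqref{eq:linrep} the cell is the solution set of the finite system
\[
\langle a_{s,t},x\rangle\le b_{s,t},\qquad s\in S,\ t\in T\setminus S,
\]
with data $a_{s,t}:=t-s\in\R^n$ and $b_{s,t}:=\tfrac12(\Vert t\Vert^2-\Vert s\Vert^2)\in\R$. Hence the statement $V_T(S)=\emptyset$ is exactly the assertion that this system is inconsistent, and the whole proof amounts to dualizing that inconsistency.

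Next I would apply the inconsistency criterion \cite[Theorem 4.4(i)]{GL98}, which in the present finite setting is Gale's theorem of the alternative: the system $\{\langle a_i,x\rangle\le b_i\}_{i\in I}$ has no solution if and only if the vector $\binom{0_n}{-1}$ lies in the (closed) convex cone generated by the coefficient vectors $\binom{a_i}{b_i}$, $i\in I$. Equivalently, inconsistency holds iff there exist multipliers $\lambda_{s,t}\ge 0$ with $\sum_{s,t}\lambda_{s,t}a_{s,t}=0_n$ and $\sum_{s,t}\lambda_{s,t}b_{s,t}<0$. The easy direction is immediate: if such $\lambda$ exists then any feasible $x$ would yield $0=\langle\sum_{s,t}\lambda_{s,t}a_{s,t},x\rangle=\sum_{s,t}\lambda_{s,t}\langle a_{s,t},x\rangle\le\sum_{s,t}\lambda_{s,t}b_{s,t}<0$, a contradiction; the converse is the content of the cited alternative theorem (Farkas' lemma).

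It then remains to translate the multiplier condition into the membership \eqref{eq:dualchar}. Rewriting $\sum_{s,t}\lambda_{s,t}a_{s,t}=0_n$ and $\sum_{s,t}\lambda_{s,t}b_{s,t}<0$ in stacked form gives $\sum_{s,t}\lambda_{s,t}\binom{a_{s,t}}{b_{s,t}}=\binom{0_n}{-c}$ for some $c>0$, and since a cone is invariant under positive scaling this is equivalent to $\binom{0_n}{-1}\in\cone\{\binom{a_{s,t}}{b_{s,t}}\}$. The only cosmetic discrepancy is the factor $\tfrac12$ between $b_{s,t}$ and the last entry $\Vert t\Vert^2-\Vert s\Vert^2$ of the generators in \eqref{eq:dualchar}; because replacing each generator by a positive multiple of itself leaves the generated cone unchanged, $\binom{0_n}{-1}\in\cone\{\binom{t-s}{\tfrac12(\Vert t\Vert^2-\Vert s\Vert^2)}\}$ iff $\binom{0_n}{-1}\in\cone\{\binom{t-s}{\Vert t\Vert^2-\Vert s\Vert^2}\}$, which is precisely \eqref{eq:dualchar}.

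I do not anticipate a genuine obstacle, since the result is a direct specialization of a known alternative theorem; the points demanding care are purely bookkeeping. First, I must keep the inequality sense ($\le$) and the sign of the target vector $\binom{0_n}{-1}$ coherent throughout, as an error there would flip the characterization. Second, I should note that because the index set $\{(s,t):s\in S,\ t\in T\setminus S\}$ is finite, the generated cone is finitely generated, hence polyhedral and therefore closed; consequently no closure operation is needed, and the general (semi-infinite) statement of \cite[Theorem 4.4(i)]{GL98} specializes cleanly to the closed-form membership \eqref{eq:dualchar}.
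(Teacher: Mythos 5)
Your argument is correct and follows exactly the route the paper takes: the paper derives the theorem in one line by applying the inconsistency criterion of \cite[Theorem 4.4(i)]{GL98} to the representation \eqref{eq:linrep}. The additional bookkeeping you supply (the factor $\tfrac{1}{2}$ absorbed by positive rescaling of the generators, and the closedness of the finitely generated cone rendering the closure in the semi-infinite statement unnecessary) is precisely what the paper leaves implicit.
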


We use the characterization in Theorem~\ref{thm:dualchar} to obtain two well
known statements about the classic Voronoi cells.

\begin{corollary}
\label{nonemptiness}Let $T$ be a finite subset of $\R^{n}$ with
$|T|\geq2.$ Then%
\[
V_{T}(T\setminus\{t\})\neq\emptyset
\]
iff $t$ is an extreme point (vertex) of $\conv T$.
\end{corollary}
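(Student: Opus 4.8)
The plan is to prove the equivalent contrapositive statement, namely that $V_T(T\setminus\{t\})=\emptyset$ if and only if $t$ is \emph{not} a vertex of $\conv T$; for a finite set $T$ this non-vertex condition is in turn equivalent to the membership $t\in\conv(T\setminus\{t\})$, which I will take as the standard fact that deleting a non-extreme point does not change the convex hull. The starting point is Theorem~\ref{thm:dualchar} applied with $S=T\setminus\{t\}$ (a nonempty proper subset, since $|T|\geq 2$). Then $T\setminus S=\{t\}$ is a singleton, so the generating vectors in the cone condition (\ref{eq:dualchar}) are indexed only by $s\in T\setminus\{t\}$, and the criterion becomes: there exist scalars $\lambda_s\geq 0$, $s\in T\setminus\{t\}$, with $\sum_s\lambda_s(t-s)=0$ and $\sum_s\lambda_s(\|t\|^2-\|s\|^2)=-1$. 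The entire proof thus reduces to showing that such multipliers exist precisely when $t$ is a convex combination of the remaining sites.

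For the direction ``emptiness $\Rightarrow$ $t\in\conv(T\setminus\{t\})$'' I would start from the multipliers $\lambda_s$ supplied by the theorem. The second equation forces $\sum_s\lambda_s\neq 0$, and since all $\lambda_s\geq 0$ this means $\Lambda:=\sum_s\lambda_s>0$. Dividing the first equation by $\Lambda$ then exhibits $t=\sum_s(\lambda_s/\Lambda)s$ as a convex combination of points of $T\setminus\{t\}$, so $t\in\conv(T\setminus\{t\})$. Notice that here the first equation alone carries the geometry, the second serving only to certify that the multipliers do not all vanish.

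For the reverse direction, suppose $t=\sum_s\mu_s s$ with $\mu_s\geq 0$ and $\sum_s\mu_s=1$. The natural candidate is $\lambda_s:=\mu_s/c$ for a suitable normalizing constant $c$: a direct computation using $\sum_s\mu_s s=t$ shows $\sum_s\lambda_s(t-s)=0$ automatically, while $\sum_s\lambda_s(\|t\|^2-\|s\|^2)=-1$ holds exactly when $c=\sum_s\mu_s\|s\|^2-\|t\|^2$. The crux, and the only genuine obstacle, is to check that this $c$ is strictly positive (which also guarantees $\lambda_s\geq 0$). I would settle this through the ``variance'' identity
\[
\sum_s\mu_s\|s-t\|^2=\sum_s\mu_s\|s\|^2-\|t\|^2=c,
\]
which is valid because $t=\sum_s\mu_s s$. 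The left-hand side is a sum of nonnegative terms, and at least one term with $\mu_s>0$ satisfies $s\neq t$, since every site in $T\setminus\{t\}$ differs from $t$; hence $c>0$. This closes the equivalence and, reverting through the contrapositive, establishes the corollary.
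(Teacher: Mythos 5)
Your proof is correct and follows essentially the same route as the paper: both apply Theorem~\ref{thm:dualchar} with $S=T\setminus\{t\}$, observe that the second (scalar) equation forces the multipliers to have positive sum, and translate the resulting cone condition into the statement $t\in\conv(T\setminus\{t\})$, which characterizes non-extremality for finite sets. The only cosmetic difference is in certifying that the normalizing constant $c=\sum_{s}\mu_{s}\Vert s\Vert^{2}-\Vert t\Vert^{2}$ is strictly positive: you use the variance identity $\sum_{s}\mu_{s}\Vert s-t\Vert^{2}=c$ together with $s\neq t$, whereas the paper invokes strict convexity of $\Vert\cdot\Vert^{2}$; both arguments are sound.
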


\begin{proof}
Suppose that there exists $s\in T$ such that $V_{T}\left( s\right)
=\emptyset .$ Then \eqref{eq:dualchar} holds for $S=\{s\}$, and therefore
there exist $\lambda _{t}\geq 0$ for $t\in T\setminus \{s\}$ such that%
\begin{equation*}
\sum_{t\in T\setminus \{s\}}\lambda _{t}\left( t-s\right) =0_{n}\quad \text{
and }\quad \sum_{t\in T\setminus \{s\}}\lambda _{t}\left( \Vert t\Vert
^{2}-\Vert s\Vert ^{2}\right) =-1,
\end{equation*}%
whereby%
\begin{equation*}
\lambda _{0}:=\sum_{t\in T\setminus \{s\}}\lambda _{t}>0\;\text{ and }%
\;s=\sum_{t\in T\setminus \{s\}}\frac{\lambda _{t}}{\lambda _{0}}t.
\end{equation*}%
Since the the square of the Euclidean norm $\left\Vert .\right\Vert ^{2}$ is
a strictly convex function, we have
\begin{align*}
\sum_{t\in T\setminus \{s\}}\lambda _{t}\left\Vert t\right\Vert ^{2}+1&
=\sum_{r\in T\setminus \{s\}}\lambda _{r}\left\Vert s\right\Vert
^{2}=\lambda _{0}\left\Vert s\right\Vert ^{2}<\lambda _{0}\sum_{t\in
T\setminus \{s\}}\frac{\lambda _{t}}{\lambda _{0}}\Vert t\Vert ^{2} \\
& =\sum_{t\in T\setminus \{s\}}\lambda _{t}\left\Vert t\right\Vert ^{2},
\end{align*}%
which is a contradiction. Now, suppose that $V_{T}(T\setminus
\{t\})=\emptyset $. Denote $S:=T\setminus \{t\}$. From Theorem~\ref%
{thm:dualchar}, there exist $\lambda _{s}\geq 0$ for $s\in S$ such that%
\begin{equation*}
\sum_{s\in S}\lambda _{s}\left( t-s\right) =0_{n}\;\text{ and }\;\sum_{s\in
S}\lambda _{s}\left( \left\Vert t\right\Vert ^{2}-\left\Vert s\right\Vert
^{2}\right) =-1,
\end{equation*}%
whereby%
\begin{equation*}
\sum_{s\in S}\lambda _{s}>0\;\text{ and }\;t=\sum_{s\in S}\frac{\lambda _{s}%
}{\sum_{r\in S}\lambda _{r}}s.
\end{equation*}%
Hence, $t$ is not an extreme point. If $t$ is not an extreme point, then
there exist $\lambda _{s}\geq 0,$ $s\in S$ with $\sum_{s\in S}\lambda _{s}=1$
and $t=\sum_{s\in S}\lambda _{s}s.$ Since $\left\Vert .\right\Vert ^{2}$ is
a strictly convex function,%
\begin{align*}
\left\Vert t\right\Vert ^{2}& <\sum_{s\in S}\lambda _{s}\Vert s\Vert ^{2}; \\
&
\end{align*}%
therefore, setting $\mu _{s}:=\frac{\lambda _{s}}{\sum_{r\in S}\lambda
_{r}\Vert r\Vert ^{2}-\left\Vert t\right\Vert ^{2}}$ for $s\in S$, we have $%
\mu _{s}\geq 0$,
\begin{equation*}
\sum_{s\in S}\mu _{s}\left( t-s\right) =0_{n}\;\text{ and }\;\sum_{s\in
S}\mu _{s}\left( \left\Vert t\right\Vert ^{2}-\left\Vert s\right\Vert
^{2}\right) =-1,
\end{equation*}%
showing that (\ref{eq:dualchar}) holds, which, in view of Theorem \ref%
{thm:dualchar}, proves the corollary.
\end{proof}

\bigskip

The following result generalizes the \textquotedblleft if" statement in the
last part of Corollary \ref{nonemptiness}.

\begin{corollary}
\label{prop:emptycell}Let $T$ be a finite subset of $\R^{n}$, and let
$S$ be a nonempty and proper subset of $T$. If $(\conv S)\cap(T\setminus
S)\neq\emptyset$, then $V_{T}(S)=\emptyset$.
\end{corollary}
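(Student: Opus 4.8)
The plan is to invoke the dual characterization of emptiness from Theorem~\ref{thm:dualchar} and to exhibit explicit nonnegative coefficients witnessing \eqref{eq:dualchar}, exactly in the spirit of the ``if'' part of Corollary~\ref{nonemptiness}. First I would pick a point $t_{0}\in(\conv S)\cap(T\setminus S)$ guaranteed by the hypothesis and write it as a convex combination $t_{0}=\sum_{s\in S}\lambda_{s}s$ with $\lambda_{s}\geq0$ and $\sum_{s\in S}\lambda_{s}=1$. Since $t_{0}\in T\setminus S$, we have $t_{0}\neq s$ for every $s\in S$, so this convex combination cannot be concentrated on a single site; hence at least two distinct sites carry positive weight.

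Next I would exploit the strict convexity of $\|\cdot\|^{2}$: because the combination is nontrivial, Jensen's inequality is strict, giving $\|t_{0}\|^{2}<\sum_{s\in S}\lambda_{s}\|s\|^{2}$, so that $\delta:=\sum_{s\in S}\lambda_{s}\|s\|^{2}-\|t_{0}\|^{2}>0$. I would then set $\mu_{s}:=\lambda_{s}/\delta\geq0$ for $s\in S$ and assign coefficient $0$ to every generator in \eqref{eq:dualchar} whose upper block corresponds to a point of $T\setminus S$ other than $t_{0}$. A direct check shows that $\sum_{s\in S}\mu_{s}(t_{0}-s)=0_{n}$ (using $t_{0}=\sum_{s\in S}\lambda_{s}s$ together with $\sum_{s\in S}\lambda_{s}=1$) and $\sum_{s\in S}\mu_{s}(\|t_{0}\|^{2}-\|s\|^{2})=-1$ (by the choice of $\delta$). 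This places $\begin{pmatrix}0_{n}\\-1\end{pmatrix}$ in the cone on the right-hand side of \eqref{eq:dualchar}, and Theorem~\ref{thm:dualchar} then yields $V_{T}(S)=\emptyset$.

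The only delicate point is justifying the strict inequality $\|t_{0}\|^{2}<\sum_{s\in S}\lambda_{s}\|s\|^{2}$, i.e. ensuring the representation of $t_{0}$ is genuinely nontrivial rather than a disguised single-site representation; everything else is routine linear algebra. As an alternative self-contained route one can argue straight from the definition: for any putative $x\in V_{T}(S)$ the triangle inequality gives $\dist(t_{0},x)=\|\sum_{s\in S}\lambda_{s}(s-x)\|\le\max_{s\in S}\dist(s,x)$, while membership in the cell forces $\max_{s\in S}\dist(s,x)\le\dist(t_{0},x)$; the resulting equality in the triangle inequality, combined with the equal distances it entails and the distinctness of the sites, is impossible, giving the same conclusion. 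I would nonetheless present the dual-characterization argument as the main proof, since it directly generalizes the last part of Corollary~\ref{nonemptiness}.
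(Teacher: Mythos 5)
Your proof is correct. It is worth noting, though, that the paper's own proof of this corollary is a two-line reduction: it picks $t\in(\conv S)\cap(T\setminus S)$, observes that $t$ is then not an extreme point of $\conv(S\cup\{t\})=\conv S$, applies Corollary~\ref{nonemptiness} to get $V_{S\cup\{t\}}(S)=\emptyset$, and concludes via the monotonicity $V_{T}(S)\subseteq V_{S\cup\{t\}}(S)$ (fewer constraints define a larger set). What you do instead is inline the content of that reduction: the certificate $\mu_{s}=\lambda_{s}/\delta$ with $\delta=\sum_{s}\lambda_{s}\Vert s\Vert^{2}-\Vert t_{0}\Vert^{2}>0$ is exactly the one constructed in the last part of the proof of Corollary~\ref{nonemptiness}, padded with zero coefficients on the generators indexed by the other points of $T\setminus S$ so that it witnesses \eqref{eq:dualchar} for the full system. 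Both arguments rest on the same strict-convexity fact, and your handling of the delicate point is fine: since $t_{0}\notin S$, at least two distinct sites must carry positive weight, which makes Jensen strict. The paper's route buys brevity and reuse of an already-proved statement; yours buys self-containedness (it needs only Theorem~\ref{thm:dualchar}) and makes the Farkas certificate explicit, and your alternative triangle-inequality argument is a valid, fully elementary third option.
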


\begin{proof}
Taking $t\in(\conv S)\cap(T\setminus S),$ since $t$ is not an extreme point
of $\conv S,$ by Corollary~\ref{nonemptiness} we have $V_{T}(S)\subseteq
V_{S\cup\left\{ t\right\} }(S)=\emptyset.$
\end{proof}

\bigskip

In fact we can prove a more general geometric statement which yields the
preceding corollary.

\begin{theorem}
\label{thm:ballchar}Let $T$ be a finite subset of $\R^{n}$, and let
$S$ be a nonempty and proper subset of $T$. Then%
\[
V_{T}(S)\neq\emptyset
\]
iff there exists a closed Euclidean ball $B\supset S$ such that%
\[
\interior B\cap(T\setminus S)=\emptyset.
\]

\end{theorem}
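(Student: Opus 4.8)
The plan is to prove both implications directly from the definition of $V_T(S)$, exploiting the fact that the membership $x \in V_T(S)$ is \emph{exactly} the assertion that a suitable ball centred at $x$ separates $S$ from $T \setminus S$ in the required sense. The crucial observation is that the single threshold $r := \max_{s \in S} \dist(s,x)$ simultaneously controls both sides of the defining inequality: it is the radius of the smallest ball centred at $x$ that still contains $S$.

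For the forward implication I would take any $x \in V_T(S)$ and set $B := \{y : \dist(y,x) \le r\}$ with $r = \max_{s \in S}\dist(s,x)$ (well defined since $S$ is finite and nonempty). By the choice of $r$ every $s \in S$ satisfies $\dist(s,x) \le r$, so $S \subseteq B$. On the other hand, the defining inequality $\max_{s\in S}\dist(s,x) \le \min_{t \in T\setminus S}\dist(t,x)$ forces $\dist(t,x) \ge r$ for every $t \in T\setminus S$, which says precisely that no such $t$ lies in $\interior B = \{y : \dist(y,x) < r\}$; hence $\interior B \cap (T\setminus S) = \emptyset$.

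For the converse I would take a closed ball $B = \{y : \dist(y,c)\le\rho\}$ with $S \subseteq B$ and $\interior B \cap (T\setminus S) = \emptyset$, and show that its centre $c$ already lies in $V_T(S)$. Indeed, $S \subseteq B$ gives $\dist(s,c) \le \rho$ for all $s \in S$, hence $\max_{s\in S}\dist(s,c) \le \rho$; and emptiness of $\interior B \cap (T \setminus S)$ gives $\dist(t,c) \ge \rho$ for all $t \in T \setminus S$, hence $\min_{t\in T\setminus S}\dist(t,c) \ge \rho$. Chaining these two bounds through $\rho$ yields $c \in V_T(S)$, so $V_T(S)\neq\emptyset$.

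The argument is short, and I do not anticipate a serious obstacle; the only point requiring genuine care is matching the non-strict inequalities in the definition with the closed/open structure of the ball, namely that $S$ must sit in the \emph{closed} ball while $T\setminus S$ must avoid its \emph{interior}. It is exactly this asymmetry that makes the choice $r = \max_{s}\dist(s,x)$ work in the forward direction and lets the centre serve as a witness in the converse. As a consistency check, one recovers Corollary~\ref{prop:emptycell}: if $V_T(S)\neq\emptyset$ then the witnessing ball $B$ is convex and contains $S$, hence $\conv S \subseteq B$; were some $t \in (\conv S)\cap(T\setminus S)$ to exist it would lie in $B$ but outside $\interior B$, i.e.\ on the bounding sphere, and strict convexity of the Euclidean ball would force $t$ to coincide with the points of $S$ it is averaged from, contradicting $t \notin S$.
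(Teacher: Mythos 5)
Your proof is correct and follows essentially the same route as the paper: in the forward direction both take the closed ball centred at a point of $V_T(S)$ with radius $\max_{s\in S}\dist(s,x)$, and in the converse both verify that the centre of the given ball lies in $V_T(S)$ by chaining the two distance bounds through the radius.
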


\begin{proof}
First assume that $V_{T}(S)\neq \emptyset $. Then there exists $c\in
V_{T}(S) $. Let
\begin{equation*}
R:=\max_{s\in S}\dist(c,s),
\end{equation*}%
and let $B$ be the closed Euclidean ball of radius $R$ centered at $c,$
clearly $S\subset B$. Evidently, $\interior B\cap (T\setminus
S)=\emptyset $, otherwise we would have $\dist(c,t)<\dist(c,s)$ for some $%
t\in T\setminus S$ and $s\in S$, hence, $c\notin V_{T}(S)$, which
contradicts our choice of $c$. Now assume that there exists some closed
Euclidean ball $B$ such that $S\subset B$ and $\interior B\cap (T\setminus
S)=\emptyset $. The centre of the ball $B$ is contained in $V_{T}(S)$,
hence, $V_{T}(S)\neq \emptyset $.
\end{proof}

\bigskip

We give an explicit example of an empty cell with $|S|=2$ and $|T|=3$.

\begin{example}
\label{eg:empty} Let $s_{1}=(-1,0)$, $s_{2}=(1,0)$, $t=(0,0)$. It is not
difficult to observe that \eqref{eq:linrep} becomes
\[
V_{T}(S) = \left\{  (x_{1},x_{2})\, : \, x_{1}\leq-\frac{1}{2}, x_{1}\geq
\frac{1}{2}\right\}  = \emptyset.
\]
This configuration is shown in Fig.~\ref{fig:empty-cell-minimal}.
\begin{figure}[th]
\centering \includegraphics[width=0.7\textwidth]{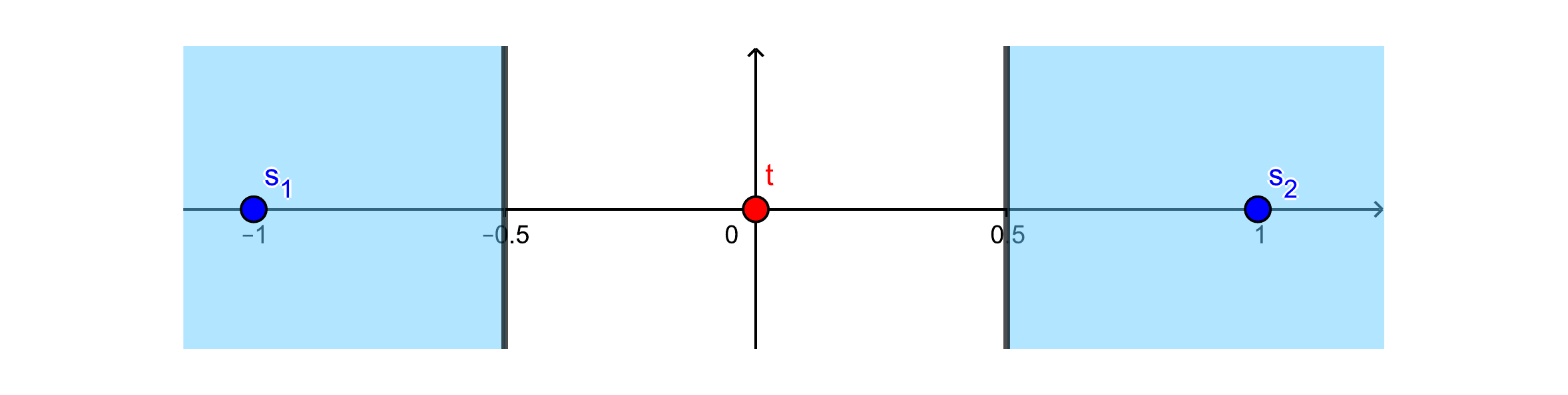}\caption{Minimal
configuration for an empty cell}%
\label{fig:empty-cell-minimal}%
\end{figure}
\end{example}

Notice that we can likewise construct an empty cell with $|T|>|S|\geq2$ by
making sure that $(T\setminus S)\cap\conv S\neq\emptyset$ (using
Corollary~\ref{prop:emptycell}).

\begin{theorem}
\label{thm:boundedcell}Let $T$ be a finite subset of $\R^{n}$, and let
$S$ be a nonempty and proper subset of $T$. Then $V_{T}(S)$ is bounded iff%
\[
\cone \left\{  t-s,s\in S,t\in T\setminus S\right\}  =\R^{n}.
\]

\end{theorem}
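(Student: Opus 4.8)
The plan is to read the result directly off the halfspace description of $V_T(S)$ given in Proposition~\ref{prop:inequalities}, combined with the standard fact that a nonempty polyhedron is bounded precisely when its recession cone is trivial. Write $a_{s,t}:=t-s$ for $s\in S$ and $t\in T\setminus S$; then representation \eqref{eq:linrep} exhibits $V_T(S)$ as a polyhedron whose constraint normals are exactly the generators of the cone $K:=\cone\{t-s\,:\,s\in S,\ t\in T\setminus S\}$. The whole proof is then an exercise in polarity relating $K$ to the directions in which $V_T(S)$ can be unbounded.

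First I would identify the recession directions of $V_T(S)$. A vector $d$ is such a direction exactly when $\langle a_{s,t},d\rangle\le 0$ for every pair $(s,t)$: adding $\tau d$ with $\tau\ge 0$ to a feasible point preserves all the inequalities in \eqref{eq:linrep} if and only if this condition holds (letting $\tau\to\infty$ forces the inequality, and it clearly suffices). Hence the set of recession directions is precisely the polar cone $K^{\circ}=\{d:\langle a,d\rangle\le 0\ \forall a\in K\}$.

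Second I would invoke polarity. Since $K$ is finitely generated it is a closed convex cone (Minkowski--Weyl), so by the bipolar theorem $K^{\circ}=\{0\}$ holds if and only if $K=\R^{n}$ (if $K\neq\R^{n}$ then $K$ lies in a halfspace, producing a nonzero vector in $K^{\circ}$; conversely $K=\R^{n}$ forces $K^{\circ}=\{0\}$). Combining this with ``nonempty polyhedron is bounded iff its recession cone is $\{0\}$'' yields the stated equivalence: $V_T(S)$ is bounded iff $K^{\circ}=\{0\}$ iff $K=\R^{n}$.

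The main obstacle is the empty cell. If $V_T(S)=\emptyset$ it is vacuously bounded, yet $K=\R^{n}$ may fail: in Example~\ref{eg:empty} one has $K=\R\times\{0\}\neq\R^{2}$ even though the cell is empty and hence bounded. So the equivalence is genuinely a statement about nonempty cells. I would therefore run the substantive direction in contrapositive form, ``$K\neq\R^{n}\Rightarrow V_T(S)$ unbounded'', under the standing assumption $V_T(S)\neq\emptyset$: choose $x_0\in V_T(S)$ and a nonzero $d\in K^{\circ}$, and observe that the entire ray $x_0+\tau d$, $\tau\ge 0$, lies in $V_T(S)$. The reverse direction ``$K=\R^{n}\Rightarrow$ bounded'' needs no nonemptiness hypothesis (an empty set is bounded, and a nonempty one has trivial recession cone). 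Accordingly I would either state the theorem for nonempty $V_T(S)$ or explicitly dispatch the empty case first using Theorem~\ref{thm:dualchar}.
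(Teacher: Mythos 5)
Your proof is correct and is essentially the paper's argument unpacked: the paper's one-line proof simply identifies $\cone\left\{ t-s,\; s\in S,\, t\in T\setminus S\right\}$ as the first moment cone of the system \eqref{eq:linrep} and appeals to the standard characterization of bounded solution sets of consistent linear systems, which is exactly your recession-cone/polarity computation ($d$ is a recession direction iff $d\in K^{\circ}$, and $K^{\circ}=\{0\}$ iff $K=\R^{n}$ by the bipolar theorem for the closed, finitely generated cone $K$). Your extra care with the empty cell is warranted and is the one point where you go beyond the paper: as you observe via Example~\ref{eg:empty}, the equivalence as literally stated fails when $V_{T}(S)=\emptyset$ (the empty set is bounded while the cone need not be all of $\R^{n}$), so the substantive direction must be run under the standing assumption $V_{T}(S)\neq\emptyset$, a caveat the paper's proof silently omits.
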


\begin{proof}
It suffices to observe that from the linear representation \eqref{eq:linrep}
we obtain that the first moment cone of $V_{T}(S)$ is $cone\left\{
t-s,\;s\in S,t\in T\setminus S\right\} $.
\end{proof}

\begin{remark}
\label{Remark 8}It follows from Theorem~\ref{thm:boundedcell} that if $n\geq2
$ and $|T|\leq3$ all nonempty cells are unbounded.
\end{remark}

We can strengthen the result in the preceding remark as follows.

\begin{theorem}
\label{thm:cardbound}Let $T$ be a finite subset of $\R^{n}$. If
\begin{equation}
|T|<2\sqrt{n+1},\label{eq:cardbound}%
\end{equation}
then for any $S\subset T$ the cell $V_{T}(S)$ is either empty or unbounded.
\end{theorem}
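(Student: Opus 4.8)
The plan is to use Theorem~\ref{thm:boundedcell}: a nonempty cell $V_T(S)$ is bounded if and only if the vectors $\{t-s : s\in S,\, t\in T\setminus S\}$ positively span $\R^n$. So I would aim to show that when $|T|<2\sqrt{n+1}$, this collection of difference vectors can never positively span $\R^n$, which forces every nonempty cell to be unbounded. Writing $k:=|S|$ and $m:=|T|-k=|T\setminus S|$, the number of difference vectors $t-s$ is exactly $km$. A classical fact is that a positively spanning set in $\R^n$ must contain at least $n+1$ vectors. Hence a necessary condition for boundedness is
\begin{equation*}
km \geq n+1.
\end{equation*}

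The main idea is then to turn this into a lower bound on $|T|=k+m$ via the AM--GM inequality. Since $km \geq n+1$, we have $k+m \geq 2\sqrt{km} \geq 2\sqrt{n+1}$. Therefore if $|T|=k+m<2\sqrt{n+1}$, then $km<n+1$, so the $km$ difference vectors cannot positively span $\R^n$, and by Theorem~\ref{thm:boundedcell} the cell $V_T(S)$ cannot be bounded unless it is empty. This is clean and essentially reduces the whole statement to two standard inequalities chained together.

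The one step requiring care is the claim that a positively spanning set of $\R^n$ must have at least $n+1$ elements. I would justify it briefly: if $\{v_1,\dots,v_p\}$ positively spans $\R^n$, then in particular it spans $\R^n$ as a linear space, so $p\geq n$; and $p=n$ is impossible because $n$ vectors positively spanning $\R^n$ would have to be linearly independent, yet the vector $-\sum_i v_i$ could not be written as a nonnegative combination of linearly independent vectors whose nonnegative hull is a pointed cone. Hence $p\geq n+1$. I expect this lemma to be the only genuine obstacle; once it is in place the arithmetic is immediate.

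A small caveat to record is the degenerate boundary: if $km=n+1$ exactly, then $k+m\geq 2\sqrt{n+1}$ with equality only when $k=m=\sqrt{n+1}$, which requires $n+1$ to be a perfect square; the strict inequality $|T|<2\sqrt{n+1}$ in the hypothesis safely excludes this boundary case, so no separate treatment is needed. Thus the chain $|T|<2\sqrt{n+1}\Rightarrow km<n+1\Rightarrow$ the differences do not positively span $\R^n\Rightarrow V_T(S)$ empty or unbounded completes the argument.
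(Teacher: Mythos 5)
Your proof is correct and follows essentially the same route as the paper: both arguments count the $k(|T|-k)$ constraints and rest on the fact that at least $n+1$ of them are needed for a nonempty bounded cell (the paper invokes this as ``a nonempty bounded polyhedron in $\R^{n}$ must be defined by at least $n+1$ inequalities,'' while you state the polar version via Theorem~\ref{thm:boundedcell} together with the lemma, which you correctly justify, that a positively spanning set of $\R^{n}$ has at least $n+1$ elements). The only cosmetic difference is that you deduce $k(|T|-k)<n+1$ from $|T|<2\sqrt{n+1}$ by the AM--GM inequality, whereas the paper maximizes $k(|T|-k)$ over $k$ to obtain the bound $\left\lfloor |T|^{2}/4\right\rfloor \leq n$.
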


\begin{proof}
Assume that $V_{T}(S)=\emptyset .$ The proof is based on the observation
that a nonempty bounded polyhedron in $\R^{n}$ must be defined by at least $%
n+1$ inequalities. Let $p:=|T|$, $k:=|S|$. Then the number of inequalities
that feature in the representation \eqref{eq:linrep} is $\phi (k)=k(p-k)$.
Observe that $\phi $ attains its maximum at $\frac{p}{2}$ for even $p$ and
at $\frac{p-1}{2}$ for odd $p$. Hence for even $p$
\begin{equation*}
k(p-k)\leq \frac{p}{2}\left( p-\frac{p}{2}\right) =\frac{p^{2}}{4}%
=\left\lfloor \frac{p^{2}}{4}\right\rfloor ,
\end{equation*}%
and for odd $p$
\begin{equation*}
k(p-k)\leq \frac{p-1}{2}\left( p-\frac{p-1}{2}\right) =\frac{p^{2}-1}{4}%
=\left\lfloor \frac{p^{2}}{4}\right\rfloor ,
\end{equation*}%
hence, ensuring \eqref{eq:cardbound} yields at most $n$ inequalities that
define each cell, and so all nonempty cells are unbounded.
\end{proof}

\begin{proposition}
\label{prop:four-three} Let $S\subset T\subset\R^{n}$, with $|S|=3$
and $|T|=4$. Then $V_{T}(S)$ is either empty or unbounded.
\end{proposition}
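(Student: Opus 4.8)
The plan is to combine the boundedness criterion of Theorem~\ref{thm:boundedcell} with the emptiness result of Corollary~\ref{prop:emptycell}, exploiting the fact that here $|T\setminus S|=|T|-|S|=1$. Writing $T\setminus S=\{t\}$, the generating set appearing in Theorem~\ref{thm:boundedcell} reduces to the three vectors $t-s$, $s\in S$. I would argue by contraposition: assuming $V_T(S)$ is bounded, I will show it must in fact be empty, which immediately yields the stated dichotomy ``empty or unbounded''.

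So suppose $V_T(S)$ is bounded. By Theorem~\ref{thm:boundedcell} this says $\cone\{t-s:s\in S\}=\R^{n}$. The key step is to extract from this that $t\in\conv S$. Since the cone is all of $\R^{n}$, it contains $-(t-s_0)$ for a fixed $s_0\in S$; writing this as a nonnegative combination of the three generators and collecting terms produces a nontrivial relation $\sum_{s\in S}\lambda_s(t-s)=0$ with every $\lambda_s\geq 0$ and $\sum_{s\in S}\lambda_s>0$ (the coefficient of $t-s_0$ is at least $1$). Normalizing the $\lambda_s$ then expresses $t$ as a convex combination of the points of $S$, so $t\in\conv S$. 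Equivalently, one may invoke the standard fact that a finitely generated cone equals $\R^{n}$ iff the origin lies in the interior of the convex hull of its generators, and translate by $t$ to read off $t\in\interior\conv S$; either route delivers $t\in\conv S$.

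Once $t\in\conv S$ is in hand, the conclusion is immediate: since $t\in T\setminus S$, we have $t\in(\conv S)\cap(T\setminus S)\neq\emptyset$, so Corollary~\ref{prop:emptycell} gives $V_T(S)=\emptyset$. This establishes the contrapositive and hence the proposition.

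I expect the only genuine obstacle to be the middle step, namely converting the cone condition $\cone\{t-s:s\in S\}=\R^{n}$ into the membership $t\in\conv S$; and even this is routine, being essentially the observation that a cone equal to the whole space forces a nontrivial nonnegative dependence among its generators. A point worth flagging is that the argument is uniform in $n$ and needs no case analysis: for $n\geq 3$ the cone of three vectors can never fill $\R^{n}$ (the convex hull of three points is at most two-dimensional), so boundedness fails outright, while for $n\leq 2$ the contrapositive above does the work; both situations are subsumed by the single chain ``bounded $\Rightarrow t\in\conv S\Rightarrow$ empty''.
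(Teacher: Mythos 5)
Your proof is correct and uses essentially the same ingredients as the paper's: the paper splits directly on whether $t\in\conv S$, handling the emptiness case via Corollary~\ref{nonemptiness} and the other case by separating $t$ from $\conv S$ so that $\cone\{t-s:s\in S\}$ lies in a halfspace, which is exactly the contrapositive of your step deriving $t\in\conv S$ from $\cone\{t-s:s\in S\}=\R^{n}$. Your reorganization as a single chain ``bounded $\Rightarrow t\in\conv S\Rightarrow$ empty'' is a valid and slightly more streamlined presentation of the same argument.
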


\begin{proof}
Put $T\setminus S=\left\{ t\right\} .$ In case $t\in \conv S$, then
Corollary~\ref{nonemptiness} gives that $V_{T}(S)=\emptyset $. If $t\notin %
\conv S$, then $t$ can be separated from $S$, and by Theorem~\ref%
{thm:boundedcell} the cell has to be unbounded.
\end{proof}

\bigskip

The following statement will be useful later for a discussion on planar
quadrilateral cells.

\begin{proposition}
\label{prop:intersectfour}Let $S\subset T\subset\R^{2},$ with $S=\{s_1,s_2\}$
and $T\setminus S = \{t_1,t_2\}$. Then
\[
V_{T}(S)\text{ is bounded iff }(s_{1},s_{2})\cap(t_{1},t_{2})\text{ is a
singleton}.
\]

\end{proposition}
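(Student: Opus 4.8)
The plan is to reduce the statement to a two-dimensional fact about the difference vectors via Theorem~\ref{thm:boundedcell}. Setting $C := \cone\{t_i - s_j : i,j \in \{1,2\}\}$, that theorem tells us $V_T(S)$ is bounded iff $C = \R^2$. Since $C$ is a finitely generated (hence closed) cone in the plane, $C \neq \R^2$ holds iff its four generators lie in a common closed half-plane whose boundary passes through the origin, i.e.\ iff there is some $a \neq 0$ with $\langle a, t_i - s_j\rangle \leq 0$ for all $i,j$, equivalently $\max_i \langle a, t_i\rangle \leq \min_j \langle a, s_j\rangle$. Thus the first step establishes that $V_T(S)$ is \emph{unbounded} precisely when the closed segments $[s_1,s_2]$ and $[t_1,t_2]$ admit a weak linear separation.

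It then remains to show, as a purely planar statement, that $[s_1,s_2]$ and $[t_1,t_2]$ fail to be weakly separable exactly when $(s_1,s_2) \cap (t_1,t_2)$ is a single point. For the implication that a singleton intersection precludes separation, I would argue that if a line $\{x : \langle a,x\rangle = \alpha\}$ weakly separated the segments and $p \in (s_1,s_2) \cap (t_1,t_2)$, then $\langle a,p\rangle = \alpha$, and since $p$ is a strict convex combination of $s_1,s_2$ (resp.\ of $t_1,t_2$) lying on one closed side of the line, all four points would be forced onto the line; but two collinear segments sharing a relative interior point overlap in a whole subsegment, contradicting that the open intersection is a single point. Alternatively---and more constructively---if $p = \lambda s_1 + (1-\lambda)s_2 = \mu t_1 + (1-\mu)t_2$ with $\lambda,\mu \in (0,1)$, a direct expansion gives the positive vanishing combination
\begin{equation*}
\lambda\mu\,(t_1-s_1) + \lambda(1-\mu)\,(t_2-s_1) + (1-\lambda)\mu\,(t_1-s_2) + (1-\lambda)(1-\mu)\,(t_2-s_2) = 0,
\end{equation*}
and because a singleton crossing forces the four points off a common line, the difference vectors span $\R^2$; a strictly positive vanishing combination of spanning planar vectors forces $C = \R^2$. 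The converse direction amounts to exhibiting a separating line whenever the open segments do not meet in a single point: if the open segments are disjoint their relative interiors can be properly, hence weakly, separated, and if the four points are collinear any direction orthogonal to the common line separates them weakly.

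The step I expect to be the main obstacle is not the algebra but the geometric bookkeeping of the degenerate configurations, which is exactly what makes ``$(s_1,s_2)\cap(t_1,t_2)$ is a singleton'' the sharp dividing line. In particular one must check the cases where the closed segments touch but the open ones do not---most notably when an endpoint $t_i$ lies in the relative interior of $[s_1,s_2]$ with $t_{3-i}$ off that line, so that the segments meet yet $(s_1,s_2)\cap(t_1,t_2)=\emptyset$; there the line through $s_1,s_2$ itself realizes a weak separation, correctly classifying the cell as unbounded. Cataloguing these touching and collinear-overlap configurations is what guarantees that the crossing condition captured by the singleton intersection coincides exactly with boundedness of the cell.
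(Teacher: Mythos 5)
Your argument is correct, and its first half coincides with the paper's: both reduce boundedness to $\cone\{t_i-s_j:i,j=1,2\}=\R^{2}$ via Theorem~\ref{thm:boundedcell} and then pass to the polar cone / common closed half-plane criterion. Where you genuinely diverge is in how the resulting planar statement is settled. The paper treats the two directions asymmetrically: for ``singleton $\Rightarrow$ bounded'' it normalizes the crossing point to the origin, writes $s_{2}=-\lambda s_{1}$, $t_{2}=-\mu t_{1}$, and kills any $p\in M^{\circ}$ by three explicit inner-product inequalities; for ``bounded $\Rightarrow$ singleton'' it argues that $t_{1},t_{2}$ must lie strictly on opposite sides of the line through $s_{1},s_{2}$ (and vice versa), excludes the degenerate convex-position configurations one by one, and concludes that $s_{1},s_{2}$ are opposite vertices of the quadrilateral. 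You instead fold both directions into the single equivalence ``$C\neq\R^{2}$ iff $[s_{1},s_{2}]$ and $[t_{1},t_{2}]$ are weakly separable,'' and then dispose of the planar equivalence by the proper separation theorem (relative interiors disjoint iff properly separable) plus the observation that two open segments meet in exactly one point only when their carrier lines are distinct; your strictly positive vanishing combination $\lambda\mu(t_{1}-s_{1})+\lambda(1-\mu)(t_{2}-s_{1})+(1-\lambda)\mu(t_{1}-s_{2})+(1-\lambda)(1-\mu)(t_{2}-s_{2})=0$ is a particularly transparent proof of the ``if'' direction. Your route buys uniformity and generality (the weak-separability criterion for $\conv S$ versus $\conv(T\setminus S)$ characterizes unboundedness for arbitrary $S$, $T$ in $\R^{n}$), at the cost of importing the proper separation theorem where the paper stays elementary. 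One caution: in the touching configuration you highlight ($t_{1}\in(s_{1},s_{2})$ with $t_{2}$ off that line) the cell is not unbounded but \emph{empty} by Corollary~\ref{prop:emptycell}, hence bounded as a set; this does not break your formal chain, because Theorem~\ref{thm:boundedcell} as stated already identifies ``bounded'' with ``first moment cone equal to $\R^{n}$'' and the paper's own proof relies on the same convention for empty cells, but the geometric gloss there is inaccurate.
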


\begin{proof}
The configuration of the points of $T$ in Fig.~\ref{fig:four-bounded}
\begin{figure}[th]
\centering \includegraphics[width=0.6\textwidth]{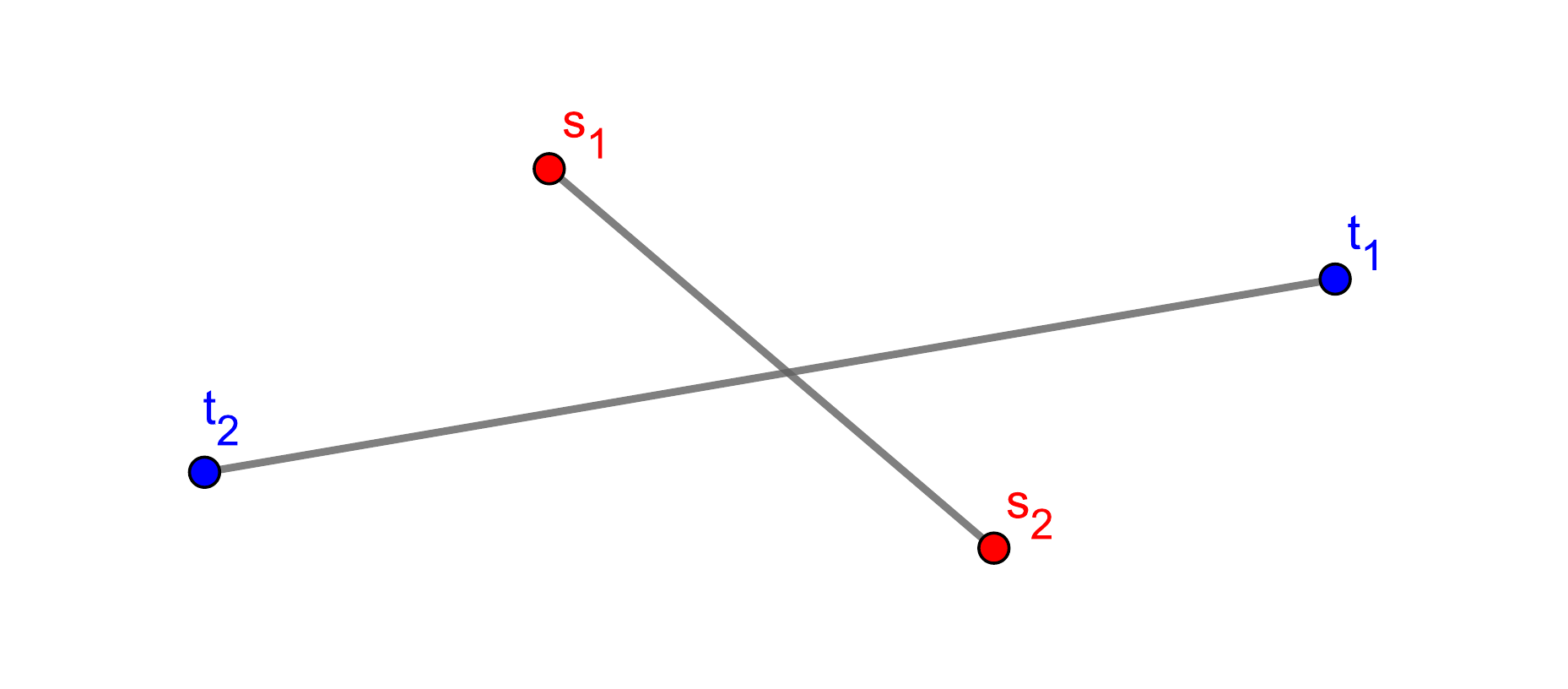}
\caption{Configuration for a bounded cell}
\label{fig:four-bounded}
\end{figure}
means that if we take the line through $s_{1}$ and $s_{2},$ then $t_{1}$ and
$t_{2}$ belong to the two opposite open halfspaces defined by this line. The
same holds true if we interchange $s_{1}$ and $s_{2}$ with $t_{1}$ and $%
t_{2} $. From the linear representation \eqref{eq:linrep} we obtain that the
first moment cone $M$ of $V_{T}(S)$ is equal to $\cone\left\{ t_{i}-s_{j},%
\text{ }i,j=1,2\right\} .$ Let us consider that the configuration of the
points of $T $ is like in Fig.~\ref{fig:four-bounded}. Then, we are going to
prove that $M=%
\R
^{2},$ which, by Theorem \ref{thm:boundedcell}, implies that $V_{T}\left\{
S\right\} $ is bounded$.$ What we are actually going to prove is the
equivalent assertion that the polar cone $M^{\circ }$ reduces to $\left\{
0_{2}\right\} .$ To this aim, let $p\in M^{\circ }$ and assume, w.l.o.g.,
that $(s_{1},s_{2})\cap (t_{1},t_{2})=\left\{ 0_{2}\right\} .$ Then there
exist $\lambda ,\mu >0$ such that $s_{2}=-\lambda s_{1}$ and $t_{2}=-\mu
t_{1}.$ Since $\left\langle p,t_{1}-s_{1}\right\rangle \leq 0$ and $%
\left\langle p,t_{1}+\lambda s_{1}\right\rangle =\left\langle
p,t_{1}-s_{2}\right\rangle \leq 0,$ we have $\left\langle
p,t_{1}\right\rangle \leq 0.$ This inequality combined with $\left\langle
p,-\mu t_{1}-s_{1}\right\rangle =\left\langle p,t_{2}-s_{1}\right\rangle
\leq 0$ yields $\left\langle p,s_{1}\right\rangle \geq 0;$ hence, in view of
$\left\langle p,-\mu t_{1}+\lambda s_{1}\right\rangle =\left\langle
p,t_{2}-s_{2}\right\rangle \leq 0,$ it turns out that $\left\langle
p,t_{1}\right\rangle =0=\left\langle p,s_{1}\right\rangle .$ Since $s_{1}$
and $t_{1}$ are linearly independent because of the assumption $%
(s_{1},s_{2})\cap (t_{1},t_{2})=\left\{ 0_{2}\right\} $, we conclude that $%
p=0,$ as was to be proved. Second, let the Voronoi cell $V_{T}\left\{
s_{1},s_{2}\right\} $ be bounded. Then, by Theorem \ref{thm:boundedcell},
the first moment cone $\cone\left\{ t_{i}-s_{j},\text{ }i,j=1,2\right\} $ is
the whole of $\R^{2}.$ This implies that $t_{1}$ and $t_{2}$ are not
on a common closed halfplane out of the two determined by the straight line through
 $s_{1}$ and $s_{2},$ as otherwise that cone would be contained in the
translate of that half-space with the boundary line passing through the origin, and the same assertion
holds true when we interchange $t_{1}$ and $t_{2}$ with $s_{1}$ and $s_{2}.$
This rules out the possibility that $\conv\left\{
s_{1},s_{2},t_{1},t_{2}\right\} $ be a segment, a triangle, or a
quadrilateral having $s_{1}$ and $s_{2}$ as adjacent vertices. Therefore $%
s_{1}$ and $s_{2}$ are opposite vertices of the quadrilateral $\conv\left\{
s_{1},s_{2},t_{1},t_{2}\right\} ,$ which clearly implies that $%
(s_{1},s_{2})\cap (t_{1},t_{2})$ is a singleton. The proof is completed.
\end{proof}

\begin{theorem}
\label{thm:interior}Let $T$ be a finite subset of $\R^{n}$, and let
$S$ be a nonempty and proper subset of $T$. Then%
\[
\interior V_{T}(S)\neq\emptyset
\]
iff%
\[
0_{n+1}\notin\conv\left\{
\begin{pmatrix}
t-s\\
\left\Vert t\right\Vert ^{2}-\left\Vert s\right\Vert ^{2}%
\end{pmatrix}
,\quad s\in S,\;t\in T\setminus S\right\}  .
\]

\end{theorem}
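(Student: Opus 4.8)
The plan is to recast the topological condition $\interior V_{T}(S)\neq\emptyset$ as the solvability of a system of \emph{strict} linear inequalities, and then to apply a theorem of the alternative, mirroring the way Theorem~\ref{thm:dualchar} was derived from the non-strict system \eqref{eq:linrep}. First I would note that $s\neq t$ whenever $s\in S$ and $t\in T\setminus S$, so every normal $t-s$ in \eqref{eq:linrep} is nonzero and none of the defining halfspaces is degenerate. Hence the interior of the polyhedron is cut out by the corresponding strict inequalities, and $\interior V_{T}(S)\neq\emptyset$ holds if and only if the system
\[
\langle t-s,x\rangle<\tfrac12\left(\Vert t\Vert^{2}-\Vert s\Vert^{2}\right),\qquad s\in S,\ t\in T\setminus S,
\]
is consistent. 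Abbreviating $d_{s,t}:=(t-s,\,\Vert t\Vert^{2}-\Vert s\Vert^{2})\in\R^{n+1}$, the harmless factor $\tfrac12$ does not affect the position of the origin relative to $\conv\{d_{s,t}\}$.

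The ``only if'' direction is elementary, and I would settle it first. Assume $0_{n+1}\in\conv\{d_{s,t}\}$, so that $\sum\mu_{s,t}(t-s)=0_{n}$ and $\sum\mu_{s,t}(\Vert t\Vert^{2}-\Vert s\Vert^{2})=0$ for some $\mu_{s,t}\geq0$ with $\sum\mu_{s,t}=1$. If some $x$ satisfied all the strict inequalities, then multiplying each by $\mu_{s,t}$ and summing---strictness survives because the weights are nonnegative and not all zero---would give $0<0$. Therefore $0_{n+1}\in\conv\{d_{s,t}\}$ forces $\interior V_{T}(S)=\emptyset$.

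For the converse I would appeal to a Gordan/Motzkin theorem of the alternative for strict systems, homogenising in an extra variable that carries the right-hand side (the same mechanism behind \cite[Theorem~4.4]{GL98} that underlies Theorem~\ref{thm:dualchar}). Applied to an inconsistent strict system, it returns nonnegative multipliers $\lambda_{s,t}$, not all zero, with $\sum\lambda_{s,t}(t-s)=0_{n}$ and $\sum\lambda_{s,t}(\Vert t\Vert^{2}-\Vert s\Vert^{2})\leq0$. Dividing by $\sum\lambda_{s,t}$ (which is positive) expresses a convex combination of the $d_{s,t}$ as $(0_{n},c)$ with $c\leq0$; equivalently, $\conv\{d_{s,t}\}$ meets the ray $\{0_{n}\}\times(-\infty,0]$.

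The main obstacle is exactly the sign of this $c$. The certificate only puts a point of $\conv\{d_{s,t}\}$ on the nonpositive part of the last axis, whereas the statement asks for the origin itself, so I have to exclude the case $c<0$. This is where Theorem~\ref{thm:dualchar} should be brought in: if $c<0$ then rescaling gives $(0_{n},-1)\in\cone\{d_{s,t}\}$, which is precisely the condition $V_{T}(S)=\emptyset$. So the equivalence goes through once the negative-ray case is ruled out, i.e.\ once $V_{T}(S)$ is known to be nonempty; I would expect the clean formulation either to carry the hypothesis $V_{T}(S)\neq\emptyset$ or to replace $0_{n+1}\notin\conv\{d_{s,t}\}$ by disjointness of $\conv\{d_{s,t}\}$ from the whole ray $\{0_{n}\}\times(-\infty,0]$. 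Disposing of this degenerate case---which is realised by genuinely empty cells, for instance $S=\{s_{1},s_{2}\}$ with a third site lying on the open segment $(s_{1},s_{2})$---is the delicate part of the argument.
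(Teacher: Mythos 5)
Your reduction of $\interior V_{T}(S)\neq\emptyset$ to the consistency of the strict system is correct (every normal $t-s$ is nonzero, and the interior of a finite intersection of halfspaces equals the intersection of their interiors), and your ``only if'' argument is fine. There is no detailed argument in the paper to compare against: the paper's proof is a one-line citation of the Slater-condition characterization in \cite{GLT96}, and your Motzkin/Gordan route is the natural way of unpacking that citation.

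The obstacle you isolate at the end, however, is not merely a delicate step: it is a genuine counterexample to the ``if'' direction as stated. Take the paper's own Example~\ref{eg:empty}: $s_{1}=(-1,0)$, $s_{2}=(1,0)$, $t=(0,0)$. The two vectors in question are $\bigl((1,0),-1\bigr)$ and $\bigl((-1,0),-1\bigr)$; their convex hull lies in the hyperplane where the last coordinate equals $-1$, so it does not contain $0_{3}$, and yet $V_{T}(S)=\emptyset$, hence $\interior V_{T}(S)=\emptyset$. As you observe, the theorem of the alternative only places some $(0_{n},c)$ with $c\leq 0$ in the convex hull, and the case $c<0$ is exactly the emptiness certificate of Theorem~\ref{thm:dualchar}; it cannot be promoted to $c=0$. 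The correct statement is therefore: $\interior V_{T}(S)\neq\emptyset$ iff $V_{T}(S)\neq\emptyset$ and $0_{n+1}$ is not in the convex hull (equivalently, iff the convex hull misses the whole ray $\{0_{n}\}\times(-\infty,0]$), which is precisely the amendment you propose. Where the paper later uses the theorem (Proposition~\ref{prop:cyclic}), the hypothesis $0\in V_{T}(S)$ is in force, so the corrected version suffices there. In short: your proof, completed with either of your amendments, is correct; the theorem as printed is not.
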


\begin{proof}
The proof comes from the well known characterization of the Slater condition
for a linear system of inequalities \cite[Theorem 3.1]{GLT96}.
\end{proof}

\begin{example}
\label{eg:singleton} Consider a system $T$ of four points in the plane,
\[
T=\{\left(  0,0\right)  ,\left(  1,1\right)  ,\left(  1,0\right)  ,\left(
0,1\right)  \},\quad S=\{\left(  0,0\right)  ,\left(  1,1\right)  \}.
\]
This is illustrated in Fig.~\ref{fig:singleton-minimal}. \begin{figure}[th]
\centering \includegraphics[width=0.35\textwidth]{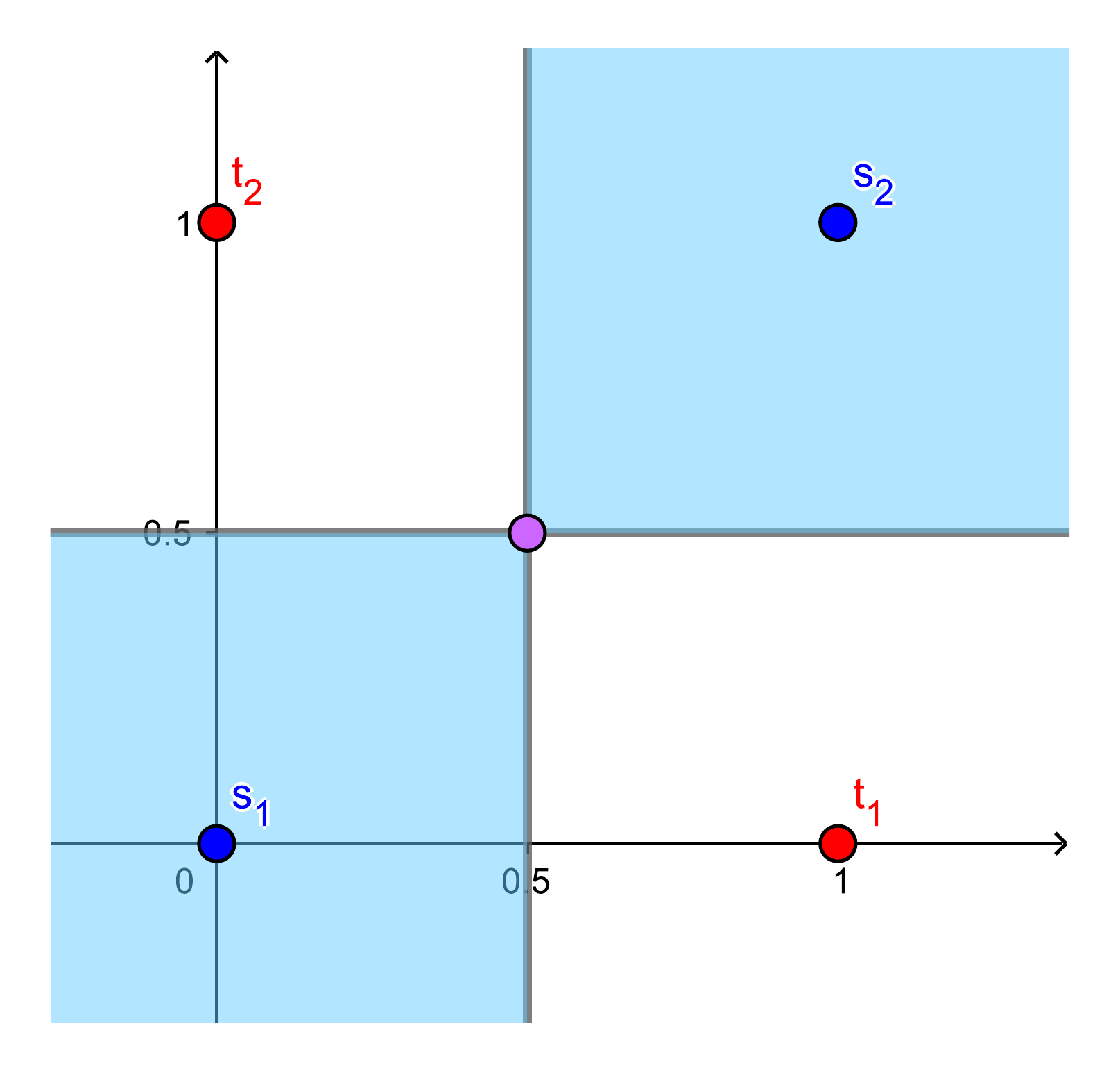}\caption{A
singleton cell (the intersection of the two shaded regions).}%
\label{fig:singleton-minimal}%
\end{figure}Using Theorems~\ref{thm:dualchar},~\ref{thm:boundedcell} and
\ref{thm:interior}, it is easy to check that $V_{T}\left(  S\right)  $ is
nonempty and bounded, but $\interior V_{T}\left(  S\right)  =\emptyset$.
Indeed, $V_{T}\left(  S\right)  =\left\{  \left(  \frac{1}{2},\frac{1}%
{2}\right)  \right\}  $.
\end{example}

The next statement is a specific characterization for a three-point system,
which we will use in what follows.

\begin{proposition}
Let $T,T^{\prime}\subset\R^{n}$ be such that $|T|=|T^{\prime}|=3$ and
$T$ and $T^{\prime}$ differ by exactly one point (i.e. $|T\cap T^{\prime}%
|=2$). Let $s\in T\setminus T^{\prime}$ and $s^{\prime}\in T^{\prime}\setminus
T$. If
\[
V_{T}(s)\subseteq V_{T^{\prime}}(s^{\prime}),
\]
then all points in the set $T^{\prime}\cup T$ belong to the same straight line.
\end{proposition}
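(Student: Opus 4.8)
The plan is to set up the two three-point configurations explicitly and extract geometric information from the inclusion $V_T(s)\subseteq V_{T'}(s')$. Write $T=\{s,a,b\}$ and $T'=\{s',a,b\}$, where $\{a,b\}=T\cap T'$ are the two shared points. The cell $V_T(s)=V_{\{s,a,b\}}(s)$ is, by Proposition~\ref{prop:inequalities}, the intersection of the two halfspaces $H_a=\{x:\langle a-s,x\rangle\le\frac12(\|a\|^2-\|s\|^2)\}$ and $H_b$ defined analogously; similarly $V_{T'}(s')=H_a'\cap H_b'$, with the primed halfspaces using $s'$ in place of $s$. The outward normals are $a-s,\ b-s$ for the first cell and $a-s',\ b-s'$ for the second. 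I would first rule out the degenerate possibility that $V_T(s)=\emptyset$: by Corollary~\ref{nonemptiness} this happens exactly when $s\in\conv\{a,b\}$, i.e. $s$ lies on the segment $[a,b]$, which already forces $s,a,b$ collinear and is a case I can handle separately (it will turn out to be compatible with the conclusion or to be excluded, to be checked).

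Assuming $V_T(s)\neq\emptyset$, the core idea is a normal-cone / recession-cone comparison. First I would look at recession cones: the inclusion of the cells forces $(V_T(s))_\infty\subseteq (V_{T'}(s'))_\infty$, and by the reasoning behind Theorem~\ref{thm:boundedcell} these recession cones are the polars of $\cone\{a-s,b-s\}$ and $\cone\{a-s',b-s'\}$ respectively. In the plane this is transparent: a nonempty intersection of two halfspaces is an (unbounded) wedge whose two bounding rays are orthogonal to the two normals, and a wedge is contained in another wedge only if the containing wedge is at least as wide and correctly oriented. The plan is to argue that because both cells share the \emph{same} two shared sites $a,b$, the only way to enlarge or reposition the wedge by moving $s$ to $s'$ without shrinking it out of containment is to keep the bounding hyperplanes parallel, i.e. to require $a-s$ parallel to $a-s'$ and $b-s$ parallel to $b-s'$ (as the wedge of $V_{T'}(s')$ must contain the wedge of $V_T(s)$, and by symmetry of the hypothesis one expects the directions to be forced to match).

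Concretely, the perpendicular bisector of $\{s,a\}$ bounds $H_a$ and the perpendicular bisector of $\{s',a\}$ bounds $H_a'$; the containment $V_T(s)\subseteq V_{T'}(s')$ should force these two bisectors to be parallel, hence $a-s\parallel a-s'$, meaning $s$, $s'$, and $a$ are collinear. Running the same argument with $b$ gives $s,s',b$ collinear. If $s\neq s'$, the line through $s$ and $s'$ then contains both $a$ and $b$, so all four (hence all five, with $s'$) points lie on one line, which is the desired conclusion. The parallelism extraction is where I would be most careful: I would compare, for each shared site, the bounding hyperplane of the smaller cell against that of the larger, using that a face of $V_T(s)$ (one of its two bounding rays) must lie inside $V_{T'}(s')$, and push a point of that face to infinity along the ray to convert the halfspace inclusion into an inclusion of normal directions.

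The step I expect to be the main obstacle is making the ``parallel normals'' deduction fully rigorous rather than merely plausible from the planar picture, and in particular handling it in general $\R^n$ rather than only $\R^2$. In higher dimensions the cells are intersections of only two halfspaces, so they are genuinely high-dimensional slabs/wedges, and the clean ``wedge inside wedge'' intuition needs to be replaced by a precise statement: from $V_T(s)\subseteq V_{T'}(s')$ I must deduce that each defining inequality of the larger cell is a consequence (a nonnegative combination, via a Farkas-type argument as in Theorem~\ref{thm:dualchar}) of the defining inequalities of the smaller cell together with the trivial inequality, and then extract the collinearity from the structure of those multipliers. I would therefore fall back on Farkas' lemma / the consequence-relation machinery already invoked for Theorem~\ref{thm:dualchar} to certify which normal directions of $V_{T'}(s')$ are nonnegative combinations of $a-s$ and $b-s$, and argue that the linear-independence generic case forces the multipliers to pin $a-s'$ and $b-s'$ into the cone spanned by $a-s,b-s$ in a way that is only consistent with the claimed collinearity; the handling of the low-dimensional and degenerate subcases (e.g. $a-s\parallel b-s$, or $s$ on the segment $[a,b]$) is where the bookkeeping will be heaviest.
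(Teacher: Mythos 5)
Your proposal correctly locates the right machinery---translate the inclusion $V_T(s)\subseteq V_{T'}(s')$ into the statement that each of the two inequalities defining $V_{T'}(s')$ is a nonnegative-combination consequence of the two inequalities defining $V_T(s)$---and this is exactly the route the paper takes. But you stop precisely at the step that does all the work, and the geometric shortcut you offer in its place does not hold up. The claim that the containment ``should force'' the perpendicular bisector of $\{s,a\}$ to be parallel to that of $\{s',a\}$ is not a valid principle for wedge (or slab) containment: a narrow wedge sits inside a wide wedge whose bounding hyperplanes point in entirely different directions, and nothing in the homogeneous data alone pins corresponding normals into parallel pairs. Indeed, the vector half of the Farkas certificate is satisfiable by non-collinear configurations: with $s$ at the origin, $a=(1,0)$, $b=(0,1)$, $s'=(-1,-1)$ one has $a-s'=2a+b$ and $b-s'=a+2b$, both in $\cone\{a-s,b-s\}$. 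What kills such configurations is the \emph{scalar} half of the certificate, i.e.\ the right-hand sides $\frac12(\Vert t\Vert^2-\Vert s\Vert^2)$, and your plan never engages with them beyond saying the multipliers should ``pin'' things down.

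Concretely, the missing computation (the paper's actual argument, after translating $s$ to the origin so the shared points are $t_1,t_2$ and $s'$ plays the role of the moved site) is: the two consequence relations give two representations $s'=(1-\lambda_{ii})t_i-\lambda_{i(3-i)}t_{3-i}$, $i=1,2$, together with $\Vert s'\Vert^2\leq(1-\lambda_{ii})\Vert t_i\Vert^2-\lambda_{i(3-i)}\Vert t_{3-i}\Vert^2$; subtracting the two vector representations and assuming $t_1,t_2$ linearly independent forces $\lambda_{11}-\lambda_{21}=1$ and $\lambda_{22}-\lambda_{12}=1$, and substituting into the scalar inequality yields $0\leq\Vert s'\Vert^2\leq-\lambda_{21}\Vert t_1\Vert^2-\lambda_{12}\Vert t_2\Vert^2\leq 0$, contradicting $s'\neq s$. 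Hence $t_1-s$ and $t_2-s$ must be dependent, and the vector representation of $s'$ then places it on the same line. Without this chain your argument does not close. Two smaller points: your appeal to Corollary~\ref{nonemptiness} to characterize when $V_T(s)=\emptyset$ misreads that corollary (it concerns $V_T(T\setminus\{t\})$); a one-point cell $V_T(\{s\})$ always contains $s$ and is never empty, so that case split is vacuous. And the recession-cone inclusion you propose only recovers the homogeneous part of the certificate, which, as the example above shows, is strictly weaker than what is needed.
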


\begin{proof}
For notational convenience we will prove the result for $T=\{t_{1},t_{2},0\}$
and $T^{\prime }=\{t_{1},t_{2},s\}$, where the points $t_{1},t_{2},s$ are
all nonzero and pairwise distinct. Let
\begin{equation*}
F:=V_{T}(0)=\left\{ x\in \R^{n}\;:\;\langle t_{j},x\rangle \leq \frac{1}{2}%
\left\Vert t_{j}\right\Vert ^{2},\;j=1,2\right\} ,
\end{equation*}%
\begin{equation*}
F^{\prime }:=V_{T^{\prime }}(s)=\left\{ x\in \R^{n}\;:\;\langle
t_{j}-s,x\rangle \leq \frac{1}{2}\left( \left\Vert t_{j}\right\Vert
^{2}-\Vert s\Vert ^{2}\right) ,\;j=1,2\right\} .
\end{equation*}%
The two inequalities defining $F^{\prime }$ are consequence relations of the
inequalities defining $F$. Therefore there exist $\lambda _{ij}\geq 0,$ $%
i,j=1,2$ such that
\begin{equation}
t_{i}-s=\sum_{j=1}^{2}\lambda _{ij}t_{j},\text{ }i=1,2\text{ and}
\label{eq:obs}
\end{equation}%
\begin{equation*}
\left\Vert t_{i}\right\Vert ^{2}-\left\Vert s\right\Vert ^{2}\geq
\sum_{j=1}^{2}\lambda _{ij}\left\Vert t_{j}\right\Vert ^{2},\text{ }i=1,2.
\end{equation*}%
Hence
\begin{equation}
s=\left( 1-\lambda _{ii}\right) t_{i}-\lambda _{i(3-i)}t_{3-i},\text{ }i=1,2%
\text{ and}  \label{eq:reprs}
\end{equation}%
\begin{equation}
\left\Vert s\right\Vert ^{2}\leq \left( 1-\lambda _{ii}\right) \left\Vert
t_{i}\right\Vert ^{2}-\lambda _{i(3-i)}\left\Vert t_{3-i}\right\Vert ^{2},%
\text{ }i=1,2.\text{ }  \label{eq:reprns}
\end{equation}%
We can subtract the two representations \eqref{eq:reprs} of $s$ to obtain
\begin{equation*}
\left( 1-\lambda _{11}+\lambda _{21}\right) t_{1}+\left( -1-\lambda
_{12}+\lambda _{22}\right) t_{2}=0_{n}.
\end{equation*}%
If $t_{j},$ $j=1,2$ are linearly independent, we have
\begin{equation*}
1=\lambda _{11}-\lambda _{21}\text{ and }1=\lambda _{22}-\lambda _{12}.
\end{equation*}%
Together with \eqref{eq:reprns} this yields
\begin{equation*}
0\leq \left\Vert s\right\Vert ^{2}\leq \left( 1-\lambda _{11}\right)
\left\Vert t_{1}\right\Vert ^{2}-\lambda _{12}\left\Vert t_{2}\right\Vert
^{2}=-\lambda _{21}\left\Vert t_{1}\right\Vert ^{2}-\lambda _{12}\left\Vert
t_{2}\right\Vert ^{2}\leq 0,
\end{equation*}%
which contradicts the condition $s\neq 0$. Therefore $t_{j},$ $j=1,2$ are
linearly dependent. Together with \eqref{eq:obs} this finishes the proof.
\end{proof}

\begin{remark}
\label{rem:enlarge} This proposition means that it is impossible to enlarge a
Voronoi cell of a single point in a three-point affinely independent system by
moving this point.
\end{remark}

\begin{proposition}
\label{prop:union}Let $T$ be a finite subset of $\R^{n}$, and let $S$
be a nonempty proper subset of $T$. If $\left\vert S\right\vert \geq2,$ then
\begin{equation}
V_{T}(S)=\bigcup\limits_{s\in S}\left[  V_{T\setminus\{s\}}(S\setminus
\{s\})\cap V_{S}(s)\right]  .\label{eq:intersect}%
\end{equation}

\end{proposition}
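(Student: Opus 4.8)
The plan is to establish the set equality \eqref{eq:intersect} by proving the two inclusions separately, exploiting the min--max structure of the definition of $V_T(S)$. The central observation is that for any fixed $s\in S$ the condition $x\in V_S(s)$ means precisely that $s$ realizes the minimum of $\dist(\cdot,x)$ over $S$, so that dropping $s$ does not alter the maximum: if $\dist(s,x)\le\dist(s',x)$ for all $s'\in S\setminus\{s\}$, then $\max_{s''\in S}\dist(s'',x)=\max_{s'\in S\setminus\{s\}}\dist(s',x)$. Here the hypothesis $|S|\ge 2$ is exactly what guarantees that $S\setminus\{s\}$ is nonempty, so that both the inner cell $V_{T\setminus\{s\}}(S\setminus\{s\})$ and the maximum over $S\setminus\{s\}$ are well defined; I would also record at the outset that the complement of $S\setminus\{s\}$ within $T\setminus\{s\}$ is exactly $T\setminus S$, which keeps the ``sites vs.\ complement'' roles straight in every nested cell.

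For the inclusion $\supseteq$, I would take $x$ in the right-hand side, so that $x\in V_{T\setminus\{s\}}(S\setminus\{s\})\cap V_S(s)$ for some $s\in S$. Unwinding the definitions, membership in $V_S(s)$ yields $\dist(s,x)\le\dist(s',x)$ for all $s'\in S\setminus\{s\}$, hence $\max_{s''\in S}\dist(s'',x)=\max_{s'\in S\setminus\{s\}}\dist(s',x)$ by the observation above, while membership in $V_{T\setminus\{s\}}(S\setminus\{s\})$ gives $\max_{s'\in S\setminus\{s\}}\dist(s',x)\le\min_{t\in T\setminus S}\dist(t,x)$. Chaining these two relations produces $\max_{s''\in S}\dist(s'',x)\le\min_{t\in T\setminus S}\dist(t,x)$, which is precisely the defining condition for $x\in V_T(S)$.

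For the reverse inclusion $\subseteq$, I would take $x\in V_T(S)$ and choose $s^{*}\in S$ attaining $\min_{s\in S}\dist(s,x)$. Then $x\in V_S(s^{*})$ holds by the very choice of $s^{*}$. To verify the remaining factor, I note that since $S\setminus\{s^{*}\}\subseteq S$ the maximum over the smaller index set can only decrease, so $\max_{s'\in S\setminus\{s^{*}\}}\dist(s',x)\le\max_{s''\in S}\dist(s'',x)\le\min_{t\in T\setminus S}\dist(t,x)$, where the last inequality is the defining property of $V_T(S)$; this is exactly the statement that $x\in V_{T\setminus\{s^{*}\}}(S\setminus\{s^{*}\})$. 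Hence $x$ lies in the term of the union indexed by $s^{*}$, completing the inclusion.

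I expect no substantive obstacle, as the argument is a direct double inclusion driven by the elementary fact that omitting the nearest site of $S$ does not change $\max_{s\in S}\dist(s,x)$. The only point demanding care is the bookkeeping of roles in the nested cells---confirming that the complement of $S\setminus\{s\}$ in $T\setminus\{s\}$ is genuinely $T\setminus S$, and that $|S|\ge 2$ is used precisely to ensure $S\setminus\{s\}$ stays nonempty so that the inner cells and the maxima over $S\setminus\{s\}$ are all well posed.
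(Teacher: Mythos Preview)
Your proof is correct and follows essentially the same approach as the paper: both argue by double inclusion, using for the forward direction that $x\in V_T(S)$ lies in the term indexed by a nearest $s^{*}\in S$, and for the reverse direction a direct unwinding of the definitions. Your write-up is in fact a bit more explicit than the paper's (which dispatches the $\supseteq$ inclusion with ``Evidently''), and your bookkeeping remark that $(T\setminus\{s\})\setminus(S\setminus\{s\})=T\setminus S$ is a useful clarification.
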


\begin{proof}
Denote
\begin{equation*}
A_{s}:=V_{T\setminus\{s\}}(S\setminus\{s\})\cap V_{S}(s).
\end{equation*}
Observe that for any $\bar{s}\in S$ we have
\begin{equation*}
A_{\bar{s}}=\{x\in\R^{n}\,:\,\dist (x,\bar{s})\leq \dist (x,s)\leq \dist %
(x,t)\quad\forall s\in S,\;t\in T\setminus S\}.
\end{equation*}
Evidently, $A_{s}\subseteq V_{T}(S)$ for every $s\in S$, hence, $%
\bigcup\limits_{s\in S}A_{s}\subseteq V_{T}(S)$. To prove the reverse
inclusion, assume that $x\in V_{T}(S)$. Let $\bar{s}$ be a closest point to $%
x$ in $S$. It is evident that $x\in V_{S}(\bar{s})$. At the same time, it is
not difficult to observe that $V_{T}(S)\subseteq V_{T\setminus{\{\bar{s}\}}%
}(S\setminus\{\bar{s}\})$. Hence
\begin{equation*}
x\in V_{T\setminus{\{\bar{s}\}}}(S\setminus\{\bar{s}\})\cap V_{S}(\bar {s}%
)=A_{\bar{s}},
\end{equation*}
and therefore $V_{T}(S)\subseteq\bigcup\limits_{s\in S}A_{s}$.
\end{proof}


\begin{proposition}
\label{prop:noflats}Let $T$ be a finite subset of $\R^{n}$, and let
$S$ be a nonempty and proper subset of $T$. If there exist $s_{1},s_{2}\in S$ and
$t_{1},t_{2}\in T\setminus S$ such that the inequalities
\begin{equation}
\Vert s_{1}-x\Vert\leq\Vert t_{1}-x\Vert\quad\text{and}\quad\Vert s_{2}%
-x\Vert\leq\Vert t_{2}-x\Vert\label{eq:twoineq}%
\end{equation}
define the same halfspace, then these inequalities are nonessential for
$V_{T}(S)$, i.e. they can be dropped from the system \eqref{eq:linrep}.
\end{proposition}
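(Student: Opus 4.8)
The plan is to exploit the two ``cross'' pairs $(s_1,t_2)$ and $(s_2,t_1)$. Because $s_1,s_2\in S$ and $t_1,t_2\in T\setminus S$, these index inequalities that already occur in the representation \eqref{eq:linrep}, and they are \emph{not} among the two we wish to discard; I will show that their conjunction forces the common halfspace defined by the two given inequalities, so that the latter are redundant.

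First I would record the squared-distance form of each relation. Writing $f_{s,t}(x):=\tfrac12\big(\|s-x\|^2-\|t-x\|^2\big)$, the halfspace $\{x:\|s-x\|\le\|t-x\|\}$ is exactly $\{x:f_{s,t}(x)\le0\}$, and expanding the squares gives $f_{s,t}(x)=\langle t-s,x\rangle-\tfrac12(\|t\|^2-\|s\|^2)$, which is precisely the inequality indexed by $(s,t)$ in \eqref{eq:linrep}. The crux is the elementary identity
\[
f_{s_1,t_1}(x)+f_{s_2,t_2}(x)=f_{s_1,t_2}(x)+f_{s_2,t_1}(x),
\]
valid for every $x$ since both sides equal $\tfrac12\big(\|s_1-x\|^2+\|s_2-x\|^2-\|t_1-x\|^2-\|t_2-x\|^2\big)$.

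Next I would translate the hypothesis that the two inequalities define the same halfspace. Each $f_{s_i,t_i}$ has nonzero gradient $t_i-s_i$ (as $s_i\neq t_i$), so it defines a genuine halfspace, and two such halfspaces coincide exactly when $f_{s_2,t_2}=\lambda f_{s_1,t_1}$ for some $\lambda>0$; hence $f_{s_1,t_1}+f_{s_2,t_2}=(1+\lambda)f_{s_1,t_1}$ with $1+\lambda>0$. Feeding this into the identity, any $x$ satisfying the cross-pair inequalities $f_{s_1,t_2}(x)\le0$ and $f_{s_2,t_1}(x)\le0$ obeys $(1+\lambda)f_{s_1,t_1}(x)=f_{s_1,t_2}(x)+f_{s_2,t_1}(x)\le0$, whence $f_{s_1,t_1}(x)\le0$. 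Thus the set $W$ obtained from \eqref{eq:linrep} by deleting the two inequalities satisfies $W\subseteq\{f_{s_1,t_2}\le0\}\cap\{f_{s_2,t_1}\le0\}\subseteq\{f_{s_1,t_1}\le0\}=\{f_{s_2,t_2}\le0\}$, so $W$ already lies in the common halfspace and $W=V_T(S)$; the two inequalities are indeed nonessential.

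The one delicate point — and the place where I expect the argument needs care — is to confirm that the two cross pairs are legitimate constraints of the system and are distinct from the discarded ones. Validity is immediate since $s_1,s_2\in S$ and $t_1,t_2\in T\setminus S$. For distinctness I would observe that the interesting situation has two genuinely different inequalities: if $s_1=s_2$, then equality of the halfspaces forces the bisecting hyperplanes of $[s,t_1]$ and $[s,t_2]$ to coincide with matching orientation, which yields $\|t_1-s\|=\|t_2-s\|$ and hence $t_1=t_2$, collapsing the two inequalities into one; symmetrically $t_1=t_2$ forces $s_1=s_2$. So in the nontrivial case $s_1\neq s_2$ and $t_1\neq t_2$, the cross pairs differ from each other and from both discarded pairs, and the redundancy argument above applies.
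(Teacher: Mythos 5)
Your proof is correct and follows essentially the same route as the paper's: both derive the common halfspace by summing the two cross-pair inequalities indexed by $(s_1,t_2)$ and $(s_2,t_1)$ and observing that, under the hypothesis $t_2-s_2=\alpha(t_1-s_1)$ with matching right-hand sides, this sum is the positive multiple $(1+\alpha)$ of the shared constraint. Your additional verification that the cross pairs are genuinely distinct from the two discarded inequalities (so the derivation is not circular) is a point the paper leaves implicit, but it does not change the substance of the argument.
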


\begin{proof}
It is evident from the equivalence (\ref{Euclidean distance function}) that
these inequalities can be written as
\begin{equation}
\langle t_{1}-s_{1},x\rangle \leq \frac{1}{2}\left( \Vert t_{1}\Vert
^{2}-\Vert s_{1}\Vert ^{2}\right) ,\quad \langle t_{2}-s_{2},x\rangle \leq
\frac{1}{2}\left( \Vert t_{2}\Vert ^{2}-\Vert s_{2}\Vert ^{2}\right) .
\label{eq:ineqrewritten}
\end{equation}%
If both inequalities in \eqref{eq:twoineq} define the same halfspace, then
it follows from \eqref{eq:ineqrewritten} that there exists $\alpha >0$ such
that
\begin{equation}
t_{2}-s_{2}=\alpha (t_{1}-s_{1})\quad \text{and}\quad \frac{1}{2}\left(
\Vert t_{2}\Vert ^{2}-\Vert s_{2}\Vert ^{2}\right) =\alpha \frac{1}{2}\left(
\Vert t_{1}\Vert ^{2}-\Vert s_{1}\Vert ^{2}\right) .  \label{eq:lindep}
\end{equation}%
Every point $x\in V_{T}(S)$ also satisfies the system
\begin{equation}
\langle t_{1}-s_{2},x\rangle \leq \frac{1}{2}\left( \Vert t_{1}\Vert
^{2}-\Vert s_{2}\Vert ^{2}\right) ,\quad \langle t_{2}-s_{1},x\rangle \leq
\frac{1}{2}\left( \Vert t_{2}\Vert ^{2}-\Vert s_{1}\Vert ^{2}\right) .
\label{eq:essential}
\end{equation}%
Adding these inequalities together we obtain
\begin{equation*}
\langle t_{1}-s_{2}+t_{2}-s_{1},x\rangle \leq \frac{1}{2}\left( \Vert
t_{1}\Vert ^{2}-\Vert s_{2}\Vert ^{2}+\Vert t_{2}\Vert ^{2}-\Vert s_{1}\Vert
^{2}\right) ,
\end{equation*}%
and using \eqref{eq:lindep} we have a consequence of \eqref{eq:essential},
\begin{equation*}
(1+\alpha )\langle t_{1}-s_{1},x\rangle \leq (1+\alpha )\frac{1}{2}\left(
\Vert t_{1}\Vert ^{2}-\Vert s_{1}\Vert ^{2}\right) ,
\end{equation*}%
which defines the same halfspace as \eqref{eq:ineqrewritten}.
\end{proof}

\begin{theorem}
\label{thm:facets}Let $T\subset\R^{n}$ be a finite set, let
$S:=\{s_{1},s_{2}\}\subset T$ be a two-point set$,$ and let%
\begin{align*}
H  &  :=\{x\in\R^{n}\,:\,\Vert x-s_{1}\Vert=\Vert x-s_{2}\Vert\}\\
&  =\{x\in\R^{n}\,:\,\langle s_{1}-s_{2},x\rangle=\frac{1}{2}(\Vert
s_{1}\Vert^{2}-\Vert s_{2}\Vert^{2})\}.
\end{align*}
If $\interior V_{T}(S)\neq\emptyset$, then $H\cap\ri F=\emptyset$ for every
facet $F$ of $V_{T}(S)$.
\end{theorem}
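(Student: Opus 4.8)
The plan is to argue by contradiction, using the hypothesis $\interior V_{T}(S)\neq\emptyset$ to guarantee that $V_{T}(S)$ is full-dimensional, so that each facet $F$ is genuinely $(n-1)$-dimensional and its affine hull is an honest hyperplane. Because every defining inequality in \eqref{eq:linrep} is a perpendicular-bisector condition $\dist(x,s)\le\dist(x,t)$ with $s\in S$ and $t\in T\setminus S$, the affine hull of any facet $F$ must coincide with the boundary of one of these halfspaces. After relabelling $s_{1},s_{2}$ if necessary, I would therefore write $\operatorname{aff}F=H_{1}$, where $H_{1}:=\{x:\dist(x,s_{1})=\dist(x,t_{0})\}$ is the bisector of $s_{1}$ and some $t_{0}\in T\setminus S$.

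Now suppose, for contradiction, that there is a point $x\in\ri F$ with $x\in H$. From $x\in F\subseteq H_{1}$ we obtain $\dist(x,s_{1})=\dist(x,t_{0})$, while $x\in H$ gives $\dist(x,s_{1})=\dist(x,s_{2})$; together these force $\dist(x,s_{2})=\dist(x,t_{0})$, i.e. $x\in H_{2}:=\{x:\dist(x,s_{2})=\dist(x,t_{0})\}$. The closed halfspace bounded by $H_{2}$ is exactly the constraint attached to the pair $(s_{2},t_{0})$ in \eqref{eq:linrep}, so $V_{T}(S)$ lies in it and $H_{2}$ is a supporting hyperplane of $V_{T}(S)$; hence $F\cap H_{2}$ is a face of $F$ and $x\in F\cap H_{2}$.

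The decisive step is to check that $H_{1}\neq H_{2}$. This holds whenever $s_{1}\neq s_{2}$: a perpendicular bisector together with one of its defining endpoints determines the other endpoint as its mirror image, so two bisectors sharing the endpoint $t_{0}$ can coincide only if their remaining endpoints coincide. (In the degenerate configuration where $s_{1},s_{2},t_{0}$ are collinear this says merely that $H_{1}$ and $H_{2}$ are parallel and distinct, so $F\cap H_{2}=\emptyset$.) Since $\operatorname{aff}F=H_{1}\neq H_{2}$, we have $F\not\subseteq H_{2}$, so $F\cap H_{2}$ is a \emph{proper} face of $F$; as the relative interior of a convex set meets none of its proper faces, $x\in\ri F$ cannot lie in $F\cap H_{2}$, a contradiction. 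This yields $H\cap\ri F=\emptyset$.

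I expect the main obstacle to be the opening reduction rather than the final contradiction: one must justify carefully that, \emph{because} $\interior V_{T}(S)\neq\emptyset$, the affine hull of every facet is literally the boundary hyperplane of one of the inequalities in \eqref{eq:linrep} (and that any inequalities sharing that boundary share its orientation, which full-dimensionality guarantees). This is precisely where the hypothesis is used; everything after it is elementary incidence geometry of perpendicular bisectors.
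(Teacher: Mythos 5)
Your proof is correct and follows essentially the same route as the paper's: both add the equation of $H$ to the active bisector equation at a point of $\ri F$ to conclude that the bisector of $s_{2}$ and $t_{0}$ also passes through that point, and both then exploit the fact that a relative-interior point of a facet of a full-dimensional $V_{T}(S)$ can lie on only one of the constraint hyperplanes. The only real difference is in how the coincidence of the two bisectors is excluded: the paper derives that $t_{0},s_{1},s_{2}$ would have to be simultaneously collinear and orthogonally placed, whereas you observe directly that a perpendicular bisector together with one endpoint determines the other endpoint by reflection, a cleaner justification of the same sub-claim.
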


\begin{proof}
Let $F$ be the facet of $V_{T}(S)$ defined by the linear equation
corresponding to some point $t_{0}\in T$ and, say, $s_{1},$ that is, $%
\langle t_{0}-s_{1},x\rangle =\frac{1}{2}\left( \Vert t_{0}\Vert ^{2}-\Vert
s_{1}\Vert ^{2}\right) ,$ and assume, towards a cotradiction, that $H\cap
\ri
F\neq \emptyset .$ Take $\overline{x}\in H\cap \ri F.$ We then have
\begin{equation*}
\langle t_{0}-s_{1},\overline{x}\rangle =\frac{1}{2}(\Vert t_{0}\Vert
^{2}-\Vert s_{1}\Vert ^{2}).
\end{equation*}%
Adding the equality%
\begin{equation*}
\langle s_{1}-s_{2},\overline{x}\rangle =\frac{1}{2}(\Vert s_{1}\Vert
^{2}-\Vert s_{2}\Vert ^{2}),
\end{equation*}%
which follows from the fact that $\overline{x}\in H,$ to the preceding one,
we get%
\begin{equation*}
\langle t_{0}-s_{2},\overline{x}\rangle =\frac{1}{2}(\Vert t_{0}\Vert
^{2}-\Vert s_{2}\Vert ^{2}).
\end{equation*}%
Since $\interior V_{T}(S)\neq \emptyset $ and $\overline{x}\in \ri F$, there
is exactly one halfspace among those defined by the inequalities %
\eqref{eq:linrep} such that $\overline{x}$ belongs to its boundary
hyperplane (and hence to the interior of the remaining halfspaces). Hence
the linear equalities
\begin{equation}
\langle t_{0}-s_{1},x\rangle =\frac{1}{2}(\Vert t_{0}\Vert ^{2}-\Vert
s_{1}\Vert ^{2})  \label{eq:sp1}
\end{equation}%
and%
\begin{equation}
\langle t_{0}-s_{2},x\rangle =\frac{1}{2}(\Vert t_{0}\Vert ^{2}-\Vert
s_{2}\Vert ^{2})  \label{eq:sp2}
\end{equation}%
define the same hyperplane, which implies the existence of a real number $%
\lambda $ such that $t_{0}-s_{2}=\lambda \left( t_{0}-s_{1}\right) ,$ so
that the points $t_{0},s_{1},s_{2}$ are colinear. Substituting $x:=\frac{%
t_{0}+s_{1}}{2},$ which is a solution of \eqref{eq:sp1}, into \eqref{eq:sp2}
we have, after elementary algebraic manipulation,
\begin{equation*}
\langle t_{0}-s_{2},s_{1}-s_{2}\rangle =0,
\end{equation*}%
meaning that $s_{1}-s_{2}$ must be orthogonal to $t_{0}-s_{2}$. This,
together with the colinearity of $t_{0},s_{1}$ and $s_{2}$ and the fact that
these three points ar distinct, yields a contradiction.
\end{proof}

\section{Case Study}

\label{sec:specialcases}In this section we study the special case of higher
order cells on no more than four sites. With the exception of subsections
\ref{Halfspaces}-\ref{Wedges}, our study will be developed for sets in
$\R^{2}.$ For every set $F\subset\R^{2}$ defined by four
linear inequalities, we will determine whether or not there exist sets
$S\subset T\subset\R^{2},$ with $|S|=2$ and $|T|=4,$ such that
$V_{T}(S)=F.$ In the cases when the answer will be affirmative, we will
construct the (possibly non necessarity unique) sets $S$ and $T$ explicitly.

\subsection{Singletons}

A singleton (zero-dimensional) cell $\{c\}$ can be obtained by placing the
pairs of points $(s_{1},s_{2})$ and $(t_{1},t_{2})$ in the opposite corners of
a square centred at $c$. This was already discussed in
Example~\ref{eg:singleton}. Note that this is a minimal representation, since
we need at least three inequalities to obtain a bounded cell
(cf.~Theorem~\ref{thm:cardbound}), and hence $|T|\geq4$. In fact the square
can be replaced by a rectangle or a general \emph{cyclic} quadrilateral (we
recall that a quadrilateral is said to be cyclic if all of its vertices are on
a single circle, which is equivalent to the fact that the sum of
opposite angles equals $\pi$).

\begin{proposition}
\label{prop:cyclic} Let $S\subset T\subset\R^{2},$ with $|S|=2$ and
$|T|=4.$ The following statements are equivalent:

\begin{itemize}
\item[a)] $V_{T}(S)$ is nonempty and at most one-dimensional.

\item[b)] The points of $T$ are the vertices of a cyclic quadrilateral, with
the two sites of $S$ located opposite to each other (across a diagonal).

\item[c)] $V_{T}(S)$ is a singleton.
\end{itemize}
\end{proposition}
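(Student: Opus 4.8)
The plan is to establish the cyclic chain of implications $c)\Rightarrow a)\Rightarrow b)\Rightarrow c)$. The implication $c)\Rightarrow a)$ is immediate, since a singleton is nonempty and zero-dimensional. The substance lies in $a)\Rightarrow b)$, and the key device is the lifting $\hat{p}:=(p,\Vert p\Vert^{2})\in\R^{3}$, under which the normal vectors appearing in Theorems~\ref{thm:dualchar} and~\ref{thm:interior} become the differences $v_{ij}:=\hat{t}_{j}-\hat{s}_{i}$, where $S=\{s_{1},s_{2}\}$ and $T\setminus S=\{t_{1},t_{2}\}$. First I would record the identity $v_{11}+v_{22}=v_{12}+v_{21}$, which holds because both sides equal $\hat{t}_{1}+\hat{t}_{2}-\hat{s}_{1}-\hat{s}_{2}$; thus the four vectors $v_{ij}$ are the vertices of a (possibly degenerate) parallelogram and in particular are coplanar.

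To prove $a)\Rightarrow b)$, assume $V_{T}(S)$ is nonempty and at most one-dimensional, so that $\interior V_{T}(S)=\emptyset$. By Theorem~\ref{thm:interior} this gives $0_{n+1}\in\conv\{v_{ij}\}$, hence $0_{n+1}$ lies in the affine hull of the $v_{ij}$. Using the parallelogram identity, I would show that membership of the origin in this affine hull is exactly an affine dependence among the lifts $\hat{s}_{1},\hat{s}_{2},\hat{t}_{1},\hat{t}_{2}$ with coefficients summing to zero, i.e.\ that these four lifts are coplanar; by the standard paraboloid argument this means the four sites are concyclic or collinear. The collinear case is disposed of separately: there all the normals $t_{j}-s_{i}$ are parallel, so $V_{T}(S)$ is a slab $\{x:\langle u,x\rangle\in I\}$, which is empty or two-dimensional unless $I$ is a single point, and an elementary computation shows the latter forces two of the four sites to coincide. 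Hence the four points lie on a genuine circle, with centre $c$ and radius $R$.

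Once concyclicity is in hand, I would translate so that $c=0_{2}$; then $\Vert s_{i}\Vert=\Vert t_{j}\Vert=R$, the defining inequalities of~\eqref{eq:linrep} collapse to $\langle t_{j}-s_{i},x\rangle\leq 0$, and $V_{T}(S)=(\cone\{t_{j}-s_{i}\})^{\circ}$ is a closed convex cone with apex $c$. The decisive step is to locate the origin relative to the planar parallelogram $\conv\{t_{j}-s_{i}\}$. Because $0\in\conv\{t_{j}-s_{i}\}$ (the projected form of $0_{n+1}\in\conv\{v_{ij}\}$) I must rule out the origin lying on the boundary of this parallelogram: each of its four edges being hit by the origin would force one site into the open segment spanned by two others, e.g.\ $s_{1}\in(t_{1},t_{2})$ or $t_{2}\in(s_{1},s_{2})$, and a vertex being the origin would force two sites to coincide. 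Since the relative interior of any chord lies strictly inside the circle, none of these can happen for four distinct concyclic points. Therefore $0\in\interior\conv\{t_{j}-s_{i}\}$, whence $\cone\{t_{j}-s_{i}\}=\R^{2}$ and, by Theorem~\ref{thm:boundedcell}, $V_{T}(S)$ is bounded. Boundedness together with Proposition~\ref{prop:intersectfour} gives that $(s_{1},s_{2})$ and $(t_{1},t_{2})$ cross, so $s_{1},s_{2}$ are opposite vertices of the cyclic quadrilateral: this is $b)$.

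Finally, for $b)\Rightarrow c)$ I would reuse the same cone description: a cyclic quadrilateral with centre $c$ makes all four perpendicular bisectors pass through $c$, so $c\in V_{T}(S)$ and $V_{T}(S)=c+(\cone\{t_{j}-s_{i}\})^{\circ}$; the hypothesis that $s_{1},s_{2}$ are opposite means the diagonals $(s_{1},s_{2})$ and $(t_{1},t_{2})$ meet in a single point, so by Proposition~\ref{prop:intersectfour} the cell is bounded, and a bounded cone is $\{c\}$. The main obstacle throughout is the heart of $a)\Rightarrow b)$, namely excluding a nonempty one-dimensional cell; this is exactly the statement that the origin cannot sit on the boundary of the parallelogram $\conv\{t_{j}-s_{i}\}$, and it is here that concyclicity, via the chord-interior fact, does the essential work. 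I would take care over the degenerate subcases---collinear sites and a degenerate (segment) parallelogram---since these are where the clean ``interior of the parallelogram'' language needs a separate, if routine, justification.
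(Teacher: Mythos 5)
Your proposal is correct, and while the implications c)~$\Rightarrow$~a) and b)~$\Rightarrow$~c) coincide with the paper's (trivial observation, and the cone-plus-Proposition~\ref{prop:intersectfour} argument, respectively), your route to the key implication a)~$\Rightarrow$~b) is genuinely different. The paper works directly with the convex-combination certificate supplied by Theorem~\ref{thm:interior}: it orders the norms $\Vert s_2\Vert\le\Vert s_1\Vert\le\Vert t_1\Vert\le\Vert t_2\Vert$ and uses strict convexity of $\Vert\cdot\Vert^2$ to force all four norms to be equal (each strict inequality kills some coefficients $\lambda_{ij}$ and drives a site into a segment spanned by two others, contradicting distinctness or Corollary~\ref{prop:emptycell}), after which ``opposite'' is read off from the first equation of the certificate as $[s_1,s_2]\cap[t_1,t_2]\ne\emptyset$. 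You instead pass through the paraboloid lift, turning the same certificate into an affine dependence of $\hat s_1,\hat s_2,\hat t_1,\hat t_2$, hence coplanarity, hence concyclic-or-collinear; you then locate the origin inside the parallelogram $\conv\{t_j-s_i\}$ to get boundedness via Theorem~\ref{thm:boundedcell} and invoke Proposition~\ref{prop:intersectfour} for ``opposite''. Your version is more conceptual---it explains \emph{why} circles appear---at the price of two degenerate subcases you correctly flag but leave as routine. Both do close: in the collinear case the one-dimensional-slab computation yields either a coincidence of sites or an outright empty cell (your phrasing mentions only the former, but the other branch also terminates in a contradiction); and the degenerate-parallelogram case is in fact subsumed by your edge argument, since the union of the four combinatorial edges $[w_{11},w_{12}]$, $[w_{12},w_{22}]$, $[w_{22},w_{21}]$, $[w_{21},w_{11}]$ equals $\conv\{w_{ij}\}$ even when the $w_{ij}$ are collinear, so the origin still lands on an edge and forces a site onto a chord. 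The paper's argument avoids these case splits and is shorter; yours generalizes more transparently (the lifting argument is dimension-free up to the final planar step) and makes the role of Proposition~\ref{prop:intersectfour} explicit in both directions.
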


\begin{proof}
Throughout the proof, we use the explicit notation $S:=\{s_{1},s_{2}\}$ and $%
T:=\{s_{1},s_{2},t_{1},t_{2}\}$.
a) $\Rightarrow $ b). Without loss of generality assume that $0\in V_{T}(S)$
while $\interior V_{T}(S)=\emptyset $. By Theorem~\ref{thm:interior} we have
\begin{equation}
0=\sum_{i,j\in \{1,2\}}\lambda _{ij}(t_{j}-s_{i}),\qquad 0=\sum_{i,j\in
\{1,2\}}\lambda _{ij}(\Vert t_{j}\Vert ^{2}-\Vert s_{i}\Vert ^{2}),
\label{eq:sysspecial}
\end{equation}%
where $\lambda _{ij}$ are convex combination coefficients. Since $0\in
V_{T}(S)$, we have from \eqref{eq:linrep} that
\begin{equation*}
\Vert t_{j}\Vert \geq \Vert s_{i}\Vert \quad \forall i,j\in \{1,2\}.
\end{equation*}%
Without loss of generality assume that
\begin{equation*}
\Vert s_{2}\Vert \leq \Vert s_{1}\Vert \leq \Vert t_{1}\Vert \leq \Vert
t_{2}\Vert .
\end{equation*}%
If $\Vert s_{1}\Vert <\Vert t_{1}\Vert $, then $\Vert s_{i}\Vert <\Vert
t_{j}\Vert $ for all $i,j\in \left\{ 1,2\right\} ,$ which implies that the
second equality in \eqref{eq:sysspecial} is impossible. Hence, $\Vert
s_{1}\Vert =\Vert t_{1}\Vert $. If $\Vert s_{1}\Vert >\Vert s_{2}\Vert $,
then $\Vert t_{i}\Vert >\Vert s_{2}\Vert $ for all $i\in \left\{ 1,2\right\}
;$ therefore, from (\ref{eq:sysspecial}), we get $\lambda _{21}=\lambda
_{22}=0$ and
\begin{equation*}
s_{1}=\lambda _{11}t_{1}+\lambda _{12}t_{2},
\end{equation*}%
so $s_{1}\in \lbrack t_{1},t_{2}]$ and $\Vert s_{1}\Vert ^{2}=\lambda
_{11}\Vert t_{1}\Vert ^{2}+\lambda _{12}\Vert t_{2}\Vert ^{2}$, which holds
only when $t_{1}=t_{2}=s_{1}$ by the strict convexity of the squared norm.
This is impossible. Likewise, when $\Vert t_{1}\Vert <\Vert t_{2}\Vert $ we
have $\lambda _{12}=\lambda _{22}=0$, then $t_{1}\in \lbrack s_{1},s_{2}]$,
which by Corollary~\ref{prop:emptycell} yields $V_{T}(S)=\emptyset $, again
a contradiction. We have proved that $\Vert s_{2}\Vert =\Vert s_{1}\Vert
=\Vert t_{1}\Vert =\Vert t_{2}\Vert $, and hence our sites are the vertices
of a cyclic quadrilateral. It is now easy to observe that $s_{1},s_{2}$ are
located opposite to each other because, by the first equality in %
\eqref{eq:sysspecial}, we have $[s_{1},s_{2}]\cap \lbrack t_{1},t_{2}]\neq
\emptyset $. b) $\Rightarrow $ c) Since the points of $T$ lie on some
circle, without loss of generality we may assume that the centre of
this circle is the origin, and then $\Vert t_{1}\Vert =\Vert
t_{2}\Vert =\Vert s_{1}\Vert =\Vert s_{2}\Vert $. In this case the
right-hand side of system \eqref{eq:linrep} is zero, and we have, for every
point $x\in V_{T}(S),$
\begin{equation}
\langle t_{j}-s_{i},x\rangle \leq 0\quad \forall i,j\in \{1,2\}.
\label{homogeneous}
\end{equation}%
It is evident that $x=0$ is a solution of this system; hence, $V_{T}(S)\neq
\emptyset $. On the other hand, from (\ref{homogeneous}) it follows that $%
V_{T}(S)$ is a cone. From these facts, using Proposition \ref%
{prop:intersectfour} we immediately deduce that $V_{T}(S)=\left\{ 0\right\}
. $ The implication c) $\Rightarrow $ a) is obvious.
\end{proof}

\bigskip

Note that for the case $|S|=2$ and $|T|=3$ it is impossible to have a nonempty
bounded cell due to Remark \ref{Remark 8}. This means that we do not need to
consider this configuration when discussing the subsequent cases of bounded polygons.

Furthermore, in the case $|S|=3$ and $|T|=4$ it is impossible to have a
bounded cell, as was shown in Proposition~\ref{prop:four-three}.

Since we have determined that we can not have a nonempty bounded cell for
$|T|=|S|+1$, the only possibility to have a singleton cell is for $|S|=2$ and
$|T|=4$. Furthermore, we can focus on the latter case when studying other
bounded cells.

\subsection{One-Dimensional Cells}

It follows from the preceding discussion that it is impossible to obtain line
segments as multipoint Voronoi cells in our setting.

\begin{corollary}
\label{cor:segmimp} Let $S\subset T\subset\R^{2},$ with $|S|=2$ and
$|T|=4.$ Then $V_{T}(S)$ is not one-dimensional.
\end{corollary}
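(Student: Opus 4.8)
The plan is to derive this corollary directly from Proposition~\ref{prop:cyclic}, which has already done the heavy lifting. The key observation is that Proposition~\ref{prop:cyclic} establishes the equivalence of three statements, and in particular the implication a) $\Rightarrow$ c): if $V_{T}(S)$ is nonempty and \emph{at most} one-dimensional, then it is in fact a singleton (zero-dimensional). Thus there is no room for a genuinely one-dimensional cell to exist.

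First I would argue by contradiction. Suppose that $V_{T}(S)$ is one-dimensional. Then in particular $V_{T}(S)$ is nonempty and its dimension is at most one, so hypothesis a) of Proposition~\ref{prop:cyclic} is satisfied. By the implication a) $\Rightarrow$ c), we conclude that $V_{T}(S)$ is a singleton, hence zero-dimensional. This contradicts the assumption that $V_{T}(S)$ is one-dimensional, completing the proof.

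The main point to be careful about is the precise wording of hypothesis a) in Proposition~\ref{prop:cyclic}: it requires the cell to be \emph{nonempty and at most one-dimensional}, which is exactly the condition satisfied by a one-dimensional cell. Since a one-dimensional set is automatically nonempty, there is no gap here. I do not anticipate any genuine obstacle, as the entire content of the corollary is already contained in the chain of implications proved in Proposition~\ref{prop:cyclic}; the corollary is essentially a restatement emphasizing the geometric consequence that line segments (and, more generally, any one-dimensional polyhedra) simply cannot arise as cells $V_{T}(S)$ in the setting $|S|=2$, $|T|=4$.
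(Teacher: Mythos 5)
Your proof is correct and is exactly the paper's argument: the paper's own proof reads ``Follows directly from Proposition~\ref{prop:cyclic},'' and your write-up simply makes explicit the implication a) $\Rightarrow$ c) that this reference relies on.
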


\begin{proof}
Follows directly from Proposition~\ref{prop:cyclic}.
\end{proof}

It follows from Corollary~\ref{cor:segmimp} that it is impossible to have a
one-dimensional cell for $|T|=4$, $|S|=2$, so both rays and lines are
impossible in this configuration.

Now consider the case $|T|=|S|+1$. If $V_{T}\left(  S\right)  \neq\emptyset,$
by Corollary~\ref{prop:emptycell} we must have for $\{t\}=T\setminus S$ that
$t\notin\conv S$. By the separation theorem this yields the existence of some
$d$, $\Vert d\Vert=1$ such that
\[
\langle t-s,d\rangle<0\quad\forall s\in S,
\]
which yields the existence of a sufficiently small ball $B_{\varepsilon}(d)$
centred at $d$ such that
\[
\langle t-s,y\rangle<0\quad\forall s\in S,\;\forall y\in B_{\varepsilon}(d).
\]
Then for any $x_{0}\in V_{T}(S)$ and any $y\in B_{\varepsilon}(d)$ we have
\[
\langle t-s,x_{0}+y\rangle=\langle t-s,x_{0}\rangle+\langle t-s,y\rangle
<\langle t-s,x_{0}\rangle.
\]
It is hence clear from the representation in
Proposition~\ref{prop:inequalities} that $x_{0}+B_{\varepsilon}(y)\subset
V_{T}(S)$, which gives that $V_{T}(S)$ is one-dimensional.

Thus we have proved the following statement.

\begin{proposition}
Let $S\subset T\subset\R^{n},$ with $|T|= |S|+1$. Then $V_{T}(S)$ is
not one-dimensional.
\end{proposition}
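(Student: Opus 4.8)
The plan is to reduce the statement to a dichotomy: the cell $V_T(S)$ is either empty or $n$-dimensional, and in neither case can it be one-dimensional. Write $T\setminus S=\{t\}$; by Proposition~\ref{prop:inequalities} the cell is then cut out by the inequalities $\langle t-s,x\rangle\le\frac12(\Vert t\Vert^2-\Vert s\Vert^2)$, one for each $s\in S$. The structural feature I want to exploit is that all of these inequalities carry the \emph{same} vector $t$ on the left, so a single perturbation direction can relax every one of them at once.

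First I would dispose of the degenerate possibility. If $t\in\conv S$, then Corollary~\ref{prop:emptycell} forces $V_T(S)=\emptyset$, and the empty set is not one-dimensional. Hence I may assume $t\notin\conv S$. Since $\conv S$ is a compact convex set not containing $t$, the separation theorem produces a unit vector $d$ with $\langle t-s,d\rangle<0$ for every $s\in S$; because $S$ is finite these strict inequalities survive on a small ball $B_\varepsilon(d)$, giving $\langle t-s,y\rangle<0$ for all $s\in S$ and all $y\in B_\varepsilon(d)$.

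The main step is to translate an arbitrary feasible point in this direction. For any $x_0\in V_T(S)$ and any $y\in B_\varepsilon(d)$, I would add the two estimates to obtain, for every $s\in S$,
\[
\langle t-s,x_0+y\rangle=\langle t-s,x_0\rangle+\langle t-s,y\rangle<\langle t-s,x_0\rangle\le\tfrac12(\Vert t\Vert^2-\Vert s\Vert^2),
\]
so that $x_0+y\in V_T(S)$ by the representation~\eqref{eq:linrep}. Thus $x_0+B_\varepsilon(d)\subset V_T(S)$, the cell contains a ball, and so it is $n$-dimensional. Combining the two cases, $V_T(S)$ is either empty or full-dimensional, and hence not one-dimensional.

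I expect the only genuinely delicate point to be conceptual rather than computational: recognizing that it is exactly the hypothesis $|T|=|S|+1$, equivalently $|T\setminus S|=1$, that lets one choose a \emph{single} direction $d$ strictly decreasing all of the active linear functionals $\langle t-s,\cdot\rangle$ simultaneously. With two or more points in $T\setminus S$ the separating directions for different constraints need not be compatible, and the cell can legitimately drop dimension (as the singleton examples of the previous subsection show). Everything past the separation step is then a one-line verification against~\eqref{eq:linrep}.
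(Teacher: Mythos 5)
Your argument is correct and is essentially identical to the paper's own proof: both dispose of the case $t\in\conv S$ via Corollary~\ref{prop:emptycell}, then separate $t$ from $\conv S$ to get a direction $d$ along which every defining inequality is strictly relaxed, and conclude that the nonempty cell contains a ball $x_0+B_\varepsilon(d)$ and is therefore full-dimensional. The only difference is cosmetic: you state explicitly the resulting dichotomy (empty or full-dimensional) and the role of the hypothesis $|T\setminus S|=1$, which the paper leaves implicit.
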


\subsection{Triangles}

A somewhat surprising result is that a second order Voronoi cell cannot be a
triangle. As discussed previously, we only need to prove this for the case
$|S|=2$ and $|T|=4$.

\begin{proposition}
\label{triangles}Let $S\subset T\subset\R^{2},$ with $|S|=2$ and
$|T|=4.$ Then $V_{T}(S)$ is not a triangle.
\end{proposition}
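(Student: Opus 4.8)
The plan is to assume, for contradiction, that $V_{T}(S)$ is a triangle and to exploit the facet-avoidance property of the perpendicular bisector $H$ supplied by Theorem~\ref{thm:facets}. Write $S=\{s_{1},s_{2}\}$ and $T\setminus S=\{t_{1},t_{2}\}$, let $H$ be the perpendicular bisector of $s_{1},s_{2}$ as in that theorem, and split the plane into the closed half-planes $H^{+}:=\{x:\dist(x,s_{1})\le\dist(x,s_{2})\}$ and $H^{-}:=\{x:\dist(x,s_{2})\le\dist(x,s_{1})\}$. A triangle is two-dimensional, so $\interior V_{T}(S)\neq\emptyset$ and Theorem~\ref{thm:facets} applies. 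Each of its three edges lies on one of the four bisecting lines $\{x:\dist(x,s_{i})=\dist(x,t_{j})\}$ featuring in \eqref{eq:linrep}; I will call an edge an \emph{$s_{i}$-edge} according to which site $s_{i}$ labels its active constraint. The first easy observation is that every $s_{1}$-edge lies in $H^{-}$ and every $s_{2}$-edge lies in $H^{+}$: indeed, if $x$ belongs to an $s_{1}$-edge then $\dist(x,s_{1})=\dist(x,t_{j})$, while $x\in V_{T}(S)$ forces $\dist(x,s_{2})\le\dist(x,t_{j})$, whence $\dist(x,s_{2})\le\dist(x,s_{1})$.

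Next I would show that the triangle must have at least one $s_{1}$-edge and at least one $s_{2}$-edge. There are only two $s_{1}$-bisecting lines (for $t_{1}$ and $t_{2}$), so if all three edges were $s_{1}$-edges then two of them would be collinear, which is impossible for a triangle; symmetrically not all three can be $s_{2}$-edges. (Should an edge happen to lie simultaneously on an $s_{1}$- and an $s_{2}$-bisector, the computation above forces that edge into $H$, contradicting Theorem~\ref{thm:facets} at once, so this degenerate case causes no trouble.) Hence there is an $s_{1}$-edge $E_{1}\subseteq H^{-}$ and an $s_{2}$-edge $E_{2}\subseteq H^{+}$. By Theorem~\ref{thm:facets} the relative interiors of $E_{1}$ and $E_{2}$ miss $H$, so they lie in the \emph{open} half-planes. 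Choosing $p\in\ri E_{1}$ and $q\in\ri E_{2}$, the open segment $(p,q)$ lies in $\interior V_{T}(S)$ (its endpoints being in the relative interiors of two distinct facets) and, since $p$ and $q$ sit strictly on opposite sides of $H$, it crosses $H$ at an interior point. Thus $H\cap\interior V_{T}(S)\neq\emptyset$.

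Finally I would derive the contradiction from this crossing. If $H$ meets the interior of the triangle, then $H\cap V_{T}(S)$ is a chord whose two endpoints lie on the boundary; by Theorem~\ref{thm:facets} neither endpoint can lie in the relative interior of an edge, so both are vertices. But the straight segment joining two vertices of a triangle is one of its edges, so this chord coincides with an edge and lies entirely in $H$, contradicting the facet-avoidance of Theorem~\ref{thm:facets} once more. This contradiction shows that $V_{T}(S)$ cannot be a triangle.

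The step I expect to be the main obstacle is the coupling running through the middle and last paragraphs: one must simultaneously \emph{force} $H$ to pass through the interior (because the facets are pinned to opposite sides of $H$) and \emph{forbid} it from doing so (because any interior chord would be an edge contained in $H$). Making this airtight requires care with the boundary cases---an edge meeting $H$ only at a vertex, or an edge lying on bisectors of both sites---both of which are absorbed cleanly by Theorem~\ref{thm:facets}. The remaining ingredients, namely the side-of-$H$ computation and the pigeonhole on the bisecting lines, are routine.
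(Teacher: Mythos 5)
Your proof is correct, but it takes a genuinely different route from the paper's. The paper argues algebraically: writing the four constraints as $\langle c_{ij},x\rangle\le\alpha_{ij}$ with $c_{ij}=t_i-s_j$, it exploits the exact linear relation $(c_{22},\alpha_{22})=-(c_{11},\alpha_{11})+(c_{12},\alpha_{12})+(c_{21},\alpha_{21})$ among the lifted constraint vectors, observes that a triangle forces one of the four inequalities to be redundant, applies Farkas' lemma to that redundant inequality, and uses the linear independence of the other three lifted vectors (verified at the vertices) to extract the coefficient $\lambda_{11}=-1<0$, a contradiction. You instead argue geometrically through Theorem~\ref{thm:facets}: each edge is pinned to a closed side of the bisector $H$ of $s_1,s_2$ according to which site labels its active constraint, the pigeonhole of three edges against only two bisecting lines per site forces edges on both sides of $H$, hence $H$ must cross the interior, and then the facet-avoidance property forces the resulting chord to have vertex endpoints and therefore to be an edge contained in $H$, which Theorem~\ref{thm:facets} forbids. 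All your steps check out, including the boundary cases you flag: an edge lying on bisectors of both sites collapses into $H$ and is killed immediately, and the chord's endpoints cannot sit in relative interiors of edges, so they are vertices, whose joining segment in a triangle is an edge. Your route is arguably more transparent and actually puts Theorem~\ref{thm:facets} to use (the paper proves that theorem but does not invoke it in its own proof of this proposition); the paper's route is self-contained modulo Farkas' lemma, and its lifted-vector identity is reused in the proof of Proposition~\ref{prop:impunbthree}. Note also that your pigeonhole step is specific to three edges versus two lines per label, which is exactly why the argument correctly stops short of excluding quadrilaterals.
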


\begin{proof}
Suppose that $V_{T}\left( S\right) $ is a triangle. Denote $%
T=\{s_{1},s_{2},t_{1},t_{2}\}$, $S=\{s_{1},s_{2}\}$. The cell $V_{T}(S)$ is
the solution set of the linear system of inequalities
\begin{equation}
\left\langle c_{ij},x\right\rangle \leq \alpha _{ij}\text{\quad }\left(
i,j=1,2\right) ,  \label{system}
\end{equation}%
with $c_{ij}:=t_{i}-s_{j}$ and $\alpha _{ij}:=\frac{1}{2}\left( \left\Vert
t_{i}\right\Vert ^{2}-\left\Vert s_{j}\right\Vert ^{2}\right) .$ Notice that%
\begin{equation}
\left(
\begin{array}{c}
c_{22} \\
\alpha _{22}%
\end{array}%
\right) =-\left(
\begin{array}{c}
c_{11} \\
\alpha _{11}%
\end{array}%
\right) +\left(
\begin{array}{c}
c_{12} \\
\alpha _{12}%
\end{array}%
\right) +\left(
\begin{array}{c}
c_{21} \\
\alpha _{21}%
\end{array}%
\right) .  \label{relation}
\end{equation}%
Without loss of generality, we will assume that $0\in \interior V_{T}\left(
S\right) $, that is, $\alpha _{ij}>0$ $\left( i,j=1,2\right) .$ Since (\ref%
{system}) defines a triangle, one of its inequalities, say the one
corresponding to $i=2=j$ is redundant, so that the triangle is actually the
solution set of the system consisting of the other three. We claim that the
vectors $\left(
\begin{array}{c}
c_{11} \\
\alpha _{11}%
\end{array}%
\right) ,\left(
\begin{array}{c}
c_{12} \\
\alpha _{12}%
\end{array}%
\right) $ and $\left(
\begin{array}{c}
c_{21} \\
\alpha _{21}%
\end{array}%
\right) $ are linearly independent. Indeed, assume the existence of $\left(
\beta _{11},\beta _{12},\beta _{21}\right) \in \R^{3}\setminus
\left\{ \left( 0,0,0\right) \right\} $ such that%
\begin{equation}
\beta _{11}\left(
\begin{array}{c}
c_{11} \\
\alpha _{11}%
\end{array}%
\right) +\beta _{12}\left(
\begin{array}{c}
c_{12} \\
\alpha _{12}%
\end{array}%
\right) +\beta _{21}\left(
\begin{array}{c}
c_{21} \\
\alpha _{21}%
\end{array}%
\right) =\left(
\begin{array}{c}
0_{2} \\
0%
\end{array}%
\right) ,  \label{linear independence}
\end{equation}%
and let $\overline{x}$ be the vertex of $V_{T}(S)$ defined by%
\begin{equation}
\left\langle c_{12},\overline{x}\right\rangle =\alpha _{12}\text{ and }%
\left\langle c_{21},\overline{x}\right\rangle =\alpha _{21}.  \label{vertex}
\end{equation}%
Then, from (\ref{linear independence}) and (\ref{vertex}) we easily deduce
that $\beta _{11}\left( \left\langle c_{11},\overline{x}\right\rangle
-\alpha _{11}\right) =0.$ Since $\left\langle c_{11},\overline{x}%
\right\rangle <\alpha _{11},$ as the three sides of $V_{T}(S)$ don't have a
common point, it follows that $\beta _{11}=0.$ By using the same argument
with the other two vertices of $V_{T}(S),$ we deduce that $\beta
_{12}=0=\beta _{21},$ thus proving our claim that $\left(
\begin{array}{c}
c_{11} \\
\alpha _{11}%
\end{array}%
\right) ,\left(
\begin{array}{c}
c_{12} \\
\alpha _{12}%
\end{array}%
\right) $ and $\left(
\begin{array}{c}
c_{21} \\
\alpha _{21}%
\end{array}%
\right) $ are linearly independent. Since the inequality corresponding to $%
i=2=j$ is redundant, by Farkas' lemma there exist $\lambda _{ij}\geq 0$ $%
\left( i,j=1,2\right) $ such that
\begin{equation}
c_{22}=\lambda _{11}c_{11}+\lambda _{12}c_{12}+\lambda _{21}c_{21}\quad
\text{and\quad }\alpha _{22}\geq \lambda _{11}\alpha _{11}+\lambda
_{12}\alpha _{12}+\lambda _{21}\alpha _{21}.  \label{Farkas}
\end{equation}%
Considering again the vertex $\overline{x},$ from (\ref{relation}) we obtain
that%
\begin{align*}
\alpha _{22}& =-\alpha _{11}+\alpha _{12}+\alpha _{21}\leq -\left\langle
c_{11},\overline{x}\right\rangle +\left\langle c_{12},\overline{x}%
\right\rangle +\left\langle c_{21},\overline{x}\right\rangle =\left\langle
-c_{11}+c_{12}+c_{21},\overline{x}\right\rangle \\
& =\left\langle c_{22},\overline{x}\right\rangle \leq \alpha _{22};
\end{align*}%
hence, by (\ref{Farkas}), we deduce that%
\begin{align*}
\left\langle c_{22},\overline{x}\right\rangle & =\alpha _{22}\geq \lambda
_{11}\alpha _{11}+\lambda _{12}\alpha _{12}+\lambda _{21}\alpha _{21} \\
& \geq \lambda _{11}\left\langle c_{11},\overline{x}\right\rangle +\lambda
_{12}\left\langle c_{12},\overline{x}\right\rangle +\lambda
_{21}\left\langle c_{21},\overline{x}\right\rangle =\left\langle \lambda
_{11}c_{11}+\lambda _{12}c_{12}+\lambda _{21}c_{21},\overline{x}\right\rangle
\\
& =\left\langle c_{22},\overline{x}\right\rangle .
\end{align*}%
Therefore $\lambda _{11}\alpha _{11}+\lambda _{12}\alpha _{12}+\lambda
_{21}\alpha _{21}=\alpha _{22},$ that is,%
\begin{equation*}
\left(
\begin{array}{c}
c_{22} \\
\alpha _{22}%
\end{array}%
\right) =\lambda _{11}\left(
\begin{array}{c}
c_{11} \\
\alpha _{11}%
\end{array}%
\right) +\lambda _{12}\left(
\begin{array}{c}
c_{12} \\
\alpha _{12}%
\end{array}%
\right) +\lambda _{12}\left(
\begin{array}{c}
c_{21} \\
\alpha _{21}%
\end{array}%
\right) .
\end{equation*}%
Comparing this equality with (\ref{relation}) and taking into account that
the vectors $\left(
\begin{array}{c}
c_{11} \\
\alpha _{11}%
\end{array}%
\right) ,\left(
\begin{array}{c}
c_{12} \\
\alpha _{12}%
\end{array}%
\right) $ and $\left(
\begin{array}{c}
c_{21} \\
\alpha _{21}%
\end{array}%
\right) $ are linearly independent, we deduce that $\lambda _{11}=-1,$ a
contradiction.
\end{proof}

\subsection{Bounded Quadrilaterals}

\label{sec:quadrilaterals}

We next show that any non-cyclic bounded convex quadrilateral is a second
order Voronoi cell with $|T|=4$. We also prove that it is impossible for a
cyclic quadrilateral to be a Voronoi cell of 2 points (when $|T|=4$).

\begin{proposition}
\label{prop:quadrilateral} Let $F\subset\R^{2}$ be a non-cyclic
bounded convex quadrilateral. Then there exist {$S\subset T\subset
\R^{2}$}, with $|S|=2$ and $|T|=4$, such that $V_{T}(S)=F$.
\end{proposition}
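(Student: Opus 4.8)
The plan is to prove this constructively: given a non-cyclic bounded convex quadrilateral $F$ with vertices, say, $v_1,v_2,v_3,v_4$ in cyclic order, I want to produce four points $s_1,s_2,t_1,t_2$ so that $V_T(S)=F$ with $S=\{s_1,s_2\}$, $T=\{s_1,s_2,t_1,t_2\}$. The key observation is that each of the four edges of $F$ lies on a bisector hyperplane between one site of $S$ and one site of $T\setminus S$. Writing $S=\{s_1,s_2\}$ and $T\setminus S=\{t_1,t_2\}$, the four inequalities in \eqref{eq:linrep} correspond to the pairs $(s_1,t_1),(s_1,t_2),(s_2,t_1),(s_2,t_2)$, and each supplies the bisector line between that pair. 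So I need to label the four edges of $F$ by these four pairs in a consistent way, meaning that the two edges bisecting $s_1$ from $t_1,t_2$ are the two edges ``facing'' $s_1$, and similarly the edges facing $s_2$ are the bisectors of $s_2$ against $t_1,t_2$. This forces $s_1$ and $s_2$ to sit on opposite sides of $F$ (consistent with Proposition~\ref{prop:intersectfour}, where the diagonal $(s_1,s_2)$ must cross $(t_1,t_2)$), so I assign $s_1,s_2$ to one pair of opposite edges and the crossing condition determines the geometry.

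Concretely, I would start from the four edge-lines of $F$ and reverse-engineer the sites. Each edge of $F$ lies on a line $\langle a,x\rangle=b$ with outward normal $a$; the requirement that this line be the perpendicular bisector of a pair $(s,t)$ means $t-s$ is a positive multiple of $a$ and the midpoint $\frac{s+t}{2}$ lies on the line. The natural device is to fix a single circle: if all four sites were concyclic the right-hand sides of \eqref{eq:linrep} would vanish at the centre and we would get the degenerate singleton of Proposition~\ref{prop:cyclic}, so the non-cyclic hypothesis is exactly what lets me avoid that collapse. I would instead reflect across the edge-lines: pick a point, reflect it across two adjacent edge-lines to generate candidate sites, and adjust so that the four reflections close up into the required two-pair structure. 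The cleanest route is probably to choose $s_1$ freely (e.g.\ interior to $F$ or at a convenient reference point) and define $t_1,t_2$ as the reflections of $s_1$ across the two edges assigned to $s_1$; this automatically makes those two edges the correct bisectors. Then $s_2$ is forced by the condition that the remaining two edges be the bisectors of $s_2$ against the \emph{same} $t_1,t_2$, i.e.\ $s_2$ is the reflection of $t_1$ across one remaining edge and simultaneously the reflection of $t_2$ across the other.

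The main obstacle will be showing this system is consistent precisely because $F$ is non-cyclic, and then verifying that the resulting $V_T(S)$ equals $F$ rather than a larger or smaller polyhedron. Consistency amounts to a compatibility equation: the point obtained by reflecting $t_1$ across the third edge must coincide with the point obtained by reflecting $t_2$ across the fourth edge, and I expect this single scalar/vector equation to have a solution exactly when the four edge-lines do \emph{not} admit a common concyclic configuration of the four sites --- that is, when $F$ is not cyclic. I would quantify this by writing the reflections explicitly and reducing the closure condition to a determinant or angle condition, then identifying that the degenerate (no-solution or forced-concyclic) case is precisely the inscribed-quadrilateral condition $\angle v_1+\angle v_3=\pi$. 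Once consistency is established, it remains to confirm $V_T(S)=F$ exactly: I must check that with these sites each of the four inequalities of \eqref{eq:linrep} is active on the correct edge and that no inequality is redundant, so that the four half-planes cut out exactly the four-sided region $F$ and not a triangle (which is ruled out anyway by Proposition~\ref{triangles}) or an unbounded set. The boundedness follows from Proposition~\ref{prop:intersectfour} once I confirm the diagonal through $s_1,s_2$ meets the diagonal through $t_1,t_2$; and nonredundancy plus correct activeness follow from the reflection construction, since each $t_i$ is strictly on the far side of its bisector edges. This verification, together with the non-cyclic compatibility computation, is where the real work lies.
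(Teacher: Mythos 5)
Your overall strategy---recovering the four sites by reflecting across the four edge lines of $F$ and tying the solvability of the resulting closure condition to non-cyclicity---is the right idea and is close in spirit to the paper's proof (there, $t_1$ and $t_2$ are constructed explicitly from the angles $\alpha,\beta$ of $F$ at a pair of opposite vertices chosen so that $\alpha+\beta<\pi$, which is possible precisely because $F$ is non-cyclic; $s_1$ and $s_2$ are then the reflections of $t_1$ in the two edges meeting at one vertex, and the remaining two perpendicular-bisector conditions are verified by inscribed-angle arguments). But as written your argument has genuine gaps and one outright error. First, $s_1$ cannot be ``chosen freely'': the four bisector conditions are eight scalar equations in the eight coordinates of the sites, and after using three edges to express $t_1$, $t_2$, $s_2$ in terms of $s_1$, the fourth edge forces $s_1$ to be a fixed point of the composition of the four reflections in the edge lines, which is an orientation-preserving planar isometry. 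So $s_1$ is generically \emph{unique}, and the real content of the proposition is to show that this isometry is a nontrivial rotation (hence has a fixed point) exactly when $F$ is non-cyclic; its rotation angle is governed by the interior angles of $F$ at the two opposite vertices where the same-site edges meet, and it degenerates to a translation precisely when those angles sum to $\pi$, i.e.\ in the cyclic case. You state this equivalence only as an expectation; it is the heart of the matter and cannot be deferred.

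Second, even once a consistent quadruple of sites on the four bisectors exists, it does \emph{not} ``follow from the reflection construction'' that each inequality of \eqref{eq:linrep} cuts on the correct side: you need each $s_i$ to lie strictly on the $F$-side of both of its assigned edges (equivalently each $t_j$ strictly outside), since otherwise the half-plane $\{x:\dist(x,s_i)\le\dist(x,t_j)\}$ is the complement of the one bounding $F$. This orientation check is a property of the particular fixed point, not of reflections in general, and it is exactly where the choice of which diagonal of $F$ carries the $S$-sites matters (the paper's normalization $\alpha+\beta<\pi$; the construction genuinely fails for the other choice of opposite vertices). Finally, fix the combinatorics: by the vertex argument used in Proposition~\ref{prop:cyclicneg}, the two edges bisecting $s_1$ from $t_1$ and from $t_2$ must be \emph{adjacent} edges of $F$ (and likewise for $s_2$, and for $t_1$, $t_2$), so ``assign $s_1,s_2$ to one pair of opposite edges'' should read: assign to $s_1$ the two edges meeting at one vertex and to $s_2$ the two meeting at the opposite vertex. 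With the fixed-point computation and the orientation check actually carried out, your route would yield a correct and arguably more systematic alternative to the paper's synthetic construction, but as it stands it is a plan rather than a proof.
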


\begin{proof}
First, looking at Fig.~\ref{fig:quadrilateral-second}, where $F$ is depicted
as the quadrilateral $ACBD,$%
\begin{figure}[th]
\centering \includegraphics[width=0.8\textwidth]{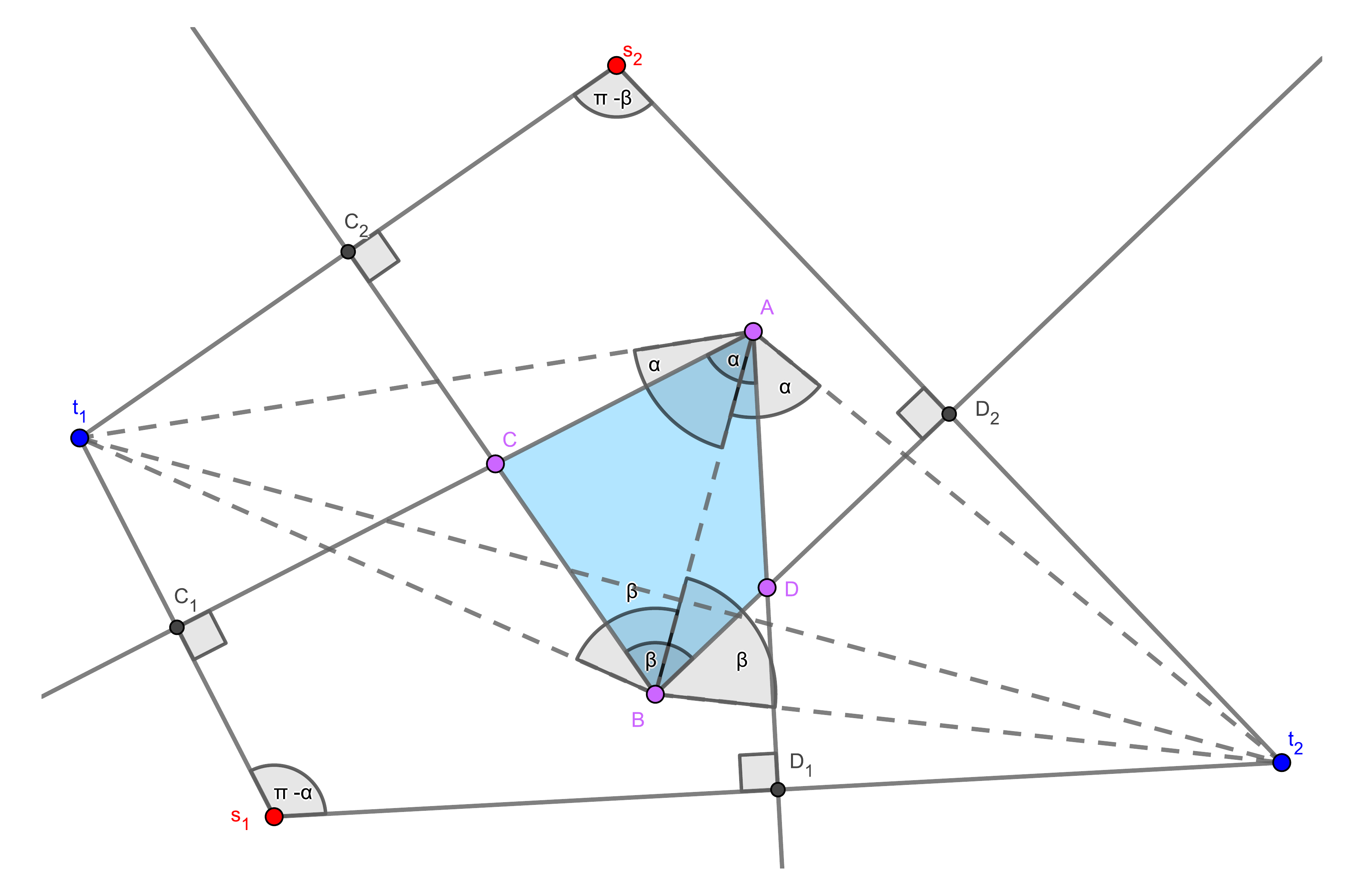}
\caption{Bounded quadrilateral.}
\label{fig:quadrilateral-second}
\end{figure}
we would like to mention that $\sphericalangle CAD=\alpha >0$ and $%
\sphericalangle CBD=\beta >0$ and, w.l.o.g., $\pi >\alpha +\beta .$ We draw
the lines through $A$ ($B$) that make an angle equal to $\alpha $ (resp., $%
\beta $) with the segment $AB.$ In this way, we get the points $t_{1}$ and $%
t_{2}$ as the two other vertices of the quadrilateral having those lines as
their sides. We take $s_{1}$ and $s_{2}$ as the symmetric points of $t_{1}$
with respect to the lines $CA$ and $CB,$ respectively, and we set $%
S:=\left\{ s_{1},s_{2}\right\} $ and $T:=\left\{
s_{1},s_{2},t_{1},t_{2}\right\} $. The only thing that we have to prove is
that $t_{2}s_{1}\bot AD$ and $t_{2}s_{2}\bot BD.$ The point $B$ is the
center of the circle passing through the points $t_{1},$ $t_{2}\ $and$%
\ s_{2}.$ Therefore, $\sphericalangle t_{1}s_{2}t_{2}=\pi -\beta ,$ which,
looking at the quadrilateral $C_{2}s_{2}D_{2}B,$ implies that $%
t_{2}s_{2}\bot BD.$ The point $A$ is the center of the circle passing
through the points $t_{1},$ $t_{2}$ and$\ s_{1}.$ Therefore, $\sphericalangle
t_{1}s_{1}t_{2}=\pi -\alpha ,$ which, looking at the quadrilateral $%
C_{1}s_{1}D_{1}A,$ implies $t_{2}s_{1}\bot AD.$
\end{proof}

\bigskip

Observe that the algorithm does not work if $\pi\leq\alpha+\beta$. However it
is not difficult to observe that for a non-cyclic convex quadrilateral it is
always possible to choose the corners to ensure $\alpha+\beta<\pi$. We have
the following negative result.

\begin{proposition}
\label{prop:cyclicneg} Let $S\subset T\subset\R^{2},$ with $|S|=2$ and
$|T|=4.$ Then $V_{T}(S)$ is not a cyclic quadrilateral.
\end{proposition}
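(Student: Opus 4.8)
The plan is to prove the contrapositive-style statement by assuming $V_T(S)$ is a cyclic quadrilateral and deriving a contradiction with the earlier structural results. The key observation is that a cyclic quadrilateral is in particular a \emph{bounded} cell, so by Proposition~\ref{prop:intersectfour} the two segments $(s_1,s_2)$ and $(t_1,t_2)$ must cross, meaning $s_1,s_2$ are opposite vertices of $\conv\{s_1,s_2,t_1,t_2\}$ (across a diagonal). This is exactly configuration b) of Proposition~\ref{prop:cyclic}.

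The heart of the argument is then to invoke Proposition~\ref{prop:cyclic} directly. That proposition asserts the equivalence of three conditions: (a) $V_T(S)$ nonempty and at most one-dimensional, (b) the four points form a cyclic quadrilateral with the sites of $S$ opposite each other, and (c) $V_T(S)$ is a singleton. The crucial implication here is b)~$\Rightarrow$~c). If $V_T(S)$ were a cyclic quadrilateral, then the four sites $s_1,s_2,t_1,t_2$ would be the vertices of that cyclic quadrilateral, and since $V_T(S)$ is bounded we have just established that $s_1,s_2$ sit opposite to each other. Hence condition b) of Proposition~\ref{prop:cyclic} holds, which forces $V_T(S)$ to be a singleton by implication b)~$\Rightarrow$~c). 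A singleton is zero-dimensional, contradicting the assumption that $V_T(S)$ is a (two-dimensional) quadrilateral.

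First I would spell out the identification of the vertices of the cyclic quadrilateral $V_T(S)$ with the four sites. One must be a little careful here: the statement says $V_T(S)$ \emph{is} a cyclic quadrilateral, but the sites $s_1,s_2,t_1,t_2$ are the generating points, not a priori the corners of the cell. The cleanest route avoids conflating the two by arguing purely at the level of conditions: a cyclic quadrilateral cell is bounded and two-dimensional, so it is certainly nonempty and \emph{not} at most one-dimensional, i.e. condition a) of Proposition~\ref{prop:cyclic} fails; by the equivalence a)~$\Leftrightarrow$~b), condition b) also fails, so the four \emph{sites} do not form a cyclic quadrilateral with $S$ opposite. Separately, boundedness plus Proposition~\ref{prop:intersectfour} forces $(s_1,s_2)\cap(t_1,t_2)$ to be a singleton, which places $s_1,s_2$ at opposite vertices of the convex hull of the sites.

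The main obstacle, and the step requiring the most care, is reconciling these two facts to pin down that the four sites actually do lie on a common circle, so that condition b) holds and the contradiction lands. The natural approach is: boundedness gives (via Proposition~\ref{prop:intersectfour}) that $s_1,s_2$ and $t_1,t_2$ are the two diagonals' endpoints of $\conv\{s_1,s_2,t_1,t_2\}$; it remains to show the sites are concyclic. I would argue that if the sites were \emph{not} concyclic, then $\conv\{s_1,s_2,t_1,t_2\}$ would be a non-cyclic bounded convex quadrilateral, and one would expect the resulting cell to be genuinely two-dimensional — but a direct check using the linear representation \eqref{eq:linrep}, or an appeal to the structure established in the proof of Proposition~\ref{prop:cyclic}, shows that a two-dimensional bounded cell generated by four concyclic-or-not sites with crossing diagonals cannot itself be a cyclic quadrilateral. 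Thus the surviving possibility, concyclic sites, triggers b)~$\Rightarrow$~c) and collapses the cell to a point, the desired contradiction. I would therefore streamline the final writeup by leaning on Proposition~\ref{prop:cyclic} as the workhorse and keeping the vertex-identification bookkeeping explicit but brief.
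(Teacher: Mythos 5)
Your overall strategy is sound and matches the paper's endgame: assume the cell is a cyclic quadrilateral, show the four \emph{sites} must then be concyclic with $s_1,s_2$ opposite, and invoke the implication b)~$\Rightarrow$~c) of Proposition~\ref{prop:cyclic} to collapse the cell to a singleton, a contradiction. The use of Proposition~\ref{prop:intersectfour} to get the crossing diagonals is also consistent with what is needed. The problem is that the one step you yourself flag as ``the main obstacle'' --- proving that the sites are concyclic --- is never actually carried out. Your argument for it is: if the sites were not concyclic, then ``a direct check \dots shows that a two-dimensional bounded cell generated by four concyclic-or-not sites with crossing diagonals cannot itself be a cyclic quadrilateral.'' That sentence is precisely the proposition being proved, so the argument is circular. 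Note also that Proposition~\ref{prop:quadrilateral} shows every non-cyclic bounded convex quadrilateral \emph{is} realizable as such a cell, so non-concyclic sites with crossing diagonals do generically produce genuine two-dimensional bounded quadrilateral cells; the whole content of the proposition is that none of these cells is ever cyclic, and nothing in your proposal addresses why.

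The missing idea, which is the heart of the paper's proof, is a combinatorial-plus-angle-chasing argument about which bisectors can bound adjacent sides of the cell. Each side of the cell lies on the perpendicular bisector of some pair $(s_i,t_j)$. One first shows that two sides coming from \emph{disjoint} pairs (e.g.\ $\{s_1,t_1\}$ and $\{s_2,t_2\}$) cannot be adjacent: if their intersection $u$ were a vertex of the cell, then adding the two bisector equalities and comparing with the remaining two cell inequalities from \eqref{eq:linrep} forces all four inequalities to be active at $u$, so all four bounding lines pass through $u$ and the cell cannot be a quadrilateral. This pins down the cyclic order of the sides, and then an angle comparison shows that the angles of the site quadrilateral $s_1t_1s_2t_2$ equal corresponding angles of the cell; since opposite angles of the (cyclic) cell sum to $\pi$, the same holds for the site quadrilateral, which is therefore cyclic with $s_1,s_2$ opposite. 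Only at that point can Proposition~\ref{prop:cyclic} be applied. Without some version of this argument your proof does not close.
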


\begin{proof}
Assume that the Voronoi cell $V_{T}(S)$ of some set $S:=\{s_{1},s_{2}\}$,
with $T:=\{s_{1},s_{2},t_{1},t_{2}\}$, is a cyclic quadrilateral. Then each
side of this quadrilateral is defined by the bisector between $s_{i}$ and $%
t_{j}$ for $i,j\in \{1,2\}$. First, we will show that any two sides defined
by the bisectors of disjoint pairs, say, $\{s_{1},t_{1}\}$ and $%
\{s_{2},t_{2}\}$, cannot be adjoint. Assume the contrary: then without loss
of generality the intersection $u$ of the two bisectors is a vertex of $%
V_{T}(S)$. We have, by using the representation \eqref{eq:linrep},
\begin{equation}
\langle t_{1}-s_{2},u\rangle \leq \frac{1}{2}(\Vert t_{1}\Vert ^{2}-\Vert
s_{2}\Vert ^{2}),\qquad \langle t_{2}-s_{1},u\rangle \leq \frac{1}{2}(\Vert
t_{2}\Vert ^{2}-\Vert s_{1}\Vert ^{2}),  \label{eq:constrfu}
\end{equation}%
and since $u$ is the intersection of the two bisectors,
\begin{equation}
\langle t_{1}-s_{1},u\rangle =\frac{1}{2}(\Vert t_{1}\Vert ^{2}-\Vert
s_{1}\Vert ^{2}),\qquad \langle t_{2}-s_{2},u\rangle =\frac{1}{2}(\Vert
t_{2}\Vert ^{2}-\Vert s_{2}\Vert ^{2}).  \label{eq:defu}
\end{equation}%
Adding the two equalities in \eqref{eq:defu} and rearranging, we obtain
\begin{equation*}
\langle t_{1}-s_{2},u\rangle +\langle t_{2}-s_{1},u\rangle =\frac{1}{2}%
(\Vert t_{1}\Vert ^{2}-\Vert s_{2}\Vert ^{2})+\frac{1}{2}(\Vert t_{2}\Vert
^{2}-\Vert s_{1}\Vert ^{2}).
\end{equation*}%
Together with \eqref{eq:constrfu} this yields equalities in %
\eqref{eq:constrfu}, and hence the four lines that define the sides of the
quadrilateral must intersect at $u$. This is impossible, hence, the
assumption is wrong.
\begin{figure}[th]
\centering \includegraphics[width=0.7\textwidth]{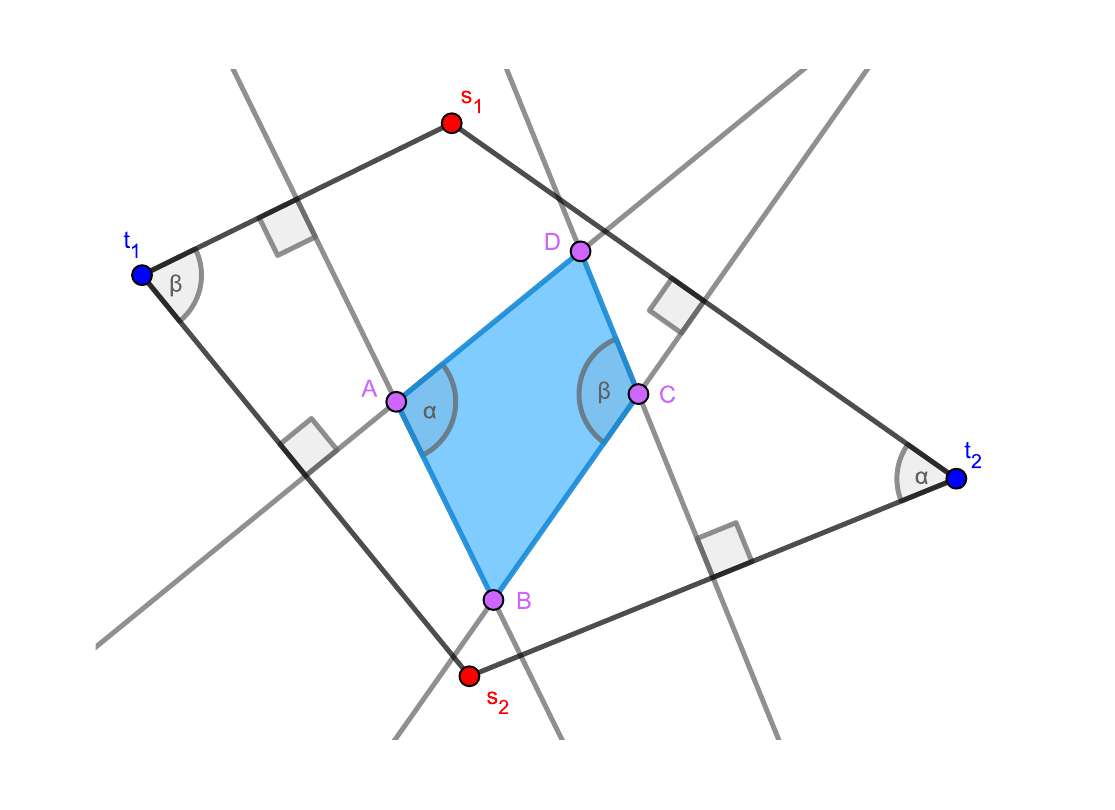}
\caption{An illustration to the proof of Proposition~\protect\ref%
{prop:cyclicneg}.}
\label{fig:impossible-cyclic}
\end{figure}
Now, let us consider the quadrilateral with vertices $s_{1},s_{2},t_{1}$ and
$t_{2}.$ Looking at Fig.~\ref{fig:impossible-cyclic}, it is easy to see that
the angles at $t_{1}$ and $C$ are equal, and so they are the angles at $%
t_{2} $ and $A.$ This means that this quadrilateral is cyclic too, that is, $%
s_{1},s_{2},t_{1}$ and $t_{2}$ lie on a circle. It follows from
Proposition~\ref{prop:cyclic} that the Voronoi cell $V_{T}(S)$ is a
singleton, which contradicts our assumption.
\end{proof}

\subsection{Halfspaces\label{Halfspaces}}

A halfspace cell can be obtained by putting the two points of $S$ on a line
perpendicular to the boundary line of the halfspace making sure that $S$ is in
the interior of the halfspace. An additional point $t$ is placed on the same
line on the opposite side of the hyperplane at the same distance from the
hyperplane as the distance to the hyperplane from the farthest point in $S$
(see Fig.~\ref{fig:half-space}). \begin{figure}[th]
\centering \includegraphics[width=0.7\textwidth]{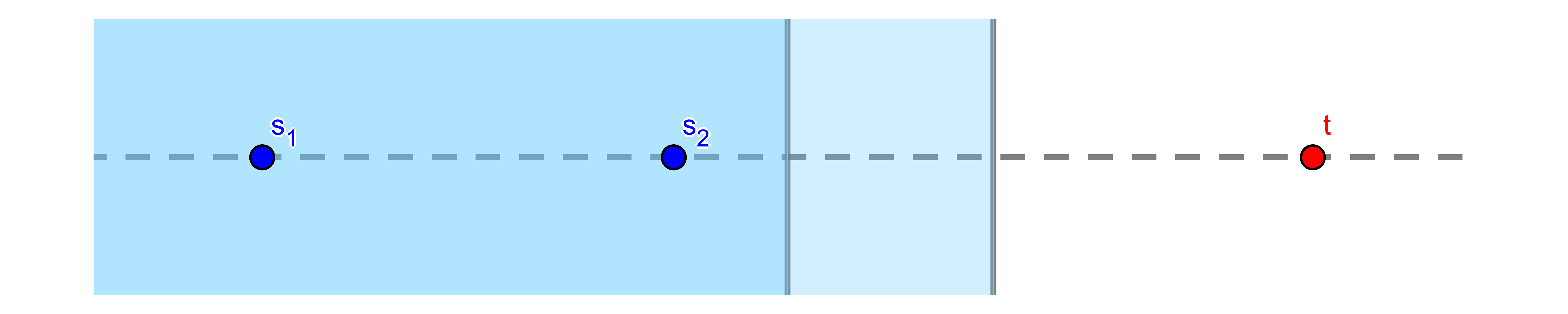}\caption{The
intersection of the two halfspaces is $V_{T}({s_{1},s_{2}})$.}%
\label{fig:half-space}%
\end{figure}We hence conclude that a halfspace can be constructed using
$|S|=2$ and $|T|=3$. Observe that it is also possible to do the same
construction for $|S|\in\{2,3\}$ and $|T|=4$. We prove this explicitly in the
next statement.

\begin{proposition}
\label{prop:half-space}

Let $F\subset\R^{n}$ be a halfspace. Then for any two integers
$\tau>\sigma\geq1$ there exist {$S\subset T\subset\R^{n}$}, with
$|S|=\sigma$ and $|T|=\tau$, such that $V_{T}(S) = F$.
\end{proposition}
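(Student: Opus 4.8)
The plan is to give an explicit construction that places $\sigma$ points of $S$ and $\tau-\sigma$ points of $T\setminus S$ so that all the defining halfspaces of $V_T(S)$ collapse onto a single one, namely the given halfspace $F$. Without loss of generality I would take $F=\{x\in\R^n:\langle e_n,x\rangle\le 0\}$, where $e_n$ is the last coordinate vector, so that the boundary hyperplane is $\{x_n=0\}$ and $F$ lies below it. The key idea is to stack all the sites along the $x_n$-axis: put every point of $S$ strictly below the hyperplane and every point of $T\setminus S$ strictly above it, arranged so that each pairwise bisector between a point of $S$ and a point of $T\setminus S$ is exactly the hyperplane $\{x_n=0\}$. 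By Proposition~\ref{prop:inequalities}, the cell is the intersection of the $\sigma(\tau-\sigma)$ halfspaces coming from these bisectors, so if every bisector coincides with $\{x_n=0\}$ and points the right way, the intersection is precisely $F$.

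The concrete recipe I would use is as follows. Choose a strictly increasing sequence of positive reals and set the points of $S$ at heights $-a_1,\dots,-a_\sigma$ on the axis, i.e. $s_i=(0,\dots,0,-a_i)$, and the points of $T\setminus S$ at the mirror-image heights, $t_j=(0,\dots,0,a_j')$ for suitable positive $a_j'$. The bisector between $s_i=(0,-a_i)$ and $t_j=(0,a_j')$ (writing only the last coordinate) is the hyperplane $\{x_n=(a_j'-a_i)/2\}$, and the corresponding inequality $\|x-s_i\|\le\|x-t_j\|$ reduces to $\langle t_j-s_i,x\rangle\le\frac12(\|t_j\|^2-\|s_i\|^2)$, i.e. $x_n\le (a_j'-a_i)/2$. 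To force every such bisector to be exactly $\{x_n=0\}$ with the inequality $x_n\le 0$, I would simply take \emph{all} the points at the same absolute height: $a_i=a$ for all $i$ and $a_j'=a$ for all $j$, with $a>0$. Then $s_i=(0,\dots,0,-a)$ and $t_j=(0,\dots,0,a)$, every bisector is $\{x_n=0\}$, and each inequality becomes $x_n\le 0$, so that $V_T(S)=\{x:x_n\le 0\}=F$.

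There is one immediate obstacle with the naive version above: the construction as stated makes all the $s_i$ coincide and all the $t_j$ coincide, violating the requirement that $T$ be a set of $\tau$ \emph{distinct} points. The main technical step is therefore to perturb the heights while keeping the resulting cell equal to $F$. I would resolve this by spreading the points slightly in the \emph{horizontal} directions $x_1,\dots,x_{n-1}$ rather than vertically: place all sites at last coordinate $-a$ (for $S$) and $+a$ (for $T\setminus S$), but give them distinct horizontal positions of small norm. The subtlety is that an arbitrary horizontal perturbation tilts the bisectors, so I would instead arrange $S$ and $T\setminus S$ symmetrically about the hyperplane so that each matched pair $s_i,t_j$ remains reflections of one another across $\{x_n=0\}$; reflection symmetry guarantees the bisector is exactly $\{x_n=0\}$ regardless of the horizontal coordinates, and the inequality direction (sites below, complement above) guarantees it is the correct halfspace. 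Concretely, I would put $s_i=(p_i,-a)$ and $t_j=(q_j,a)$ and require the horizontal offsets $p_i,q_j$ to be chosen with $\|p_i\|,\|q_j\|$ small and all $\tau$ points distinct; one clean choice is to take the horizontal parts to be distinct small multiples of a fixed direction, or, when $n=1$ (so no horizontal room exists), to observe that the points must be distinct along the line and verify directly that taking $|S|=\sigma$ points at $-a$ and $\tau-\sigma$ points at $+a$ still yields $F$ after checking the resulting inequalities all reduce to $x_1\le 0$. I expect the reflection-symmetry argument to be the crux: once each pair is symmetric across the boundary hyperplane, computing the bisector and confirming $V_T(S)=F$ is a short computation using the representation~\eqref{eq:linrep}, and the only care needed is to ensure distinctness of all $\tau$ sites, which the horizontal (or, in the degenerate $n=1$ case, the stacked) perturbation handles.
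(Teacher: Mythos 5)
Your opening idea is the right one, and you correctly spot the distinctness obstacle, but the fix you propose does not work. The cell $V_{T}(S)$ is the intersection of the halfspaces coming from \emph{every} pair $(s,t)\in S\times(T\setminus S)$, not only from ``matched'' pairs. With $s_{i}=(p_{i},-a)$ and $t_{j}=(q_{j},a)$ and $p_{i}\neq q_{j}$, the inequality from \eqref{eq:linrep} reads $\langle q_{j}-p_{i},x'\rangle+2a\,x_{n}\leq\frac{1}{2}\left(\Vert q_{j}\Vert^{2}-\Vert p_{i}\Vert^{2}\right)$ (writing $x=(x',x_{n})$), whose boundary hyperplane has normal $(q_{j}-p_{i},2a)$ and is therefore \emph{not} parallel to $\{x_{n}=0\}$. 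A halfspace whose boundary is not parallel to that of $F$ can never contain $F$, so each such cross-pair constraint cuts into $F$ and the resulting cell is strictly smaller than $F$. Forcing every cross pair to be a reflection pair would require all the $p_{i}$ to coincide and all the $q_{j}$ to coincide, which is exactly the degeneracy you were trying to avoid. Your $n=1$ fallback has the same problem in sharper form: $\sigma\geq2$ distinct points on a line cannot all sit at the same coordinate $-a$.

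The repair is to perturb along the \emph{normal} direction rather than horizontally, which is what the paper does: with $F=\{x:\langle d,x\rangle\leq\gamma\}$, $\Vert d\Vert=1$, and $x_{0}$ on the boundary, take $s_{0}=x_{0}-d$, $t_{0}=x_{0}+d$, and then $s_{i}=x_{0}-\alpha_{i}d$ with $0<\alpha_{i}<1$ and $t_{j}=x_{0}+\beta_{j}d$ with $\beta_{j}>1$, all parameters distinct. All sites are collinear along the normal, so every bisector is parallel to the boundary of $F$; each inequality reduces to $\langle d,x\rangle-\gamma\leq\frac{1}{2}(\beta_{j}-\alpha_{i})$, the minimum of the right-hand sides is $0$ (attained by the pair $(s_{0},t_{0})$), and all other constraints are redundant, giving $V_{T}(S)=F$ exactly. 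You should replace your horizontal spreading by this normal-direction spreading; the rest of your outline then goes through.
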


\begin{proof}
Note that any halfspace $F$ can be represented as $F=\{x\in \R%
^{n}\,|\,\langle d,x\rangle \leq \gamma \}$ for some $d$, $\Vert d\Vert =1,$
and $\gamma \in \R$.
Choose any $x_{0}$ such that $\langle d,x_{0}\rangle =\gamma $, and let $%
S:=\{s_{0},\dots ,s_{m}\}$ and $T:=\{t_{0},\dots ,t_{p}\}\cup S$, where $%
m=\sigma -1$, $p=\tau -\sigma -1$,
\begin{equation*}
s_{0}:=x_{0}-d,\text{ }t_{0}:=x_{0}+d,
\end{equation*}%
\begin{equation}
s_{i}=x_{0}-\alpha _{i}d,\;0<\alpha _{i}<1\;\forall i\in \{1,\dots
,m\},\; t_{j}:=x_{0}+\beta _{j}d,\,\beta _{j}>1\; \forall j\in
\{1,\dots ,p\},  \label{facts}
\end{equation}%
and the constants $\alpha _{i}$ and $\beta _{j}$ are all different (to
ensure that the sites do not coincide). We will next show that
\begin{equation*}
V_{T}(S)=\{x\in \R^{2}\,|\,\langle d,x\rangle \leq \gamma \}.
\end{equation*}%
We have, from the representation in Proposition~\ref{prop:inequalities},
\begin{equation*}
V_{T}(S)=\bigcap_{\substack{ i\in \{0,1,\dots ,m\}  \\ j\in \{0,1,\dots ,p\}
}}\left\{ x\,:\,\langle t_{j}-s_{i},x\rangle \leq \frac{1}{2}\left( \Vert
t_{j}\Vert ^{2}-\Vert s_{i}\Vert ^{2}\right) \right\} .
\end{equation*}%
By (\ref{facts}), the inequalities in the latter expression can be rewritten
as
\begin{equation*}
(\beta _{j}+\alpha _{i})\langle d,x\rangle \leq \frac{1}{2}\left( (\beta
_{j}-\alpha _{i})(\beta _{j}+\alpha _{i})+(\beta _{j}+\alpha _{i})\langle
d,x_{0}\rangle \right) ,\; \begin{array}{l}\forall i\in \{0,\dots ,m\},\\\forall
j\in \{0,\dots ,p\},\end{array}
\end{equation*}%
where $\alpha _{0}=\beta _{0}=1$. Dividing by the factor $\beta _{j}+\alpha
_{i}>0$, we have
\begin{equation*}
\langle d,x\rangle -\langle d,x_{0}\rangle \leq \frac{\beta _{j}-\alpha _{i}%
}{2};
\end{equation*}%
hence,%
\begin{equation*}
V_{T}(S)=\left\{ x\,:\,\langle d,x\rangle -\gamma \leq \frac{1}{2}\cdot
\min_{i,j}(\beta _{j}-\alpha _{i})\right\} =\{x\,:\,\langle d,x\rangle \leq
\gamma \}.
\end{equation*}%
The latter set is precisely the halfspace that we were aiming for.
\end{proof}

\subsection{Intersections of Parallel Halfspaces}

We consider a set $F\subset\R^{n}$ represented by the inequalities
$\alpha\leq\left\langle d,x\right\rangle \leq\beta,$ with $\left\Vert
d\right\Vert =1$ and $\alpha<\beta.$ One can easily check that $F=V_{T}\left(
S\right)  $ for $T:=\left\{  s_{1},s_{2},t_{1},t_{2}\right\}  $ and
$S:=\left\{  s_{1},s_{2}\right\}  $ with, for instance, $s_{1}:=\frac
{3\alpha+\beta}{4}d,$ $s_{2}:=\frac{\alpha+\beta}{2}d,$ $t_{1}:=\frac
{3\alpha-\beta}{2}d$ and $t_{2}:=\frac{7\beta-3\alpha}{4}d.$ We have shown the
following result.

\begin{proposition}
\label{prop:strip} Let $F\subset\R^{n}$ be an intersection of two
parallel halfspaces with opposite normals, such that $F$ has a nonempty
interior. Then there exist {$S\subset T\subset\R^{n}$}, with $|S|=2$
and $|T|=4$, such that $V_{T}(S)=F$.
\end{proposition}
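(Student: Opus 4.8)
The plan is to verify directly that the explicit choice of sites listed above realizes the strip $F$. The decisive structural fact is that all four points $s_1,s_2,t_1,t_2$ are scalar multiples of the unit vector $d$, so the whole configuration is collinear. This collapses the problem to one dimension: every bisector hyperplane between a pair $(s_i,t_j)$ is orthogonal to $d$, so each of the four inequalities in the representation \eqref{eq:linrep} depends on $x$ only through the scalar $\langle d,x\rangle$.

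First I would write $s_i=\sigma_i d$ and $t_j=\tau_j d$, with $\sigma_1=\tfrac{3\alpha+\beta}{4}$, $\sigma_2=\tfrac{\alpha+\beta}{2}$, $\tau_1=\tfrac{3\alpha-\beta}{2}$, $\tau_2=\tfrac{7\beta-3\alpha}{4}$, and substitute into Proposition~\ref{prop:inequalities}. Using $\Vert d\Vert=1$, the inequality for the pair $(s_i,t_j)$ becomes $(\tau_j-\sigma_i)\langle d,x\rangle\leq\tfrac{1}{2}(\tau_j-\sigma_i)(\tau_j+\sigma_i)$. Dividing by the nonzero factor $\tau_j-\sigma_i$, and reversing the inequality when this factor is negative, each pair reduces to a single one-sided bound on $\langle d,x\rangle$ at the midpoint $\tfrac{1}{2}(\sigma_i+\tau_j)$.

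Next I would record the ordering of the scalars, which follows from $\alpha<\beta$ by elementary estimates: $\tau_1<\alpha<\sigma_1<\sigma_2<\beta<\tau_2$. From this I read off that the pairs $(s_1,t_1)$ and $(s_2,t_1)$ produce lower bounds, while $(s_1,t_2)$ and $(s_2,t_2)$ produce upper bounds. Evaluating the four midpoints shows that $(s_2,t_1)$ gives exactly $\langle d,x\rangle\geq\alpha$ and $(s_1,t_2)$ gives exactly $\langle d,x\rangle\leq\beta$, whereas the remaining two bounds, namely $\langle d,x\rangle\geq\tfrac{9\alpha-\beta}{8}$ and $\langle d,x\rangle\leq\tfrac{9\beta-\alpha}{8}$, satisfy $\tfrac{9\alpha-\beta}{8}<\alpha$ and $\tfrac{9\beta-\alpha}{8}>\beta$ and are therefore strictly weaker, hence redundant.

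Intersecting the two binding constraints then yields $V_T(S)=\{x:\alpha\leq\langle d,x\rangle\leq\beta\}=F$, as desired. I do not expect a genuine obstacle here: collinearity turns the four halfspace conditions into four scalar inequalities, and the entire argument amounts to checking the signs of the coefficients and computing the four midpoint values. The only point that requires a moment's care is confirming that the two ``outer'' bounds are indeed redundant rather than binding, which is exactly the content of the two strict inequalities above.
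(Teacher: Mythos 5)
Your proposal is correct and follows exactly the paper's approach: the paper exhibits the same four collinear sites $s_{1}=\frac{3\alpha+\beta}{4}d$, $s_{2}=\frac{\alpha+\beta}{2}d$, $t_{1}=\frac{3\alpha-\beta}{2}d$, $t_{2}=\frac{7\beta-3\alpha}{4}d$ and simply asserts that "one can easily check" that $V_{T}(S)=F$. You have carried out that check in full, and your sign analysis, the midpoint values $\alpha$, $\beta$, $\frac{9\alpha-\beta}{8}$, $\frac{9\beta-\alpha}{8}$, and the redundancy of the two outer bounds are all correct.
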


Note that it is impossible to produce a strip with $|T|-|S|=1$: indeed, it is
clear from the fact that all constraints are defined by parallel lines that
all vectors $s-t$, $s\in S$, $t\in T$ should be colinear, lying on some line
orthogonal to the inequalities. Now, if for the unique $t\in T\setminus S$ we
have $t\in\conv S$, then the cell is empty by Corollary~\ref{prop:emptycell}.
However if $t\notin\conv S$, then the cell has to be a halfspace. Hence the
only possibility is $|T|=4$, $|S|=2$.

\subsection{Wedges\label{Wedges}}

Now, let us have a wedge $F:=\left\{  x\in\R^{n}:\left\langle
c_{i},x\right\rangle \leq\alpha_{i},\text{ }i=1,2\right\}  ,$ with $c_{1}$ and
$c_{2}$ being linearly independent$.$ Let us take an arbitrary point from
$t\in\interior F^{-}:=\left\{  x\in\R^{n}:\left\langle c_{i}%
,x\right\rangle \geq\alpha_{i},\text{ }i=1,2\right\}  $ and construct the two
symmetric points $s_{1}$ and $s_{2}$\ with respect to both hyperplanes
defining $F.$ If $T:=\left\{  s_{1},s_{2},t\right\}  $ and $S:=\left\{
s_{1},s_{2}\right\}  ,$ then $V_{T}\left(  S\right)  =F_{T}(t)=F.$%
\begin{figure}[th]
\centering \includegraphics[width=0.8\textwidth]{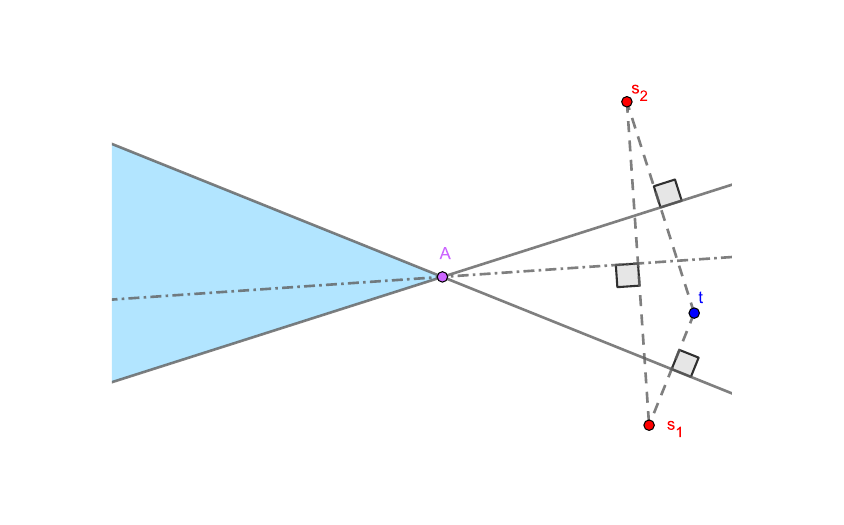}\caption{Construction
of the set of sites for a wedge}%
\label{fig:wedge-construction}%
\end{figure}

It is possible to add any number of more sites $s$ to $S$ in such a way that
the halfspaces $\Vert s-x\Vert\leq\Vert t-x\Vert$ include the original wedge.

\begin{proposition}
\label{prop:wedge-four} Let
\[
F:=\left\{  x\in\R^{n}\,|\,\langle v_{i},x\rangle\leq b_{i},\quad i=1,2\right\}  ,
\]
with $v_{1,}v_{2}\in\R^{n}$ linearly independent unit vectors and
$b_{1},b_{2}\in\R.$ Then there exist $S\subset T\subset\R%
^{n},$ with $|S|=2$ and $|T|=4,$ such that $V_{T}(S)=F$.
\end{proposition}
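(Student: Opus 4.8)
The plan is to reduce Proposition~\ref{prop:wedge-four} (a wedge on four sites) to the already-established three-site construction described immediately above it in the Wedges subsection, and then to \emph{split} one of the sites into two in a way that adds a redundant inequality. Concretely, the preceding discussion shows that the wedge $F$ arises as $V_{T_0}(S_0)=F$ with $T_0=\{s_1,s_2,t\}$ and $S_0=\{s_1,s_2\}$, where $s_1,s_2$ are the mirror images of a single auxiliary point $t\in\interior F^-$ across the two bounding hyperplanes of $F$. So I already have a wedge on three sites; the whole task is to upgrade $|T|$ from $3$ to $4$ while keeping $|S|=2$, i.e. to introduce a fourth site $t'$ (a second point in $T\setminus S$) whose presence does not change the cell.

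First I would recall, via Proposition~\ref{prop:inequalities}, that $F=V_{T}(S)$ for $T=\{s_1,s_2,t,t'\}$ is the intersection of the four halfspaces indexed by the pairs $(s_i,t)$ and $(s_i,t')$, $i=1,2$. The two halfspaces from $(s_1,t)$ and $(s_2,t)$ already cut out $F$ exactly (this is the content of the three-site construction). Hence it suffices to choose $t'$ so that the two additional inequalities, coming from $(s_1,t')$ and $(s_2,t')$, are \emph{nonessential}. The cleanest way to guarantee this is to place $t'$ so that each of these two new inequalities coincides with (defines the same halfspace as) one of the two existing ones. This is exactly the situation covered by Proposition~\ref{prop:noflats}: if the bisector inequality for $(s_1,t')$ defines the same halfspace as that for $(s_2,t)$, and the bisector inequality for $(s_2,t')$ defines the same halfspace as that for $(s_1,t)$, then both new inequalities may be dropped, leaving $V_T(S)=F$.

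The concrete step, then, is to exhibit such a $t'$. I would look for $t'$ lying on the same line through the original auxiliary point used to generate $s_1,s_2$, or more directly specify $t'$ as a suitable reflection/translate so that its perpendicular bisector with $s_1$ reproduces the bounding line of $F$ associated with $s_2$ and $t$, and vice versa. In the planar wedge picture this amounts to a symmetric placement: since $s_1,s_2,t$ already lie on a common circle centred at the wedge apex (each $s_i$ being a reflection of $t$), I can take $t'$ to be a fourth point on that same circle chosen so that the four bisector pairs line up as required. Because all four points then lie on one circle, the right-hand sides of \eqref{eq:linrep} share the homogeneous structure exploited in Proposition~\ref{prop:cyclic}, which is precisely the coincidence-of-halfspaces condition \eqref{eq:lindep} of Proposition~\ref{prop:noflats}.

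The main obstacle is the verification that the two new inequalities are genuinely redundant rather than cutting into $F$: I must check both the direction of the normal vectors (the factor $\alpha>0$ in \eqref{eq:lindep}) and the matching of the constant terms, since a wrong sign or offset would either shrink the cell or leave a spurious facet. I expect this to reduce, after invoking Proposition~\ref{prop:noflats}, to a short computation confirming that $t'-s_1$ is a positive multiple of $t-s_2$ and $t'-s_2$ a positive multiple of $t-s_1$, together with the corresponding norm identities; the circle/equidistance structure should make these identities automatic, but getting the positivity of the multipliers right is the delicate point. Once that is in hand, Proposition~\ref{prop:noflats} delivers $V_T(S)=V_{T_0}(S_0)=F$, completing the proof.
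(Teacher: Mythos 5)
Your overall strategy---start from the three-site wedge $V_{\{s_1,s_2,t\}}(\{s_1,s_2\})=F$ and adjoin a fourth site $t'$ whose two extra inequalities are redundant---is reasonable, but the specific mechanism you propose for the redundancy cannot be realized, and this is where the argument breaks. You want $t'$ such that the bisector inequality for $(s_1,t')$ defines the same halfspace as that for $(s_2,t)$, and the one for $(s_2,t')$ the same halfspace as that for $(s_1,t)$. Write these conditions out, with the wedge apex at the origin so that $s_1,s_2,t$, being reflections of one another across hyperplanes through the origin, satisfy $\Vert s_1\Vert=\Vert s_2\Vert=\Vert t\Vert$: the scalar parts force $\Vert t'\Vert=\Vert t\Vert$, and the vector parts read $t'=s_1+\alpha(t-s_2)=s_2+\gamma(t-s_1)$ with $\alpha,\gamma>0$. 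Subtracting gives $(1+\gamma)s_1-(1+\alpha)s_2=(\gamma-\alpha)t$; if $\gamma=\alpha$ this forces $s_1=s_2$, and otherwise $t$ is an affine combination of $s_1$ and $s_2$, so the three distinct points $s_1,s_2,t$ would be collinear while lying on a common circle---impossible. (In the special case of orthogonal bounding hyperplanes the two reflection conditions you mention are simultaneously satisfiable, but they yield $t'=-t$, and then $\{s_1,s_2,t,t'\}$ is exactly the cyclic-quadrilateral configuration of Proposition~\ref{prop:cyclic}: the two new inequalities point the wrong way and the cell collapses to a singleton, not the wedge.) Note also that placing all four points on one circle only makes the right-hand sides of \eqref{eq:linrep} vanish; it does not make the normals $t_j-s_i$ parallel, which is what the coincidence condition \eqref{eq:lindep} actually requires, so Proposition~\ref{prop:noflats} is never triggered by the circle structure alone.

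The paper avoids this dead end by not asking the two extra inequalities to coincide with the two essential ones: it places all four sites on a common sphere centred at the apex, namely $s_i=-v_i-2v_{i'}$ and $t_i=-v_i+2(1+\beta)v_{i'}$ with $\beta=\langle v_1,v_2\rangle$ and $i'=3-i$, checks that the inequalities for the pairs $(s_i,t_i)$ are exactly $\langle v_{i'},x\rangle\le 0$, and shows that the two cross inequalities are redundant because their normals $t_i-s_{i'}=v_i+(3+2\beta)v_{i'}$ are nonnegative combinations of $v_1$ and $v_2$ while all right-hand sides are zero. If you want to salvage your plan, you must replace ``coincidence of halfspaces'' by this weaker Farkas-type redundancy of the two new inequalities relative to the two old ones; exact coincidence is provably unattainable here.
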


\begin{proof}
Without loss of generality, we assume that $b_{i}=0$ $\left( i=1,2\right) .$
Set $\beta :=\left\langle v_{1},v_{2}\right\rangle ,$ $i^{\prime }:=3-i$ for
$i=1,2,$ and define
\begin{equation*}
s_{i}:=-v_{i}-2v_{i^{\prime }}\text{\qquad }\left( i=1,2\right) .
\end{equation*}%
%
%
%
%
%
%
%
%
Let $t_{i}$ be the point symmetric to $s_{i}$ with respect to the hyperplane
defined by $\langle x,v_{i^{\prime }}\rangle =0,$ that is,
\begin{equation*}
t_{i}:=-v_{i}+2\left( 1+\beta \right) v_{i^{\prime }}\text{\qquad }\left(
i=1,2\right) ,
\end{equation*}%
and define $S:=\left\{ s_{1},s_{2}\right\} $ and $T:=\left\{
s_{1},s_{2},t_{1,}t_{2}\right\} .$ We have $|T|=4,$ since $\beta >-1.$ Given
that
\begin{equation*}
\left\Vert t_{i}\right\Vert ^{2}=5+4\beta =\Vert s_{i}\Vert ^{2}\text{\qquad
}\left( i=1,2\right) ,
\end{equation*}%
one has
\begin{equation}
V_{T}(S)=\left\{ x\in \R^{n}\,|\,\langle t_{i}-s_{i},x\rangle \leq
0,\;\langle t_{i}-s_{i^{\prime }},x\rangle \leq 0,\text{\qquad }%
i=1,2\right\} .  \label{cell}
\end{equation}%
Hence, to prove that $V_{T}(S)=F,$ it will suffice to show that, for $i=1,2,$
the inequality $\langle x,t_{i}-s_{i}\rangle \leq 0$ is equivalent to $%
\langle x,v_{i^{\prime }}\rangle \leq 0$ and that the remaining two
inequalities in (\ref{cell}) are redundant. The first assertion is obvious,
since $t_{i}-s_{i}=2\left( 2+\beta \right) v_{i^{\prime }}$ and $\beta >-1.$
Let us now prove that the inequalities $\langle t_{i}-s_{i^{\prime
}},x\rangle \leq 0$ are redundant, that is, that they are consequences of
the system $\langle v_{i},x\rangle \leq 0$ $\left( i=1,2\right) .$ But this
is also immediate, since $t_{i}-s_{i^{\prime }}=v_{i}+\left( 3+2\beta
\right) v_{i^{\prime }}$ and $\beta >-1.$
\end{proof}

\subsection{Unbounded Polygons with Three Sides}

Our next construction is for an unbounded polygon like the shadowed one in
Fig.~\ref{fig:unbounded-pic2-construction}, \begin{figure}[th]
\centering \includegraphics[width=0.7\textwidth]{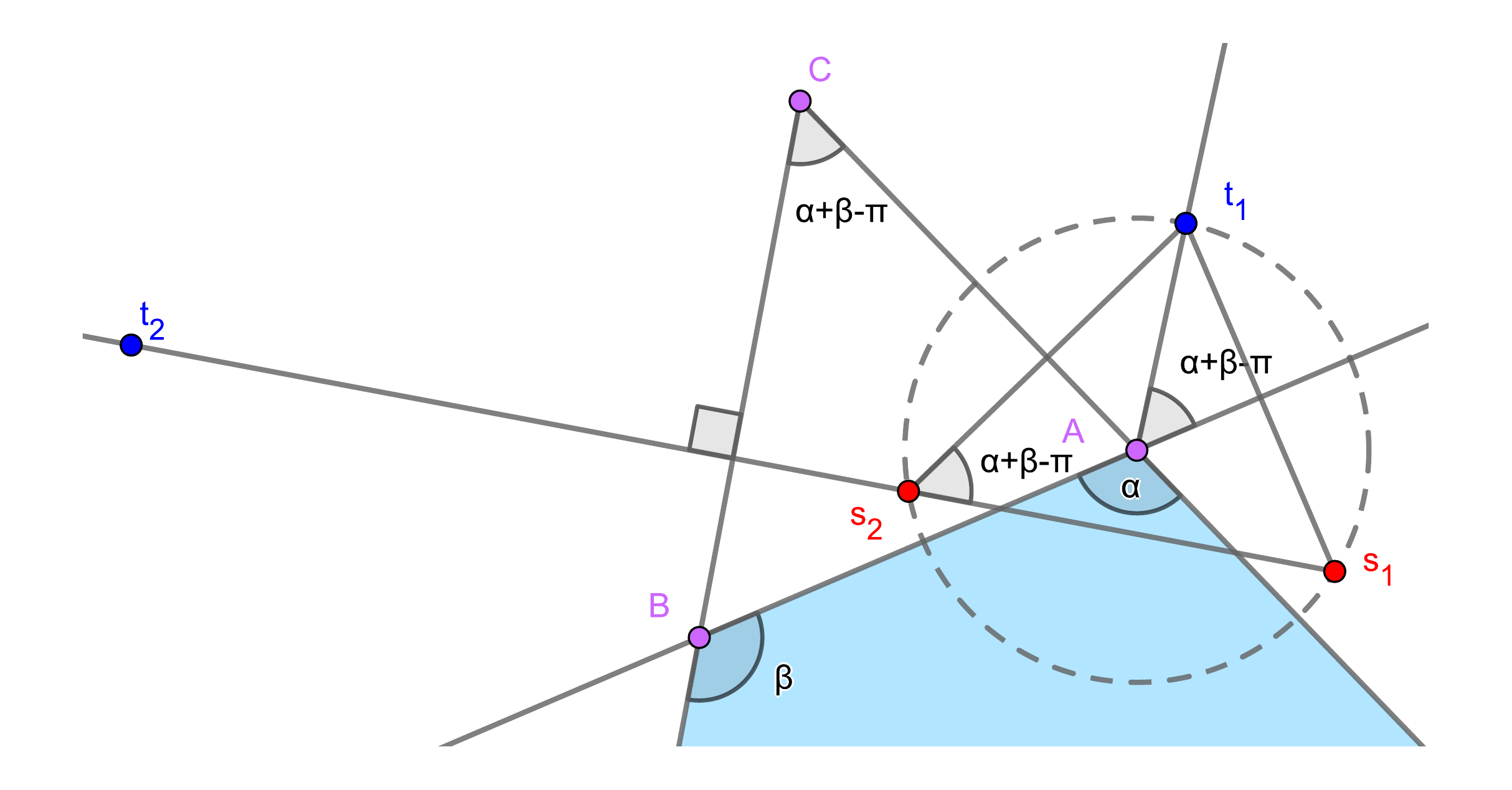}\caption{Unbounded
polygon with three sides}%
\label{fig:unbounded-pic2-construction}%
\end{figure}with three sides, which is unbouned and has non-parallel sides.

\begin{proposition}
\label{prop:alg-one} Let $F$ be an unbounded convex polygon with non-parallel
sides and two vertices. Then there exist {$S\subset T\subset\R^{2}$},
with $|S|=2$ and $|T|=4$, such that $V_{T}(S)=F$.
\end{proposition}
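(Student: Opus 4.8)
The plan is to realise $F$ by an explicit reflection construction, in the spirit of the ones used above for wedges and quadrilaterals (see Fig.~\ref{fig:unbounded-pic2-construction}). The polygon $F$ has two vertices $P,Q$ and three sides: a bounded side carried by the line $\ell_0$ through $P$ and $Q$, and two unbounded sides carried by lines $\ell_1\ni P$ and $\ell_2\ni Q$, with $\ell_1,\ell_2$ non-parallel, hence meeting at a point $O$ that, since $F$ is unbounded with only two vertices, lies on the far side of $\ell_0$ from $F$. I would pick an auxiliary point $s_1$, whose precise location is fixed only at the end, and set $t_1:=R_{\ell_0}(s_1)$, $t_2:=R_{\ell_1}(s_1)$ and $s_2:=R_{\ell_2}(t_1)$, where $R_\ell$ denotes reflection across the line $\ell$; then $S:=\{s_1,s_2\}$ and $T:=\{s_1,s_2,t_1,t_2\}$. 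Because a perpendicular bisector of two points is exactly the mirror line swapping them, the pairs $\{s_1,t_1\}$, $\{s_1,t_2\}$ and $\{s_2,t_1\}$ have bisectors $\ell_0$, $\ell_1$ and $\ell_2$ respectively, so three of the four halfspaces in the representation of Proposition~\ref{prop:inequalities} are automatically carried by the three side-lines of $F$, no matter where $s_1$ is placed.

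Two things then remain. The first is orientation: each of these three inequalities must select the side of its line on which $F$ lies. The inequality from $\{s_1,t_1\}$ keeps the side of $\ell_0$ containing $s_1$, the one from $\{s_1,t_2\}$ the side of $\ell_1$ containing $s_1$, and the one from $\{s_2,t_1\}$ the side of $\ell_2$ containing $s_2$. Thus I need $s_1$ on the $F$-side of both $\ell_0$ and $\ell_1$, and $s_2=R_{\ell_2}(R_{\ell_0}(s_1))$ on the $F$-side of $\ell_2$; the last is again an open halfplane condition on $s_1$, obtained by reflecting. Once these hold, the three active halfspaces intersect in exactly $F$, since $F$ is the intersection of its three defining halfspaces.

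The crux is to force the fourth inequality, the one from $\{s_2,t_2\}$, to be redundant. Writing $c_{ij}:=t_i-s_j$ and $\alpha_{ij}:=\tfrac12(\Vert t_i\Vert^2-\Vert s_j\Vert^2)$ as in the proof of Proposition~\ref{triangles}, the structural identities $c_{11}+c_{22}=c_{12}+c_{21}$ and $\alpha_{11}+\alpha_{22}=\alpha_{12}+\alpha_{21}$ hold. Expressing $x\in F$ through the nonnegative slacks of the three active inequalities, these identities rewrite $\langle c_{22},x\rangle\le\alpha_{22}$ as the assertion that the slack of the middle constraint never exceeds the sum of the slacks of the two unbounded ones. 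At each of $P$ and $Q$ two of the three slacks vanish, so the inequality there reduces to $0\le(\text{a nonnegative slack})$ and holds automatically; by the vertex-plus-recession decomposition of a polyhedron it therefore holds throughout $F$ precisely when $\langle c_{22},u\rangle\le 0$ for every recession direction $u$, i.e. when $t_2-s_2$ lies in the cone generated by the outward normals of the two unbounded sides. This is exactly where the argument departs from the triangle case: there the third vertex $O=\ell_1\cap\ell_2$ belongs to the cell and makes the analogous slack inequality fail, whereas here $O\notin F$ and, by Theorem~\ref{thm:boundedcell}, the recession cone of $F$ is a proper cone, leaving room for $t_2-s_2$ to fall inside the required cone. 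I expect this cone condition, and its compatibility with the three orientation conditions, to be the main obstacle: since $s_1\mapsto t_2-s_2$ is affine, the admissible set of $s_1$ is carved out by these open halfplane and cone conditions, and I would verify that their intersection is nonempty for every such $F$ by a direct coordinate computation — placing $\ell_0$ on a coordinate axis and $\ell_1,\ell_2$ symmetrically — and then appeal to the openness of all conditions, together with distinctness of the four points, to conclude $V_T(S)=F$.
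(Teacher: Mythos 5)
Your construction has the same skeleton as the paper's: a chain of reflections of one seed point across the three side-lines of $F$, so that three of the four bisectors in the representation of Proposition~\ref{prop:inequalities} coincide with the three sides, leaving only the fourth constraint to be made redundant. Your reduction of that redundancy is sound: the identity $\sigma_{11}+\sigma_{22}=\sigma_{12}+\sigma_{21}$ for the slacks, the check at the two vertices, and the resulting criterion that $t_{2}-s_{2}$ lie in the polar of the recession cone of $F$ (the cone generated by the outward normals of the two unbounded sides) are all correct, as is your observation that this is precisely where the argument escapes the obstruction of Proposition~\ref{triangles}.

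The gap is that you stop exactly at the one step that carries the content of the proposition. Existence of a seed point $s_{1}$ satisfying simultaneously the three open orientation conditions and the cone condition on the affine image $s_{1}\mapsto t_{2}-s_{2}$ is asserted (``I would verify \dots by a direct coordinate computation'') but never established, and it is not a routine compactness or genericity fact: the admissible region is an intersection of three half-planes with the preimage of a possibly narrow pointed cone under an affine map, and its nonemptiness for \emph{every} admissible $F$ is precisely what must be proved. The paper closes this by making a specific choice rather than an existence argument: with $A,B$ the vertices, $C=\ell_{1}\cap\ell_{2}$, and $\alpha,\beta$ the interior angles at $A,B$, it places the seed point on the ray through $A$ making the angle $\alpha+\beta-\pi$ with $BA$, sufficiently close to $A$; the inscribed-angle theorem (with $A$ the centre of the circle through the three reflected points) then forces the fourth bisector to be \emph{parallel} to the line $BC$ and to lie on its non-$F$ side, which gives redundancy directly without the recession-cone computation. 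To complete your version you must either carry out the coordinate verification you defer (and note that placing $\ell_{1},\ell_{2}$ ``symmetrically'' is itself a normalization you would need to justify, since reflections do not commute with arbitrary affine changes of coordinates), or import a specific choice of $s_{1}$ of the paper's kind.
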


\begin{proof}
In the notation of Fig.~\ref{fig:unbounded-pic2-construction}, where $F$ is
depicted as the shaded region, we would like to mention that $0<\alpha <\pi $%
, $0<\beta <\pi $, $\alpha +\beta >\pi $ and $\beta \geq \alpha $. If we
consider the triangle $ABC,$ the angle at $C$ is $\alpha +\beta -\pi .$ Now,
we draw a line through the point $A,$ which makes an angle $\alpha +\beta
-\pi $ with the line $BA.$ On this line we consider an arbitrary point $%
t_{1} $, sufficiently close to $A.$ Let us take the symmetric points $s_{1}$
and $s_{2}$ of $t_{1}$ with respect to $AB$ and $AC,$ respectively. We shall
prove that the line through the points $s_{1}$, $s_{2}$ is perpendicular to $%
BC.$ For this purpose, we would like to mention that the point $A$ is the
center of the circle passing through the points $s_{1}$, $s_{2}$ and $%
t_{1}.$ This means that $\sphericalangle s_{1}s_{2}t_{1}=\alpha +\beta -\pi
. $ If we consider the quadrilateral formed by the lines $BC,$ $AC,$ $%
s_{1}s_{2}$ and $t_{1}s_{2},$ we get the desired fact. At the end we
construct the site $t_{2}$ as the symmetric point of $s_{1}$ with respect to
the line $BC\ $and set $S:=\left\{ s_{1},s_{2}\right\} $ and $T:=\left\{
s_{1},s_{2},t_{1},t_{2}\right\} $.
\end{proof}

\begin{proposition}
\label{prop:impunbthree} Let $S\subset T\subset\R^{2},$ with $|S|=2$
and $|T|=4.$ Then $V_{T}(S)$ is not an unbounded polygon with parallel sides
and just two vertices.
\end{proposition}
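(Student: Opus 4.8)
The plan is to argue by contradiction. Write $S=\{s_1,s_2\}$, $T\setminus S=\{t_1,t_2\}$, and $c_{ij}:=t_i-s_j$, so that by \eqref{eq:linrep} the cell is cut out by the four inequalities with normals $c_{ij}$, and every facet normal of $V_T(S)$ is one of these four vectors. A half-strip---an unbounded polygon with two parallel sides and exactly two vertices---is nonempty, unbounded, and has two facets whose outer normals are antiparallel (its two parallel sides). Hence, if $V_T(S)$ were such a half-strip, some pair among the $c_{ij}$ would satisfy $c_{ij}=-\mu\,c_{i'j'}$ with $\mu>0$. I would show that no antiparallel pair can occur, splitting the six possible pairs according to how many indices they share.

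If the pair shares a $t$, say $c_{11}=-\mu c_{12}$, then $t_1-s_1=-\mu(t_1-s_2)$ rearranges to $t_1=\tfrac{1}{1+\mu}s_1+\tfrac{\mu}{1+\mu}s_2\in(s_1,s_2)\subset\conv S$, and Corollary~\ref{prop:emptycell} gives $V_T(S)=\emptyset$, contradicting nonemptiness. If the pair has disjoint indices, say $c_{11}=-\mu c_{22}$ or $c_{12}=-\mu c_{21}$, the same manipulation places the point $\tfrac{1}{1+\mu}t_1+\tfrac{\mu}{1+\mu}t_2$ in both open segments $(t_1,t_2)$ and $(s_1,s_2)$; their relative interiors then meet in a single point, and Proposition~\ref{prop:intersectfour} forces $V_T(S)$ to be bounded, again impossible.

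The remaining case, where the antiparallel pair shares a site, is the crux. Say $c_{11}=-\mu c_{21}$ (the pair sharing $s_1$; the pair sharing $s_2$ is symmetric). Rearranging gives $s_1=\tfrac{1}{1+\mu}t_1+\tfrac{\mu}{1+\mu}t_2\in(t_1,t_2)$, so none of the earlier dichotomies applies. Instead I would use the decomposition \eqref{eq:evidentprop}, writing $V_T(S)=V_{\{s_1,t_1,t_2\}}(s_1)\cap V_{\{s_2,t_1,t_2\}}(s_2)$. Since $s_1$ is interior to $(t_1,t_2)$, the two bisectors of $s_1$ against $t_1$ and $t_2$ are parallel and orthogonal to the line $t_1t_2$, so, choosing coordinates with $t_1t_2$ horizontal, the first factor is exactly the closed vertical strip $\Sigma$ they bound. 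Among the four normals only $c_{11},c_{21}$ are then horizontal, and since $V_T(S)\subseteq\Sigma$ forces the recession ray (hence the parallel sides) to be vertical, these two must be the parallel sides of the half-strip, i.e.\ the boundary lines of $\Sigma$. If $s_2$ lay on the line $t_1t_2$ all four sites would be collinear and $V_T(S)$ would be a strip, halfplane or line with no vertices; so $s_2,t_1,t_2$ form a genuine triangle and the second factor $V_{\{s_2,t_1,t_2\}}(s_2)$ is a two-dimensional wedge with apex at the circumcenter $O$ of that triangle.

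The key observation, which I expect to be the main obstacle, is that $O$ lies strictly between the two sides of $\Sigma$. Indeed $O$ sits on the perpendicular bisector of $t_1t_2$, whose foot is the midpoint of $t_1t_2$, while the two sides of $\Sigma$ are the orthogonals to $t_1t_2$ through the midpoints of $s_1t_1$ and $s_1t_2$; as $s_1$ is strictly inside $(t_1,t_2)$, the midpoint of $t_1t_2$ lies strictly between those two feet, so $O\in\interior\Sigma$. Therefore $O\in V_T(S)$, and at $O$ the two linearly independent $s_2$-constraints are active while both strip constraints are slack, making $O$ a vertex of $V_T(S)$ that lies strictly between the two parallel sides. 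Since every vertex of a half-strip is an endpoint of its unique finite edge and hence lies on one of the parallel sides, this is a contradiction, completing the proof. The only routine checks left are the midpoint computation locating $O$ and the linear independence of $c_{12},c_{22}$, both immediate from $t_1\neq t_2$ and the non-collinearity of $s_2,t_1,t_2$.
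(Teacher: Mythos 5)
Your proof is correct, but it follows a genuinely different route from the paper's. The paper first uses the redundancy of one of the four inequalities together with Farkas' lemma to place $t_2-s_2$ (with its affine coefficient) in the cone of the other three data vectors, and then splits on which two of the remaining normals are antiparallel, finishing the hardest case by normalizing so that $s_2,t_1,t_2$ lie on a circle centred at the origin and extracting a collinearity contradiction from the Farkas multipliers. You instead enumerate directly which pair of the six $\{c_{ij}\}$ can be antiparallel: the pairs sharing a $t$ are killed by Corollary~\ref{prop:emptycell}, the disjoint pairs by Proposition~\ref{prop:intersectfour} (a tool the paper does not invoke here), and the crux case $s_1\in(t_1,t_2)$ by the decomposition \eqref{eq:evidentprop} into a closed strip $\Sigma$ and a wedge, exhibiting the circumcentre $O$ of $s_2t_1t_2$ as a vertex of $V_T(S)$ in $\interior\Sigma$, hence off both parallel sides --- a clean geometric contradiction where the paper computes with multipliers. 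Your argument is arguably more transparent and self-contained; the paper's is shorter once Farkas' lemma has reduced the case count from six to three. One small point to patch: in the disjoint-index case your common point lies in both open segments, but the intersection $(s_1,s_2)\cap(t_1,t_2)$ is a \emph{singleton} (as Proposition~\ref{prop:intersectfour} requires) only when the two segments are not collinear; in the collinear case all four normals $c_{ij}$ are parallel, so $V_T(S)$ has no vertices and cannot be the polygon in question --- one sentence disposes of it, and the same remark already appears, correctly, in your treatment of the crux case when you rule out $s_2$ lying on the line $t_1t_2$.
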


\begin{proof}
Similarly to the proof of Proposition \ref{triangles}, suppose that $%
V_{T}\left( S\right) $ is an unbounded convex polygon with parallel sides
and just two vertices, denote $S=\{s_{1},s_{2}\}$ and $T=%
\{s_{1},s_{2},t_{1},t_{2}\},$ and suppose further, w.l.o.g., that the
redundant inequality in the linear system
\begin{equation}
\left\langle t_{i}-s_{j},x\right\rangle \leq \frac{1}{2}\left( \left\Vert
t_{i}\right\Vert ^{2}-\left\Vert s_{j}\right\Vert ^{2}\right) \text{\quad }%
\left( i,j=1,2\right)  \label{system 2}
\end{equation}%
(which has $V_{T}(S)$ as its solution set) is the one corresponding to $%
i=2=j.$ Then, by Farkas' Lemma, there exist $\gamma _{11},\gamma
_{12},\gamma _{21}\geq 0$ such that%
\begin{equation}
t_{2}-s_{2}=\gamma _{11}\left( t_{1}-s_{1}\right) +\gamma _{12}\left(
t_{1}-s_{2}\right) +\gamma _{21}\left( t_{2}-s_{1}\right)  \label{cone}
\end{equation}%
and%
\begin{equation}
\alpha _{22}\geq \gamma _{11}\alpha _{11}+\gamma _{12}\alpha _{12}+\gamma
_{21}\alpha _{21},  \label{ineq}
\end{equation}%
where $\alpha _{ij}:=\frac{1}{2}\left( \left\Vert t_{i}\right\Vert
^{2}-\left\Vert s_{j}\right\Vert ^{2}\right) .$ From (\ref{cone}) we obtain
that $t_{2}-s_{2}$ belongs to%
\begin{equation*}
K_{1}:=\cone\left\{ t_{1}-s_{1},t_{1}-s_{2},t_{2}-s_{1}\right\} .
\end{equation*}
On the other hand, two of the three vectors $t_{1}-s_{1},$ $t_{1}-s_{2}$ and
$t_{2}-s_{1}$ are exterior normals to the parallel sides, and hence they
make an angle of $\pi .$ Those two vectors cannot be $t_{1}-s_{1}$ and $%
t_{1}-s_{2},$ because with such a configuration  $V_{T}(S)$
would be empty. We will consider the two remaining cases.
We start with the case when the two vectors are $t_{1}-s_{1}$ and $%
t_{2}-s_{1},$ say $t_{1}-s_{1}=-\mu \left( t_{2}-s_{1}\right) ,$ with $\mu
>0.$ Since in this case $s_{2}$ does not belong to the straight line
determined by $s_{1},$ $t_{1}$ and $t_{2}$ (because the side of $V_{T}(S)$
contained in the line $\left\langle t_{1}-s_{2},x\right\rangle =\frac{1}{2}%
\left( \left\Vert t_{1}\right\Vert ^{2}-\left\Vert s_{2}\right\Vert
^{2}\right) $ is not parallel to the other two sides)$,$ there is a
circle containing $s_{2},$ $t_{1}$ and $t_{2},$ and we may assume,
w.l.o.g., that the center of this circle is $0_{2}.$ Then $\alpha
_{11}=\alpha _{21}>0$ and $\alpha _{12}=\alpha _{22}=0,$ thus, in view of (%
\ref{ineq}), we have $\gamma _{11}=0=\gamma _{12}$ and then, by (\ref{cone}),%
\begin{equation*}
\gamma _{12}\left( t_{1}-s_{2}\right) =t_{2}-s_{2}=-\left(
t_{1}-s_{1}\right) +t_{1}-s_{2}+t_{2}-s_{1}=t_{1}-s_{2}+\left( 1+\mu \right)
\left( t_{2}-s_{1}\right) ,
\end{equation*}%
which yields%
\begin{equation*}
t_{2}-s_{1}=\frac{\gamma _{12}-1}{1+\mu }\left( t_{1}-s_{2}\right) .
\end{equation*}%
This is impossible because the vectors $t_{1}-s_{1},$ $t_{1}-s_{2}$ and $%
t_{2}-s_{1}$ are not all colinear.
It only remains to consider the case when $t_{1}-s_{2}$ and $t_{2}-s_{1}$
are the vectors that make an angle of $\pi .$ Since $t_{1}-s_{1}$ is not
colinear with these two vectors, we have that the cones $K_{1}$ and%
\begin{equation*}
K_{2}:=\cone\left\{ -\left( t_{1}-s_{1}\right)
,t_{1}-s_{2},t_{2}-s_{1}\right\}
\end{equation*}%
are opposite halfplanes defined by the line containing the vectors $%
t_{1}-s_{2}$ and $t_{2}-s_{1}.$ The fact that $t_{2}-s_{2}\in K_{1}\cap %
\interior K_{2}$ provides a contradiction.
\end{proof}

\bigskip

We note here that it is possible to have an unbounded polygon with three sides
for the case $|S|=3$ and $|T|=4$ if and only if the unbounded sides are
non-parallel. Indeed, in this case we can first build a wedge that defines the
two unbounded sides (see Subsection \ref{Wedges}), and then add an extra site
to define the extra inequality. For the case of unbounded parallel sides, it
is clear that the point $t$ should at the same time lie outside of each of
these parallel sides, which is impossible.

\subsection{Unbounded Polygons with Four Sides}

In the next proposition we shall consider the case of an unbounded
quadrilateral in $\R^{2}$ with two parallel sides, shown in
Fig.~\ref{fig:unbounded-pic3-construction}. \begin{figure}[th]
\centering \includegraphics[width=0.6\textwidth]{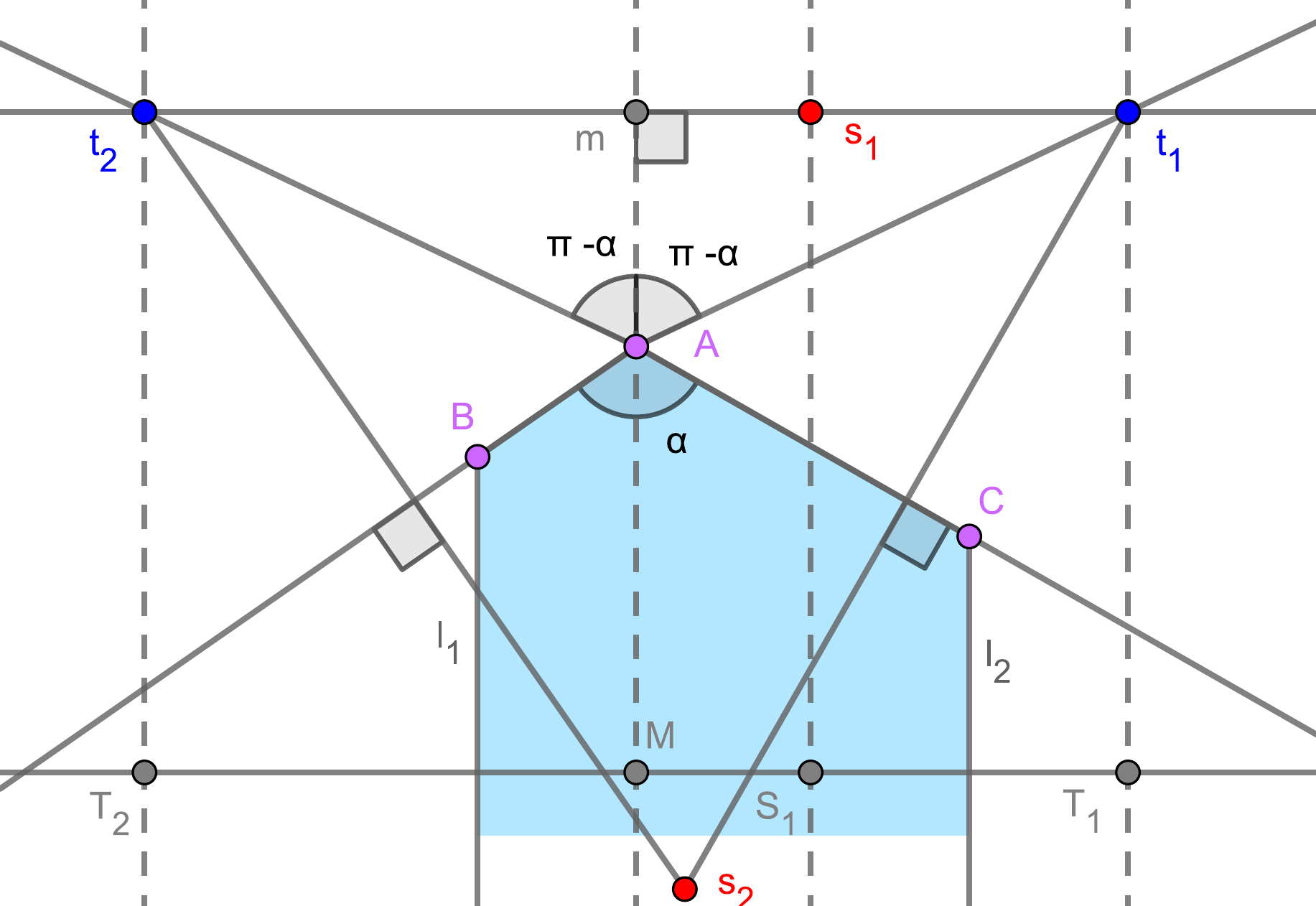}\caption{Unbounded
polygon with parallel sides}%
\label{fig:unbounded-pic3-construction}%
\end{figure}

\begin{proposition}
\label{prop:alg-two} Let $F\subset\R^{2}$ be an unbounded convex
quadrilateral with two parallel sides. Then there exist {$S\subset
T\subset\R^{2}$}, with $|S|=2$ and $|T|=4$, such that $V_{T}(S)=F$.
\end{proposition}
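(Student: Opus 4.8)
The plan is to realize $F$ by an explicit reflection construction, in the same spirit as Proposition~\ref{prop:alg-one}. In the notation of Fig.~\ref{fig:unbounded-pic3-construction}, write the two parallel (unbounded) sides of $F$ as the rays emanating from vertices $A$ and $C$, and let $P$ and $Q$ be the two remaining non-parallel sides, which meet at the third vertex $B$; thus $A$ lies on $P$ and $C$ on $Q$. After a rigid motion I may assume the two parallel sides are vertical, so that $F$ is contained in a vertical strip and opens upwards. The goal is to force the two parallel sides to be the perpendicular bisectors of $(s_1,t_1)$ and $(s_1,t_2)$, and the two non-parallel sides to be the bisectors of $(s_2,t_1)$ and $(s_2,t_2)$.

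First I would pick the site $s_1$ (two degrees of freedom) and \emph{define} $t_1$ and $t_2$ as the mirror images of $s_1$ in the two parallel lines. By construction $s_1,t_1,t_2$ are collinear on a horizontal line, the segments $s_1t_1$ and $s_1t_2$ are orthogonal to the two vertical lines and have their midpoints on them; hence the bisectors of $(s_1,t_1)$ and $(s_1,t_2)$ are exactly the two parallel sides, with the correct offsets coming for free from the bisector property. I would then set $s_2$ to be the reflection of $t_1$ in the side $P$, which makes the bisector of $(s_2,t_1)$ equal to the line containing $P$.

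The crux is to choose $s_1$ so that this same point $s_2$ is \emph{simultaneously} the reflection of $t_2$ in the side $Q$; then the bisector of $(s_2,t_2)$ is the line containing $Q$ as well, and all four sides of $F$ are accounted for. I would establish this consistency by an angle chase. Since $A$ lies on both the bisector of $(s_1,t_1)$ and the prospective bisector of $(s_2,t_1)$, it is equidistant from $s_1,t_1,s_2$, so these three points lie on a circle centred at $A$; likewise $C$ is the centre of a circle through $s_1,t_2,s_2$. Using the inscribed angle theorem on these two circles, exactly as in the circle-centre argument of Proposition~\ref{prop:alg-one}, the interior angles of $F$ at $A$ and at $C$ translate into the two reflection conditions on $s_2$. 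Equivalently, the map sending $s_1$ to $s_2$ is a rotation about $A$ along one route and a rotation about $C$ along the other, and equating the two images is what pins down the admissible position of $s_1$. I expect this coincidence---that the two independent reflections deliver the same $s_2$---to be the main obstacle, precisely as it was the technical heart of Proposition~\ref{prop:alg-one}.

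Finally I would record the routine verifications. Setting $S:=\{s_1,s_2\}$ and $T:=\{s_1,s_2,t_1,t_2\}$, the four bisectors coincide with the four lines bounding $F$, so by Proposition~\ref{prop:inequalities} it remains only to check that each of the four halfspaces in \eqref{eq:linrep} is the one containing $F$, which follows by testing a single point of $\interior F$, and that none of the four inequalities is redundant, so that $V_T(S)$ has all four sides. Note that here $s_1\in[t_1,t_2]\subset\conv(T\setminus S)$, but this does not force the cell to be empty, since the criterion of Corollary~\ref{prop:emptycell} concerns a point of $T\setminus S$ lying in $\conv S$, not the reverse. Hence $V_T(S)=F$, as required.
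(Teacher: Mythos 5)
Your setup matches the paper's in outline: the two parallel sides are realised as the perpendicular bisectors of $(s_1,t_1)$ and $(s_1,t_2)$ with $t_1,t_2$ the reflections of a point $s_1$ of the open band in the two parallel lines, and the two bounded sides as the bisectors of $(s_2,t_1)$ and $(s_2,t_2)$; your remark that $s_1\in(t_1,t_2)$ does not trigger Corollary~\ref{prop:emptycell} is also correct. The genuine gap is exactly where you write that you \emph{expect} the coincidence of the two candidate definitions of $s_2$ to be the main obstacle: you never establish it. Asserting that an angle chase works ``exactly as in Proposition~\ref{prop:alg-one}'' does not close this, because that argument cannot be transported verbatim: there the two unbounded sides meet at a finite point, which serves as the centre of the circle through $s_1,s_2,t_1$ and drives the inscribed-angle computation, whereas here those sides are parallel and that centre has escaped to infinity. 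The paper's proof replaces it by a different device: it positions $s_1,t_1,t_2$ on a common perpendicular to the band whose distance from the middle vertex (the paper's $A$, your $B$) is tuned so that $\sphericalangle t_1At_2=2\pi-2\alpha$; the midpoint of $t_1t_2$ then lies at the same height as $A$, forcing $\Vert A-t_1\Vert=\Vert A-t_2\Vert$, after which $s_2$ is defined by a \emph{single} reflection in one bounded side and the fourth bisector is verified via the inscribed angle $\sphericalangle t_2s_2t_1=\pi-\alpha$ in the one circle centred at $A$ through $t_1,t_2,s_2$. Your two circles are centred at the outer vertices instead, and the claim that the interior angles there ``translate into the two reflection conditions'' is precisely the assertion that requires proof.

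Your rotation reformulation can in fact be pushed through: $R_PR_{l_1}$ and $R_QR_{l_2}$ are rotations about the two outer vertices, and $\left(R_QR_{l_2}\right)^{-1}R_PR_{l_1}$ is a rotation by twice the angle between $P$ and $Q$, which is nonzero since these sides are not parallel, so a unique admissible $s_1$ exists. But even granting that, the proposal omits the verifications that make $V_T(S)=F$ rather than some other polyhedron: that this fixed point lies strictly inside the open band (otherwise the two parallel-side halfspaces face away from $F$, or $t_i=s_1$), that $s_2$ falls on the $F$-side of both $P$ and $Q$, that the four sites are pairwise distinct, and that none of the four inequalities in \eqref{eq:linrep} is redundant. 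As written, your argument identifies the correct necessary conditions on the configuration but does not construct it, so the heart of the proof is missing.
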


\begin{proof}
In the notation of Fig.~\ref{fig:unbounded-pic3-construction}, where $F$ is
depicted as the unboubed convex polygon with vertices $A,$ $B$ and $C,$ we
have $l_{1}\parallel l_{2}$ and $0<\alpha <\pi ,$ and the point $A$ belongs
to the interior of the parallel band. First, we move, parallel to the line $%
l_{1},$ the point $A$ to an arbitrary point $M$. Through $M$ we draw a line
perpendicular to $l_{1}.$ Take a point $S_{1}$ on the same line, with the
same distance to $l_{2}$ as $M$ is to $l_{1}.$ Now, we consider the
symmetrical point $T_{1}$ and $T_{2}$ of $S_{1}$ with respect to $l_{2}$ and
$l_{1},$ respectively$.$ In this way $M$ is the midpoint of the segment $%
T_{1}T_{2}$. Next we translate the line through the points $T_{1}$ and $T_{2}$
in such a way that, defining $t_{1},$ $t_{2},$ $m$ and $s_{1}$ as the
orthogonal projections of $T_{1},$ $T_{2},$ $M$ and $S_{1}$ onto the
resulting translated line, one has $\sphericalangle $ $t_{1}At_{2}=2\pi
-2\alpha $ and $\sphericalangle $ $mAt_{2}=\pi -\alpha .$ For the last site,
we take $s_{2}$ as the symmetric point of $t_{2}$ with respect to the line
through the points $A$ and $B.$ We shall prove that the line through the
points $t_{1}$, $s_{2}$ is perpendicular to $AC.$ For this purpose, we
observe that the point $A$ is the center of the circle passing through
the points $t_{1}$, $t_{2}$ and $s_{2}.$ This means that $\sphericalangle
t_{2}s_{2}t_{1}=\pi -\alpha .$ If we consider the quadrilateral formed by
the lines $BA,$ $AC,$ $t_{1}s_{2}$ and $t_{2}s_{2},$ we get the desired
fact. At the end, after having found the desired sets $S:=\left\{
s_{1},s_{2}\right\} $ and $T:=\left\{ s_{1},s_{2},t_{1},t_{2}\right\} $, we
would like to mention that this construction is impossible if $\alpha =\pi .$
\end{proof}

\bigskip

We next consider the case of an arbitrary unbounded convex quadrilateral with
nonparallel sides, presented in Fig.~\ref{fig:quadrilateral-general-unbounded}%
. \begin{figure}[th]
\centering \includegraphics[width=0.9\textwidth]{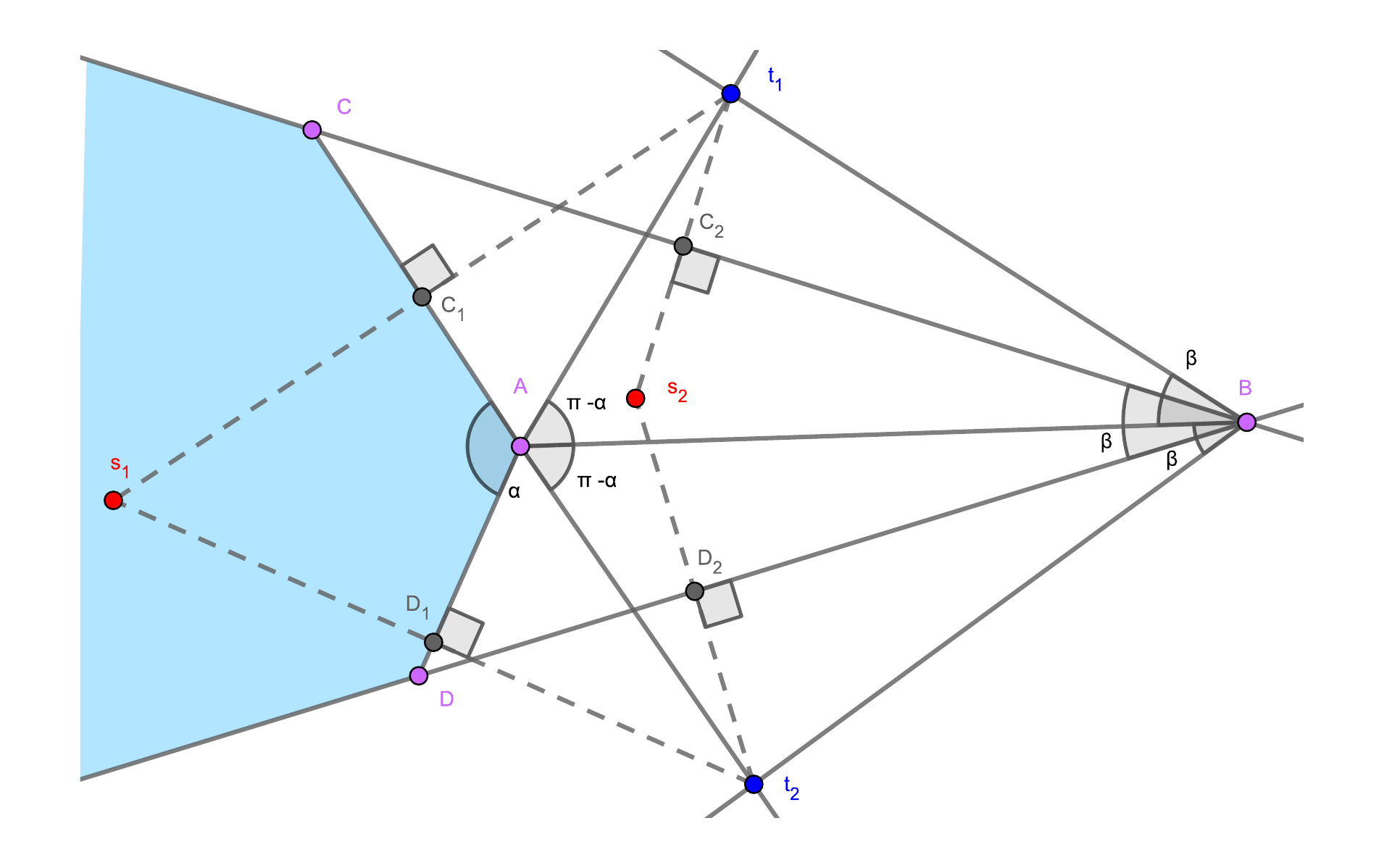}\caption{Configuration
for a general unbounded quadrilateral}%
\label{fig:quadrilateral-general-unbounded}%
\end{figure}

\begin{proposition}
\label{prop:alg-three} Let $F$ be an unbounded convex quadrilateral with no
sides parallel. Then there exist {$S\subset T\subset\R^{2}$}, with
$|S|=2$ and $|T|=4$, such that $V_{T}(S)=F$.
\end{proposition}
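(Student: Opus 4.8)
The plan is to imitate the reflection-plus-inscribed-angle constructions of Propositions~\ref{prop:quadrilateral} and~\ref{prop:alg-one}, using the fact that every side of a cell $V_T(S)$ with $S=\{s_1,s_2\}$ and $T\setminus S=\{t_1,t_2\}$ is the perpendicular bisector of one of the four pairs $\{s_i,t_j\}$. Such an unbounded $F$ has exactly three vertices, the two bounded sides being segments and the two unbounded, non-parallel sides being rays. Label the vertices $A,B,C$ so that the bounded sides are $[A,B]$ and $[B,C]$ and the rays $\rho_A,\rho_C$ emanate from $A$ and $C$. Reading the four facets around $\partial F$ and recalling that consecutive facets of $V_T(S)$ differ in exactly one index, we may name the sites so that $[A,B]$ is the bisector of $\{s_2,t_1\}$, $[B,C]$ the bisector of $\{s_2,t_2\}$, and $\rho_A,\rho_C$ the bisectors of $\{s_1,t_1\}$ and $\{s_1,t_2\}$; the orientation (i.e.\ which pair is the nearer set $S$) will be fixed by insisting that the sites of $S$ lie on the interior side of each defining line, which by Proposition~\ref{prop:inequalities} gives $F$ rather than its complement. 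These assignments force $A,B,C$ to be the circumcenters of $\{s_1,s_2,t_1\}$, $\{s_2,t_1,t_2\}$, $\{s_1,s_2,t_2\}$ respectively, hence that $t_1$ is the mirror image of $s_2$ in the line $AB$, $t_2$ the mirror image of $s_2$ in $BC$, and $s_1$ the mirror image of $s_2$ in the diagonal $AC$.

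I would then reverse-engineer $s_2$ from these mirror relations. Once $s_2$ is chosen and $t_1,t_2,s_1$ are defined as the three indicated reflections, all circumcenter conditions hold automatically, because reflecting across a line through a vertex preserves the distance to that vertex: $\dist(A,s_1)=\dist(A,s_2)=\dist(A,t_1)$, $\dist(B,s_2)=\dist(B,t_1)=\dist(B,t_2)$ and $\dist(C,s_1)=\dist(C,s_2)=\dist(C,t_2)$. In particular the two rays already pass through $A$ and $C$, and the segment sides are already the correct bisectors; what remains is to match the \emph{directions}, i.e.\ to arrange $s_1t_1\perp\rho_A$ and $s_1t_2\perp\rho_C$. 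Since $s_1$ and $t_1$ are the reflections of $s_2$ in the two lines $AC$ and $AB$ through $A$, they lie on a common circle about $A$, separated by the central angle $2\,\angle CAB$, so the perpendicular bisector of the chord $s_1t_1$ is the line through $A$ bisecting $\angle s_1At_1$; crucially, its direction depends only on the direction of $As_2$ and not on $|As_2|$. Requiring this bisector to coincide with $\rho_A$ therefore pins down the direction of $As_2$, and the symmetric requirement at $C$ pins down the direction of $Cs_2$. Defining $s_2$ as the intersection of the two resulting rays, one issued from $A$ and one from $C$, produces a point meeting both direction conditions at once; this is exactly the inscribed-angle bookkeeping of Propositions~\ref{prop:quadrilateral} and~\ref{prop:alg-one}, where a vertex acting as a circle centre forces an inscribed angle of the form $\pi-\vartheta$ and a quadrilateral angle-sum then yields the perpendicularity.

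The main obstacle is feasibility and nondegeneracy. I must verify that the two direction-constrained rays issued from $A$ and $C$ actually meet in a single point $s_2$ lying on the interior side of the lines $AB$ and $BC$, so that each reflection throws the corresponding $t_j$ onto the far side and the inequalities orient so as to give $F$ and not its complement, and that the four sites are pairwise distinct. This is precisely where the hypothesis that no two sides of $F$ are parallel is used: expressed through the interior angles of $F$ it guarantees that these two constructed rays are not parallel (so that they meet) and it excludes the strip-type degeneracies already covered by Proposition~\ref{prop:alg-two}. I expect to organise the angle conditions so that the required intersection point exists for every admissible quadrilateral. Should the inequalities come out reversed for a given $F$, the mirror assignment---in which the two rays share a point of $T\setminus S$ rather than of $S$---yields an entirely analogous construction.

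Finally I would confirm that $V_T(S)=F$ exactly, with $S=\{s_1,s_2\}$ and $T=\{s_1,s_2,t_1,t_2\}$. Unboundedness follows from Theorem~\ref{thm:boundedcell}, since the two non-parallel rays make $\cone\{t_i-s_j\}$ a proper subcone of $\R^2$; nonemptiness is immediate because $F\neq\emptyset$ by construction; and every one of the four inequalities of the representation~\eqref{eq:linrep} is essential, as each of the four bisectors supports $F$ along a genuine edge (two segments and two rays) by the circumcenter relations above. Hence $F$ is realised as the cell $V_T(S)$, as claimed.
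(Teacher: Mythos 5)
Your overall strategy coincides with the paper's: realise each of the four side-lines of $F$ as the perpendicular bisector of a pair $\{s_i,t_j\}$, with the same combinatorial assignment (one site of $S$ "owns" the two bounded sides, the other the two rays, and each site of $T\setminus S$ owns the two sides meeting at an end vertex), derive the forced reflection relations, and verify the remaining perpendicularities by the circle-centre/inscribed-angle argument. The difference is only in the seed of the construction: the paper first builds $t_1,t_2$ by drawing lines through the middle vertex and through the apex where the two ray-lines meet, at angles $\pi-\alpha$ and $\beta$ to the segment joining them (these four lines form a quadrilateral precisely because $\alpha>\beta$, so existence is immediate), and then obtains $s_1,s_2$ as reflections of $t_1$; you instead solve for $s_2$ as the intersection of two direction-constrained lines through $A$ and $C$ and obtain $t_1,t_2,s_1$ as its reflections. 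The forced relations you list (the circumcentre conditions, $t_1,t_2$ being reflections of $s_2$ in the bounded side-lines, $s_1$ its reflection in the diagonal $AC$, and the fact that the required direction of $As_2$ is determined modulo $\pi$ independently of $|As_2|$) are all correct.

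The genuine gap is exactly the step you flag and then defer: you never establish that the two loci through $A$ and $C$ actually meet, i.e.\ that the directions $\phi_{AB}+\phi_{AC}-\phi_{\rho_A}$ and $\phi_{CB}+\phi_{CA}-\phi_{\rho_C}$ are distinct modulo $\pi$. "No two sides of $F$ are parallel" is a hypothesis about the four side-lines, and it is not evident (and you give no argument) that it forbids this derived parallelism; without that, your construction has no point $s_2$ to start from, so the entire proof collapses at its first existential claim. (One cannot argue that a solution exists and must lie on both loci, since the existence of a solution is precisely what is being proved.) The same remark applies to the orientation and nondegeneracy checks -- that $s_2$ lands strictly on the $F$-side of the lines $AB$ and $BC$ so that each reflection is a genuine distinct site and each inequality of \eqref{eq:linrep} selects the halfplane containing $F$ rather than its complement -- which you dispose of with "I expect to organise the angle conditions" and "the mirror assignment yields an analogous construction". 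To close the gap you should either prove the non-parallelism of the two loci directly from the convexity and non-parallelism of $F$, or switch the seed as the paper does: construct $t_1,t_2$ first by the explicit angle conditions at the middle vertex and at the apex of the two ray-lines (whose existence needs only $\pi>\alpha>\beta>0$), and reduce the verification to the two perpendicularity statements handled by the inscribed-angle computation.
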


\begin{proof}
First, looking at Fig.~\ref{fig:quadrilateral-general-unbounded}, where $F$
is depicted as the shaded region, we obtain the point $B$ as the
intersection of the unbounded sides of the convex quadrilateral with
vertices $CAD$. We denote $\sphericalangle CAD=\alpha $ and $\sphericalangle
CBD=\beta $ and observe that $\pi >\alpha >\beta >0.$ We next draw the lines
that make angles $\pi -\alpha $ with $AB$ at the point $A$ and those that
make angles $\beta $ with the same segment at $B.$ We then get the points $%
t_{1}$ and $t_{2}$ as the two other vertices of the quadrilateral determined
by these four lines. We take $s_{1}$ and $s_{2}$ as the symmetric points of $%
t_{1}$ with respect to $CA$ and $CB,$ respectively. To see that $V_{T}(S)=F$
for $S:=\left\{ s_{1},s_{2}\right\} $ and $T:=\left\{
s_{1},s_{2},t_{1},t_{2}\right\} ,$ the only thing that we have to prove is
that $t_{2}s_{1}\bot AD$ and $t_{2}s_{2}\bot BD.$ The p\`{o}int $B$ is the
center of the circle passing through the points $t_{1},$ $t_{2}\ $and$%
\ s_{2}.$ Therefore, $\sphericalangle t_{1}s_{2}t_{2}=\pi -\beta ,$ which,
looking at the quadrilateral $C_{2}s_{2}D_{2}B,$ implies that $%
t_{2}s_{2}\bot BD.$ The point $A$ is the center of the circle passing
through the points $t_{1},$ $t_{2}$ and$\ s_{1}.$ Therefore, $%
\sphericalangle t_{1}s_{1}t_{2}=\pi -\alpha ,$ which, looking at the
quadrilateral $C_{1}s_{1}D_{1}A,$ implies $t_{2}s_{1}\bot AD.$
\end{proof}

\section{Conclusions}

We have obtained constructive characterizations of properties pertaining to
higher-order Voronoi cells, and applied these characterizations to a case
study of cells of order at least two in the system of four sites.

In particular, the results obtained in the preceding subsections show that for
{$S\subset T\subset\R^{2}$}, with $|S|=2$ and $|T|=4$, the resulting
cell $V_{T}(S)$ can be any of the following sets: the empty set, a singleton,
a non-cyclic bounded convex quadrilateral, a halfplane, an intersection of
parallel halfplanes with opposite normals and a nonempty interior, an angle,
an unbounded polygon with non-parallel sides and just two vertices, or an
unbounded convex quadrilateral. All the remaining possibilities for sets in
{$\R^{2}$ }de ned by four linear inequalities, namely, a singleton, a
one-dimensional set, a triangle, a cyclic quadrilateral, and an unbounded
quadrilateral with parallel sides and just two vertices are unfeasible.

The restrictions on the possible shapes of higher-order cells discovered in
our case study are consistent with the challenges encountered in the design of
numerical methods for constructing higher-order cells (cf. \cite{Sweepline}), where
the key assumption of general position ensures the absence of cyclic
quadrilateral configurations). Thus a natural direction for the future study
is to obtain general structural results on the shapes of higher-order cells,
which will in turn inform the design of algorithms. For instance, we would
like to know whether the convex hull of affinely independent points can be
represented as a higher-order Voronoi cell (generalizing our result on the
impossibility of a triangular cell), what higher-dimensional configurations
produce cells with empty interiors, and what are the possible dimensions of
higher-order Voronoi cells in $\R^{n}$.

\section{Acknowledgments}

We are grateful to the two JOTA referees for their thoughtful and thorough corrections and suggestions, including the proof of Proposition~\ref{prop:impunbthree}. These corrections have greatly improved the quality of our paper. 

The first author was supported by the MINECO of Spain, Grant
MTM2014-59179-C2-2-P, and the Severo Ochoa Programme for Centres of Excellence
in R\&D [SEV-2015-0563]. He is affiliated to MOVE (Markets, Organizations and
Votes in Economics). He thanks The School of Mathematics and Statistics of
UNSW Sydney for sponsoring a visit to Sidney to complete this work.

The second author is grateful to the Australian Research Council for
continuous financial support via grants DE150100240 and DP180100602, which in
particular sponsored a trip to Barcelona that initiated this collaboration.

The third author was partially supported by MINECO of Spain and ERDF of EU,
Grant MTM2014-59179-C2-1-P, and Sistema Nacional de Investigadores, Mexico.


\end{document}